\newtheorem{theorem}{Theorem}[section]
\newtheorem{proposition}[theorem]{Proposition}
\newtheorem{lemma}[theorem]{Lemma}
\newtheorem{corollary}[theorem]{Corollary}
\newtheorem{rem}[theorem]{Remark}
\def\cC{\mathcal C}
\def\cM{\mathcal M}
\def\cX{\mathcal X}
\def\cY{\mathcal Y}
\def\cZ{\mathcal Z}
\def\Aut{\mbox{\rm Aut}}
\def\K{\mathbb{K}}
\def\ord{\mbox{\rm ord}}
\def\deg{\mbox{\rm deg}}
\def\Aut{\mbox{\rm Aut}}
\def\Alt{\mbox{\rm Alt}}
\def\Sym{\mbox{\rm Sym}}
\def\gg{\mathfrak{g}}
\newcommand{\aut}{\mbox{\rm Aut}}
\newcommand{\ha}{{\textstyle\frac{1}{2}}}
\title{Large $p$-groups of automorphisms of algebraic curves in characteristic $p$}
\date{}
\author{Massimo Giulietti and G\'abor Korchm\'aros}
\begin{document}
\maketitle


    \begin{abstract}
Let $S$ be a $p$-subgroup of the $\K$-automorphism group $\aut(\cX)$ of an algebraic curve $\cX$ of genus $\gg\ge 2$ and $p$-rank $\gamma$ defined over an algebraically closed field $\mathbb{K}$ of characteristic $p\geq 3$. Nakajima \cite{nakajima1987} proved that if $\gamma \ge 2$ then $|S|\leq \textstyle\frac{p}{p-2}(\gg-1)$. If equality holds, $\cX$ is a {\emph{Nakajima extremal curve}}.
We prove that if $$|S|>\textstyle\frac{p^2}{p^2-p-1}(\gg-1)$$ then one of the following cases occurs.
 \begin{itemize}
\item[{\rm{(i)}}] $\gamma=0$ and the extension $\K(\cX)|\K(\cX)^S$  completely ramifies at a unique place, and does not ramify elsewhere.
\item[{\rm{(ii)}}] $|S|=p$, and $\cX$ is an ordinary curve of genus $\gg=p-1$.
\item[{\rm{(iii)}}] $\cX$ is an ordinary, Nakajima extremal curve,
and
$\K(\cX)$ is an unramified Galois extension of a function field of
a curve given in {\rm{(ii)}}. There are exactly $p-1$ such Galois extensions. Moreover, if some of them is an abelian extension then $S$ has maximal nilpotency class.
\end{itemize}
The full $\mathbb{K}$-automorphism group of any Nakajima extremal curve is determined, and several infinite families of Nakajima extremal curves are constructed
by using their pro-$p$ fundamental groups.
\end{abstract}

    \section{Introduction}
In the present paper, $\K$ is an algebraically closed field of
characteristic $p\geq 3$,  $\cX$ is {a} (projective, non-singular,
geometrically irreducible, algebraic) curve of genus $\gg(\cX)\geq
2$, $\mathbb K(\cX)$ is the function field
of $\cX$,
and $\aut(\cX)$ is the $\K$-automorphism group of $\cX$, and $S$
is a (non-trivial) subgroup of $\aut(\cX)$ whose order is a power of $p.$

The earliest results on the maximum size of $S$ date back to the 1970s
and have played an important role in the study of curves with large automorphism groups exceeding the classical Hurwitz bound $84(\gg(\cX)-1)$. Stichtenoth proved that if $S$ fixes a place $\mathcal P$ of $\mathbb K(\cX)$ then
\begin{equation}
\label{eq117feb2013}
|S|\leq \textstyle\frac{p}{p-1}\,\gg(\cX)
\end{equation}
unless the extension $\mathbb K(\cX)|\mathbb K(\cX)^S$ completely ramifies at $\mathcal P$, and does not ramify elsewhere; in geometric terms, $S$ fixes a point $P$ of $\cX$ and acts on $\cX\setminus\{P\}$ as a semiregular permutation group; see \cite{stichtenoth1973I} and also \cite[Theorem 11.78]{hirschfeld-korchmaros-torres2008}.
In the latter case, the Stichtenoth bound is
\begin{equation}
\label{sti16feb2013}
|S|\leq \textstyle\frac{4p}{p-1}\,\gg(\cX)^2.
\end{equation}
In his paper \cite{nakajima1987} Nakajima pointed out that the maximum size of $S$ is also related to the Hasse-Witt invariant $\gamma(\cX)$ of $\cX$. It is known
that $\gamma(\cX)$ coincides with the $p$-rank  of $\cX$ defined to be the rank of the (elementary abelian) group of the $p$-torsion points in the Jacobian variety of $\cX$; moreover,   $\gamma(\cX)\leq {\gg}(\cX)$ and when equality holds then $\cX$ is called an {\em ordinary} (or {\em general}) curve; see \cite[Section 6.7]{hirschfeld-korchmaros-torres2008}.
If $S$ fixes a point and (\ref{eq117feb2013}) fails then $\gamma(\cX)=0$; conversely, if $\gamma(\cX)=0$, then $S$ fixes a point, see  \cite[Lemma 11.129]{hirschfeld-korchmaros-torres2008}.
For $\gamma(\cX)>0$, Nakajima proved that $|S|$ divides $\gg(\cX)-1$ when $\gamma(\cX)=1$, and
$|S|\le p/(p-2) (\gamma(\cX)-1)$ otherwise; see \cite{nakajima1987} and also \cite[Theorem 11.84]{hirschfeld-korchmaros-torres2008}. Therefore,
the Nakajima bound \cite[Theorem 1]{nakajima1987} is
\begin{equation}
\label{naka16feb2013}
|S|\leq \left\{
\begin{array}{lll}
\textstyle\frac{p}{p-2}\,(\gg(\cX)-1)\quad {\mbox{for}}\quad \gamma(\cX)\ge 2,\\
\quad\quad\,\, \gg(\cX)-1\quad\,\, {\mbox{for}}\quad \gamma(\cX)=1.
\end{array}
\right.
\end{equation}
A \emph{Nakajima extremal curve} is a curve $\cX$  with $p$-rank $\gamma(\cX)\ge 2$ which attains the bound ({\ref{naka16feb2013}).

In this context, a major  issue is to determine the possibilities for $\cX$, $\gg$ and $S$ when either $|S|$ is close to the Stichtenoth bound (\ref{sti16feb2013}), or $|S|$ is close to the Nakajima bound (\ref{naka16feb2013}).

Lehr and Matignon \cite{lehr-matignon2005} investigated the case where $S$ fixes a point and were able to determine all curves $\cX$ with
\begin{equation}
\label{lehrm} |S|>\textstyle\frac{4}{(p-1)^2}\,\gg(\cX)^2,
\end{equation}
proving that (\ref{lehrm})
only occurs when the curve is birationally equivalent over $\K$ to
an Artin-Schreier curve of equation $Y^q-Y=f(X)$ such that
$f(X)=XS(X)+cX$ where $S(X)$ is an additive polynomial of $\K[X]$.
Later on, Matignon and Rocher \cite{matignon-rocher2008} showed
that the action of a $p$-subgroup of $\K$-automorphisms
$S$ satisfying
$$|S|>\textstyle\frac{4}{(p^2-1)^2}\,\gg(\cX)^2,$$ corresponds to the
\'etale cover of the affine line with Galois group
$S\cong(\mathbb{Z}/p \mathbb{Z})^n$ for $n\leq 3$. These results
have been refined by Rocher, see \cite{rocher1} and
\cite{rocher2}. The essential tools used in the above
mentioned  papers are ramification theory and some structure
theorems about finite $p$-groups.

Curves close to the Nakajima bound, and in particular Nakajima extremal curves, are investigated in this paper. Our main results are stated in the following theorems.
\begin{theorem}
\label{princ}
Let $S$ be a $p$-subgroup of the $\K$-automorphism group $\aut(\cX)$ of an algebraic curve $\cX$ of genus $\gg(\cX)\geq 2$ defined over an algebraically closed field $\mathbb{K}$ of characteristic $p\geq 3$. If
\begin{equation}
\label{hyp}
|S|>\textstyle\frac{p^2}{p^2-p-1}(\gg(\cX)-1)
\end{equation}
 then one of the following cases occurs:
 \begin{itemize}
\item[{\rm{(i)}}] $\gamma=0$ and the extension $\K(\cX)|\K(\cX)^S$  completely ramifies at a unique place, and does not ramify elsewhere.
\item[{\rm{(ii)}}] $|S|=p$, and $\cX$ is an ordinary curve of genus $\gg=p-1$.
\item[{\rm{(iii)}}] $\cX$ is an ordinary  Nakajima extremal curve, and $\K(\cX)$ is an unramified Galois extension of a function field of
a curve given in {\rm{(ii)}}. There are exactly $p-1$ such Galois extensions.
\end{itemize}
\end{theorem}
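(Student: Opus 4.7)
The plan is to apply the Deuring-Shafarevich formula and the Riemann-Hurwitz formula to the Galois cover $\cX\to\cY:=\cX/S$. Let $\bar\gg,\bar\gamma$ denote the genus and $p$-rank of $\cY$, let $r$ be the number of branch places of $\cY$, and let $e_1,\dots,e_r$ be the corresponding ramification indices (each a $p$-power at least $p$); set $t:=\sum_{i=1}^{r}(1-1/e_i)$. Deuring-Shafarevich reads $\gamma(\cX)-1=|S|(\bar\gamma-1+t)$, while Riemann-Hurwitz combined with the wild-ramification bound $d_i\geq 2(e_i-1)$ yields $\gg(\cX)-1\geq |S|(\bar\gg-1+t)$. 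The case $\gamma(\cX)=1$ is immediate: the Nakajima bound $|S|\leq \gg(\cX)-1$ contradicts \eqref{hyp}, since $p^2/(p^2-p-1)>1$. The case $\gamma(\cX)=0$ reduces to case~(i) by Deuring-Shafarevich alone: the identity $\bar\gamma+t=1-1/|S|<1$ together with $\bar\gamma\geq 0$, $t\geq 0$ and $e_i\geq p$ forces $\bar\gamma=0$, $r=1$ and $e_1=|S|$, i.e.\ total ramification at a unique place.

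For the main case $\gamma(\cX)\geq 2$, a systematic comparison of fractions with \eqref{hyp} rules out, one by one, the possibilities $\bar\gamma\geq 1$ (which would give $|S|\leq \tfrac{p}{p-1}(\gg(\cX)-1)$), $\bar\gamma=0$ with $r=1$ (which would give $\gamma-1<0$), $\bar\gamma=0$ with $r\geq 3$ (which would give $|S|\leq\tfrac{p}{2p-3}(\gg(\cX)-1)$), and $\bar\gamma=0$, $r=2$ with $\max(e_1,e_2)\geq p^2$ (which would give $|S|\leq\tfrac{p^2}{p^2-p-1}(\gg(\cX)-1)$, violating the strict inequality). Thus $\bar\gamma=0$, $r=2$, $e_1=e_2=p$, and the same technique applied to the Riemann-Hurwitz inequality eliminates $\bar\gg\geq 1$, so $\cY\cong\mathbb{P}^1$. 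As the inertia at each branch place is cyclic of order $p$, Hilbert's different formula combined with Hasse's integrality theorem for cyclic wild ramification yields $d_i=(m_i+1)(p-1)$ for positive integers $m_i$ coprime to $p$. Direct computation then gives
\[
\gamma(\cX)-1=\tfrac{p-2}{p}|S|,\qquad \gg(\cX)-\gamma(\cX)=\tfrac{p-1}{2p}|S|(m_1+m_2-2);
\]
substituting these into \eqref{hyp} and clearing denominators reduces the hypothesis to $p(m_1+m_2-2)<2$. The integrality of the jumps $m_i$ is the delicate point here: without it the inequality would not pin the $m_i$ down, but with it we are forced to $m_1=m_2=1$, so $d_1=d_2=2(p-1)$, $\cX$ is ordinary, and $|S|=\tfrac{p}{p-2}(\gg(\cX)-1)$ attains the Nakajima bound.

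It remains to separate cases (ii) and (iii) and to justify the count exactly $p-1$, which is the main structural obstacle. If $|S|=p$, then $\gg(\cX)=p-1$, giving case~(ii). Assume $|S|>p$ and let $T_1,T_2\leq S$ be inertia subgroups of order $p$ at two fixed places of $\cX$ above the two branch places of $\cY$. Since $\mathbb{P}^1$ admits no nontrivial \'etale cover, $S$ is the normal closure of $T_1\cup T_2$; passing to the Frattini quotient $V:=S/\Phi(S)$, the images $\bar T_1,\bar T_2$ generate $V$ as an $\mathbb{F}_p$-vector space, so $\dim_{\mathbb{F}_p}V\leq 2$. The possibility $\dim V=1$ would force $S$ cyclic of order $\geq p^2$, in which case both $T_i$ would coincide with the unique order-$p$ subgroup, which lies in $\Phi(S)$, forcing $\bar T_i=0$ and contradicting generation. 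Hence $\dim V=2$ and $\bar T_1,\bar T_2$ form a basis of $V$. Among the $p+1$ maximal subgroups of $S$ (in bijection with the hyperplanes of $V$) exactly $p-1$ avoid both $\bar T_1$ and $\bar T_2$; each such maximal subgroup $T$ is normal in $S$ with $T\cap T_i^{g}=1$ for every $g\in S$ and every $i=1,2$, and therefore acts freely on $\cX$. Riemann-Hurwitz then gives $\gg(\cX/T)=p-1$, and the action of $S/T\cong\mathbb{Z}/p\mathbb{Z}$ on $\cX/T$ realises a curve as in (ii). Thus $\K(\cX)/\K(\cX/T)$ is an unramified Galois extension of the required form, and there are exactly $p-1$ of them.
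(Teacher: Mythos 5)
Your proposal is correct and its skeleton is the paper's: Deuring--Shafarevich forces $\bar{\gamma}=0$ and two short orbits of length $|S|/p$, Riemann--Hurwitz forces $\cX$ ordinary and hence Nakajima extremal, and a count of maximal subgroups of $S$ produces the $p-1$ unramified extensions. Three of your sub-arguments, however, genuinely diverge, all in the direction of streamlining. First, you absorb both the case $\gamma=0$ and the case where $S$ fixes a point into the single ramification analysis (total ramification with $|S|\geq p^2$ would force some $e_i\geq p^2$, which your fraction comparison excludes), whereas the paper treats these separately via \cite[Lemma 11.129]{hirschfeld-korchmaros-torres2008} and a standalone lemma resting on Stichtenoth's bound \eqref{eq117feb2013}. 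Second, to prove ordinarity the paper bounds the higher ramification groups directly (the $k_1+k_2\geq 1$ contradiction in the proof of Proposition \ref{propap=3}); your use of the Artin--Schreier jumps $m_i$ is the same computation in different clothing ($m_i=k_i+1$), though your clean reduction of \eqref{hyp} to $p(m_1+m_2-2)<2$ makes the role of integrality transparent. Third, and most substantially, you obtain $\dim S/\Phi(S)=2$ in one stroke: $S$ is the normal closure of the two inertia subgroups because $\mathbb{P}^1$ admits no nontrivial \'etale cover, and the cyclic case dies because the unique order-$p$ subgroup would lie in $\Phi(S)$. The paper reaches the same conclusion only through the chain of Propositions \ref{propbp=3}--\ref{30ag2013} on abelian subgroups, semiregular normal subgroups and quotient curves. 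Your route is shorter but buys less: that intermediate machinery (semiregularity of small normal subgroups, $\Phi(S)=S'$, the structure of the $M_i$) is reused later for Theorems \ref{princ1} and \ref{fullaut}, so it is not wasted in the paper. Both arguments establish ``exactly $p-1$'' at the same level of precision, namely by showing that precisely $p-1$ of the $p+1$ maximal subgroups of $S$ act freely while the two containing inertia do not.
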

\begin{theorem}
\label{princ1} In case {\rm{(iii)}}, $S$ is generated by two elements and if one of the $p-1$ Galois extensions is abelian, then $S$ has maximal nilpotency class. If there are more than one such abelian extensions, then $\gg=p^2(p-2)+1$, $|S|=p^3$ and $S\cong UT(3,p)$ where $UT(3,p)$ is the group of all upper-triangular unipotent $3\times 3$ matrices over the field with $p$ elements.
\end{theorem}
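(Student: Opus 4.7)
My plan is to work within the setup of Theorem~\ref{princ}(iii): let $T_1,\dots,T_{p-1}\trianglelefteq S$ be the Galois groups of the $p-1$ unramified Galois extensions of the intermediate curves $\cY_i=\cX/T_i$ of type (ii). Each $T_i$ is automatically a normal subgroup of index $p$ in $S$, since subgroups of $p$-power index in a $p$-group are normal. For the 2-generation of $S$, each $T_i$ contains the Frattini subgroup $\Phi(S)=S^{p}[S,S]$, so the $T_i$ determine $p-1$ distinct hyperplanes in the elementary abelian quotient $S/\Phi(S)\cong\mathbb{F}_{p}^{d}$, where $d$ is the minimal number of generators of $S$. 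Since $(p^{d}-1)/(p-1)\geq p-1$ we immediately get $d\geq 2$; to conclude $d=2$, I would argue that any maximal subgroup of $S$ not among the $T_i$ corresponds either to a ramified cover of $\cX$ or to an unramified cover whose quotient is not of type (ii), and the Deuring--Shafarevich formula combined with the inertia analysis inherited from the proof of Theorem~\ref{princ}(iii) should bound these ``extra'' maximal subgroups by $2$, giving $(p^{d}-1)/(p-1)=p+1$ and hence $d=2$.

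For the second assertion, assume some $T_i$ (say $T_1$) is abelian. The centraliser $C_{S}(T_1)$ contains $T_1$, so $C_{S}(T_1)\in\{T_1,S\}$. If $C_{S}(T_1)=S$, then $T_1\subseteq Z(S)$, forcing $S$ to be abelian; a $2$-generated abelian $p$-group has nilpotency class $\leq 1$, so maximal class in that case means $|S|\leq p^{2}$. If instead $C_{S}(T_1)=T_1$, then $S/T_1\cong\mathbb{Z}/p$ acts faithfully on the abelian $p$-group $T_1$ by conjugation; writing $\sigma$ for a generator of this action, a direct computation gives $\gamma_{k+1}(S)=(\sigma-1)^{k}(T_1)$ for $k\geq1$. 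The Nakajima extremal condition on $\cX$, combined with the fact that $T_1$ is a quotient of the pro-$p$ \'etale fundamental group of the ordinary curve $\cY_1$ of $p$-rank $p-1$, will force each quotient $\gamma_{k}(S)/\gamma_{k+1}(S)$ to have order $p$, i.e.\ maximal nilpotency class. (Alternatively, one can invoke a classical $p$-group result characterising maximal class via a self-centralising abelian normal subgroup of index $p$.)

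For the third assertion, suppose $T_1\neq T_2$ are both abelian. As distinct maximal subgroups of $S$ we have $T_1T_2=S$, so $T_1\cap T_2$ is centralised by all of $S$ and lies in $Z(S)$, giving $|S/Z(S)|\leq p^{2}$. Combining this with the maximal class obtained in the previous paragraph (so $|S/Z(S)|=p^{n-1}$ for $|S|=p^{n}$) forces $n\leq 3$. The cases $n\leq 2$ are excluded by a direct Deuring--Shafarevich analysis: $n=1$ coincides with case (ii), while $n=2$ (where $S\cong(\mathbb{Z}/p)^{2}$) is incompatible with the combined conditions of Nakajima extremality and the existence of exactly $p-1$ unramified covers to curves of type (ii). Hence $|S|=p^{3}$ and $\gg=(p-2)p^{2}+1$. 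Of the two non-abelian 2-generated $p$-groups of order $p^{3}$, namely $UT(3,p)$ (exponent $p$) and the modular group $M_{p^{3}}$ (exponent $p^{2}$), the latter is ruled out because its unique cyclic maximal subgroup $\mathbb{Z}/p^{2}$ cannot appear as the Galois group of an unramified cover in our setting; the cleanest verification comes from examining the possible $\mathbb{Z}/p^{2}$-quotients of the pro-$p$ fundamental group of $\cY_1$ and comparing with the Nakajima extremal structure on $\cX$.

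The main obstacles I anticipate are two. First, making the $d=2$ conclusion rigorous requires a tight interaction between the Deuring--Shafarevich ramification data on $\cX$ and the $p$-group counting of maximal subgroups---the bound of ``at most two extra maximal subgroups'' is the delicate point. Second, eliminating $M_{p^{3}}$ in the last step requires a concrete incompatibility between a cyclic $\mathbb{Z}/p^{2}$-Galois unramified cover of the genus-$(p-1)$ base $\cY_1$ and the ordinary Nakajima extremal structure on $\cX$. The pro-$p$ fundamental group analysis mentioned in the paper's abstract is the natural tool for both.
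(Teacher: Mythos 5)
Your skeleton is reasonable, but the central step---maximal nilpotency class from one abelian $T_i$---has a genuine gap. Neither of your two routes works as stated. The ``classical result'' that a self-centralising abelian normal subgroup of index $p$ forces maximal class is false: the group $G=\langle a,b\mid a^{p^3}=b^p=1,\ b^{-1}ab=a^{1+p^2}\rangle$ of order $p^4$ is $2$-generated, has the self-centralising abelian maximal subgroup $\langle a\rangle$, and has class $2$, not $3$. Your alternative route correctly identifies $\gamma_{k+1}(S)=(\sigma-1)^k(T_1)$, but then merely asserts that the Nakajima condition ``will force'' each quotient to have order $p$. The missing ingredient, which your proposal never establishes, is that $[S:S']=p^2$ \emph{exactly} (not just $[S:\Phi(S)]=p^2$, which is weaker, as the counterexample above shows with $[G:G']=p^3$). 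Once $|T_1/(\sigma-1)T_1|=[S:S']/p=p$ is known, the standard argument that $|(\sigma-1)^kT_1/(\sigma-1)^{k+1}T_1|$ is non-increasing gives maximal class; this is exactly the paper's route (Proposition \ref{a4set2013}, citing Xu--An--Zhang). Proving $S'=\Phi(S)$ requires the curve-theoretic input of Propositions \ref{propcp=3}, \ref{a30ag2013} and \ref{propdp=3}: any normal $N$ with $S/N$ abelian has index at most $p^2$, because otherwise $\cX/N$ would again be a Nakajima extremal curve with abelian $p$-automorphism group of order $\ge p^3$, contradicting Proposition \ref{propbp=3}. Nothing in your write-up replaces this.

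The remaining parts have smaller but real defects. For $d(S)=2$, you acknowledge that the bound ``at most two non-semiregular maximal subgroups'' is unproven; the clean way to get it is that the subgroup generated by all point stabilizers of one short orbit is normal of index exactly $p$ (Proposition \ref{propcp=3}), and every maximal subgroup meeting a stabilizer must contain it by normality---note also that quoting ``exactly $p-1$ Galois extensions'' from Theorem \ref{princ}(iii) is close to circular, since the paper derives that count from the $2$-generation you are trying to prove. In the final step, your elimination of the modular group $M_{p^3}$ rests on a false premise: $M_{p^3}$ has $p$ cyclic maximal subgroups, not a unique one, and cyclic unramified $C_{p^2}$-covers of an ordinary curve of $p$-rank $p-1\ge 2$ do exist by Proposition \ref{shafart}. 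The correct obstruction (Propositions \ref{a31ago2013} and \ref{31ago2013}) is that $M_1$ and $M_2$ are two distinct non-cyclic maximal subgroups, whereas $M_{p^3}$ has only one non-cyclic maximal subgroup. Finally, your exclusion of $n=2$ by ``incompatibility'' is not substantiated---the Artin--Mumford curve realises $n=2$ with $p-1\ge 2$ abelian extensions, and the paper's own Proposition \ref{proA18feb2014} accordingly concludes $|S|=p^2$ or $p^3$---so that case must be handled by fiat or by separate classification, not by the argument you sketch.
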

\begin{theorem}
\label{fullaut} Let $\cX$ be an Nakajima extremal curve, and $S$ a Sylow $p$-subgroup of $\aut(\cX)$. Then either $S$ is a normal subgroup of $\aut(\cX)$ and $\aut(\cX)$ is the semidirect product of $S$ by a subgroup of a dihedral group of order $2(p-1)$, or $p=3$ and, for some subgroup $M$ of $S$ of index $3$, $M$ is a normal subgroup of $\aut(\cX)$ and $\aut(\cX)/M$ is isomorphic to a subgroup of $GL(2,3)$.
\end{theorem}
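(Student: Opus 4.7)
The strategy has three main components: (i) determine the ramification geometry of $\cX \to \cX/S$; (ii) embed $N_G(S)/S$ into a dihedral group of order $2(p-1)$; and (iii) show $S$ is normal in $G=\aut(\cX)$ except in the exceptional $p=3$ situation.

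For (i), Theorem~\ref{princ}(iii) asserts that $\cX$ is ordinary and is an unramified Galois cover of a case-(ii) curve $\cY$, itself ordinary of genus $p-1$ with an order-$p$ subgroup $T \le \aut(\cY)$. Applying the Deuring--Shafarevich formula to the $p$-cover $\cY \to \cY/T$, together with Riemann--Hurwitz and the ordinariness of $\cY$, I would conclude that $\cY/T \cong \P^1$, that this cover has exactly two branch points, and that the local Artin--Schreier conductor at each is $1$. Pulling back along the unramified cover $\cX \to \cY$ (using that the corresponding subgroup $H$ of $\aut(\cX)$ is normal in $S$, which follows since $S$ permutes the $p-1$ case-(iii) extensions of Theorem~\ref{princ} and this set has prime-to-$p$ size), I obtain $\cX/S \cong \P^1$ with exactly two branch points $P_1, P_2$, ramification index $p$ over each, and local conductor $1$.

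For (ii), $S$ is normal in $N_G(S)$ and acts trivially on $\cX/S$; a standard rigidity argument (no non-trivial curve automorphism fixes a dense set of points) shows the kernel of $N_G(S) \to \aut(\cX/S)$ is exactly $S$, so $N_G(S)/S \hookrightarrow \aut(\cX/S) \cong \PGL(2,\K)$. The image preserves the unordered pair $\{P_1,P_2\}$, so it lies in its (generalized) dihedral stabilizer in $\PGL(2,\K)$. Since $|N_G(S)/S|$ is coprime to $p$, the cyclic part fixing each $P_i$ embeds into $\K^*$; its order is bounded by the tame inertia at a wildly ramified place of $\cX$ above $P_i$, which by the Artin--Schreier conductor $1$ divides $p-1$. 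Thus $N_G(S)/S$ embeds into a dihedral group of order $2(p-1)$.

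The main obstacle is (iii), the normality of $S$. The plan is to suppose a conjugate $S' = gSg^{-1} \ne S$ exists and derive a contradiction for $p \ge 5$: both $S$ and $S'$ inherit the rigid geometry of (i), so each determines its own branch pair, and the structure of $S \cap S'$ and of $\langle S, S' \rangle$ inside $\aut(\cX)$ is severely constrained by having to be simultaneously compatible with both pieces of geometry; combined with the Sylow congruence $[G:N_G(S)] \equiv 1 \pmod p$, this leaves no room when $p \ge 5$. For $p=3$ the numerics admit a genuine exception; here I would locate a characteristic subgroup $M \le S$ of index $3$ (for instance, the intersection of the kernels of the $S$-actions on the two wildly ramified fibres) that is normal in $G$, and observe that the induced faithful action of $G/M$ on $S/M \cong \F_3^2$ produces the embedding $G/M \hookrightarrow GL(2,3)$ claimed in the theorem.
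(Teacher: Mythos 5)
Your steps (i) and (ii) are sound and run parallel to what the paper actually does: the two-branch-point, conductor-one geometry is Proposition \ref{propap=3} (two short orbits of length $|S|/p$, ordinariness forcing $S_P^{(2)}$ trivial), and the embedding of $N_{\aut(\cX)}(S)/S$ into a dihedral group of order $2(p-1)$ via the rational quotient and the tame inertia bound is essentially the content of Proposition \ref{pro27luglio2014} and Lemma \ref{le12agostoA}. The genuine gap is your step (iii), which is the entire substance of the theorem and for which you offer no argument, only the assertion that the constraints ``leave no room when $p\ge 5$.'' The difficulty is not local: to prove $G=N_G(S)$ you must control the global structure of $G$, and in particular rule out $G$ containing a non-abelian simple normal subgroup such as $PSL(2,q)$, which has many conjugate Sylow $p$-subgroups each carrying its own ``rigid geometry'' without any immediate contradiction, and for which the Sylow congruence $[G:N_G(S)]\equiv 1 \pmod p$ gives nothing. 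The paper's route is: first show every $2$-subgroup of $\aut(\cX)$ has a cyclic subgroup of index $2$ (Lemma \ref{lem6agos}, a parity argument with the Hurwitz formula), so Sylow $2$-subgroups of simple subgroups are dihedral or semidihedral; then invoke the Gorenstein--Walter and Alperin--Brauer--Gorenstein theorems to reduce a minimal normal subgroup to $PSL(2,q)$, $PSL(3,q)$, $PSU(3,q)$, ${\rm Alt}_7$ or $M_{11}$; and then eliminate each of these by ad hoc arithmetic (chiefly Lemma \ref{lemagosto10}: a $d$-subgroup of order $d^u$ and exponent $d^e$ forces $d^{u-e}\mid p-2$), before running an induction on $|S|$ through minimal normal subgroups and their quotient curves. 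None of this is recoverable from a pairwise comparison of two conjugates of $S$.

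A second, smaller but concrete error: your proposed index-$3$ subgroup $M$ in the $p=3$ case is misidentified. By the last claim of Proposition \ref{propap=3} the kernel of the $S$-action on each wildly ramified fibre is trivial, and the intersection of the two maximal subgroups $M_1,M_2$ containing the point stabilizers is $\Phi(S)$, of index $p^2$, not $p$. In the paper the exceptional $M$ is one of the \emph{semiregular} maximal subgroups $M_i$ ($3\le i\le p+1$); the exception arises because the quotient $\cX/M_i$ is a genus $p-1$ curve which, for $p=3$, can be the sporadic curve of Proposition \ref{igusa1} with automorphism group $GL(2,3)$, and this is traced through the induction rather than read off from the fibres of $\cX\to\cX/S$.
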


We also construct several infinite families of Nakajima extremal curves, and provide explicit equations, especially for $p=3$ and small genera.

 The analogous problem for $2$-groups of automorphisms $S$ makes sense in characteristic $p=2$ but the investigation gave rather different results, see \cite{gkp=2,gk2015}.

One may also ask how the above results may be refined when $\aut(\cX)$ is much larger than $S$. So far, this problem has been
investigated for zero $p$-rank curves $\cX$ such that $\aut(\cX)$ fixes no point of $\cX$; see  \cite{gktrans,gklondon,gur}.

The present paper is also related with the study of automorphism groups of curves in terms of quotients of fundamental groups, see \cite{bouw2001, pacheco1995,pachecostevenson2000}.

\section{Background and Preliminary Results}\label{sec2}
Let $\bar \cX$ be a non-singular model of $\K(\cX)^S$, that is,
a projective non-singular geometrically irreducible algebraic
curve with function field $\K(\cX)^S$, where $\K(\cX)^S$ consists of all elements of $\K(\cX)$
fixed by every element in $S$. Usually, $\bar \cX$ is called the
quotient curve of $\cX$ by $S$ and denoted by $\cX/S$. The
 field extension
$\K(\cX)|\K(\cX)^S$ is  Galois of degree $|S|$.

Let $\bar{P_1},\ldots,\bar{P_k}$ be the points of the quotient curve $\bar{\cX}=\cX/S$ where the cover $\cX\mapsto\bar{\cX}$ ramifies. For $1\leq i\leq k$, let $L_i$ denote the set of points of $\cX$ which lie {over} $\bar{P_i}$.
In other words, $L_1,\ldots,L_k$ are the short orbits of
$S$ on its faithful action on $\cX$. Here the orbit of $P\in \cX$
$$o(P)=\{Q\mid Q=P^g,\, g\in S\}$$ is {\em long} if $|o(P)|=|S|$, otherwise $o(P)$ is {\em short}. It may be that $S$ has no short orbits. This is the case if and only if every non-trivial element in $S$ is fixed--point-free on $\cX$. On the other hand, $S$ has a finite number of short orbits.

If $P$ is a point of $\cX$, the stabilizer $S_P$ of $P$ in $S$ is
the subgroup of $S$ consisting of all elements fixing $P$. For a
non-negative integer $i$, the $i$-th ramification group of $\cX$
at $P$ is denoted by $S_P^{(i)}$ (or $S_i(P)$ as in \cite[Chapter
IV]{serre1979})  and defined to be
$$S_P^{(i)}=\{g\mid \ord_P(g(t)-t)\geq i+1, g\in
S_P\}, $$ where $t$ is a uniformizing element (local parameter) at
$P$. Here $S_P^{(0)}=S_P^{(1)}=S_P$.

Let $\bar{\gg}$ be the genus of the quotient curve $\bar{\cX}=\cX/S$. The Hurwitz
genus formula  gives the following equation
    \begin{equation}
    \label{eq1}
2\gg-2=|S|(2\bar{\gg}-2)+\sum_{P\in \cX} d_P.
    \end{equation}
    where
\begin{equation}
\label{eq1bis}
d_P= \sum_{i\geq 0}(|S_P^{(i)}|-1).
\end{equation}

Let $\gamma$ be the $p$-rank of $\cX$, and let $\bar{\gamma}$ be the $p$-rank of the quotient curve $\bar{\cX}=\cX/S$.
The Deuring-Shafarevich formula, see \cite{sullivan1975} or \cite[Theorem 11,62]{hirschfeld-korchmaros-torres2008}, states that
\begin{equation}
    \label{eq2deuring}
\gamma-1={|S|}(\bar{\gamma}-1)+\sum_{i=1}^k (|S|-\ell_i)
    \end{equation}
where $\ell_1,\ldots,\ell_k$ are the sizes of the short orbits of $S$.
 If
 $S$ has no short orbits, that is, the Galois extension $\mathbb{K}(\cX)$ of $\mathbb{K}(\bar{\cX})$ is unramified, then
$S$ can be generated by $\bar{\gamma}$ elements by Shafarevich's theorem \cite[Theorem 2]{Shafarevich1954}, whereas the largest elementary abelian subgroup of $S$ has rank at most $\bar{\gamma}$
see  \cite[Section 4.7]{pries2011}.

The Artin-Mumford curve $\cM_c$ over a field $\mathbb{K}$ of characteristic $p>2$ is the curve birationally equivalent over $\mathbb{K}$ to the plane curve with affine equation
\begin{equation}
\label{artinmumford}
(x^p-x)(y^p-y)=c, \quad c\in \mathbb{K}^*.
\end{equation}
$\cM_c$ has genus $\textsf{g}=(p-1)^2$ and that its $\mathbb{K}$-automorphism group is isomorphic to
$(C_p\times C_p)\rtimes D_{p-1},$
where $C_p$ is a cyclic group of order $p$ and $D_{p-1}$ is a dihedral group of order $2(p-1)$; see see \cite{maddenevalentini1982}, and \cite[Theorem 11.93]{hirschfeld-korchmaros-torres2008}.

\begin{proposition}
\label{igusa1} Let $\cY$ be a curve of genus $p-1$ and positive $p$-rank such that $p$ divides $\aut(\cY)$. If  $G$ is a subgroup of $\aut(\cY)$ containing a subgroup $T$ of order $p$, then either
$T$ is a normal subgroup and $G=T\rtimes H$ with $H$ a subgroup of a dihedral group of order $2(p-1)$, or $p=3$ and $\cY$ is a non-singular model of the plane curve with affine equation
\begin{equation}
\label{eqago1}
Y^3-Y=-X+\frac{1}{X},
\end{equation}
and $\aut(\cY)\cong GL(2,3)$.
\end{proposition}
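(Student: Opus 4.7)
My plan is to analyze $\cY$ as a $T$-cover of $\cY/T$, extract an explicit Artin--Schreier model, and then split into two cases according to whether $T$ is normal in $G$.

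First, I apply the Deuring--Shafarevich formula~(\ref{eq2deuring}) to $T\le G$. Letting $\bar\gamma$ denote the $p$-rank of $\bar{\cY}=\cY/T$ and $k$ the number of $T$-fixed points, one obtains $\gamma(\cY)-1=p(\bar\gamma-1)+k(p-1)$. Using $1\le\gamma(\cY)\le\gg(\cY)=p-1$ together with the Hurwitz formula~(\ref{eq1}) to exclude $(\bar\gamma,k)=(1,0)$ (which would force $\bar{\gg}=2-2/p\notin\mathbb{Z}$ for $p\ge 3$), I reduce to $(\bar\gamma,k)=(0,2)$. Thus $\bar{\cY}\cong\mathbb{P}^1$ with exactly two ramification points and $\cY$ is ordinary; a second application of~(\ref{eq1}) forces each ramification break to be $1$. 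Placing the branch points at $x=0,\infty$, Artin--Schreier theory yields a model $y^p-y=\alpha x+\beta/x$ with $\alpha,\beta\in\K^*$; the substitution $x\mapsto\lambda x$ with $\lambda^2=\beta/\alpha$ normalizes it to $y^p-y=\alpha_0(x+1/x)$. Because $\gamma(\cY)=p-1\ge 2$, the Nakajima bound~(\ref{naka16feb2013}) forces every $p$-subgroup of $\aut(\cY)$ to have order at most $p$; hence $T$ is already a Sylow $p$-subgroup of $G$.

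If $T\trianglelefteq G$, then by Schur--Zassenhaus $G=T\rtimes H$ with $\gcd(|H|,p)=1$. The complement $H$ acts on $\bar{\cY}\cong\mathbb{P}^1$ preserving the branch locus $\{0,\infty\}$, so it embeds into the stabilizer of $\{0,\infty\}$ in $\PGL(2,\K)$, namely $\K^*\rtimes\langle x\mapsto 1/x\rangle$. Demanding that each such M\"obius map lift to an automorphism of $\cY$ via an Artin--Schreier transformation $y\mapsto\zeta y+h(x)$ with $\zeta\in\mathbb{F}_p^*$ and $h\in\K(x)$, and comparing polar parts on both sides of $y^p-y=\alpha x+\beta/x$, one finds that dilation coefficients $c$ must lie in $\mathbb{F}_p^*$ and that inversion-type lifts are likewise constrained. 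Consequently $H$ embeds into $\mathbb{F}_p^*\rtimes C_2\cong D_{p-1}$, as required.

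If $T$ is not normal, Sylow's theorem yields $n_p\ge p+1$ conjugate Sylow $p$-subgroups $T_1,\ldots,T_{n_p}$. Their pairs of fixed points are pairwise disjoint, since every point stabilizer on a curve has a unique Sylow $p$-subgroup (its first ramification group). This produces a single $G$-orbit $\Omega\subset\cY$ of size $2n_p$ with stabilizers of order $pm$ for some $m$ coprime to $p$. Plugging $|G|=2n_pmp$ into the Hurwitz formula~(\ref{eq1}) for $G$ acting on $\cY$, and using $\gg(\cY)=p-1$ together with the wild contribution $d_P=pm+p-2$ at each point of $\Omega$, yields constraints so tight that for $p\ge 5$ no admissible configuration exists, while for $p=3$ only $|G|=48$, $n_p=4$, $m=2$ survives. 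A direct analysis then normalizes the Artin--Schreier equation to $Y^3-Y=-X+1/X$ and exhibits $48$ automorphisms forming a copy of $GL(2,3)$. The main obstacle is this last case: one must carefully control every wild and tame contribution to the Hurwitz formula for $G$ acting on $\cY$ and eliminate all group-theoretic possibilities except $p=3$, $GL(2,3)$, after which the identification of the specific exceptional equation follows by explicit Artin--Schreier computation.
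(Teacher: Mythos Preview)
Your overall strategy is sound and differs genuinely from the paper's. After establishing (correctly) that $\cY/T\cong\mathbb{P}^1$ with exactly two totally ramified points of jump~$1$, the paper does not attempt any direct Hurwitz analysis of the non-normal case. Instead it invokes the Madan--Valentini classification theorem \cite{maddenevalentini1982} (see also \cite[Theorem~11.93]{hirschfeld-korchmaros-torres2008}): for a cyclic degree-$p$ extension of a rational function field, the Galois group $T$ is normal in the full automorphism group except in four explicitly listed families. Checking that three of the four exceptions are excluded by the hypotheses (two have $p$-rank zero, one has genus $(p-1)^2$) leaves only the $p=3$, $GL(2,3)$ case. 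In the normal case the paper simply observes that $H$ acts faithfully on $\mathbb{P}^1$ preserving a two-point set and is a $p'$-group, and then asserts $H$ sits inside $D_{p-1}$; your explicit Artin--Schreier lifting argument actually supplies more detail here than the paper does.

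However, your non-normal case is only a sketch, and the gaps are substantial. First, $\Omega$ need not be a single $G$-orbit: $G$ permutes the pairs $\{P_i,Q_i\}$ transitively, but $N_G(T)$ may or may not swap the two fixed points of $T$, so $\Omega$ could split into two orbits of size $n_p$. Second, the assertion that the Hurwitz formula ``yields constraints so tight that for $p\ge5$ no admissible configuration exists'' is not a proof: you must account for all additional tame ramification of $G$ on $\cY$ outside $\Omega$, determine the genus of $\cY/G$, and carry out the full casework on $(n_p,m,\bar g,e_1,\dots)$. That casework is precisely the content of the relevant piece of the Madan--Valentini theorem and is not short. Third, even after arriving at $p=3$, $|G|=48$, you have not explained why the modulus is forced to the specific value giving $Y^3-Y=-X+1/X$ rather than some other member of the family $Y^3-Y=aX+1/X$. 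One minor point in the normal case: inversion-type lifts $x\mapsto c/x$ need not have $c\in\mathbb{F}_p^*$, so the image of $H$ in $\PGL_2(\K)$ does not literally lie in $\mathbb{F}_p^*\rtimes C_2$; the conclusion is still correct because such elements square to the identity and invert the dilation subgroup, making $H$ abstractly dihedral on a cyclic group of order dividing $p-1$.
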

\begin{proof} Let $T$ be a subgroup of $\aut(\cY)$ of order $p$. The Hurwitz  genus formula applied to $T$ yields that the number $\lambda$ of fixed points of $T$ on $\cY$ is positive.
From the Deuring-Shafarevich formula applied to $T$,
$p-2\ge \gamma-1=p(\bar{\gamma}-1)+\lambda(p-1)$ whence $\bar{\gamma}=0$ and $\lambda=2$. Now, from the Hurwitz genus formula  applied to $T$, $2(p-2)\geq 2p(\bar{\gg}-1)+4(p-1)$ which yields  $\bar{\gg}=0$. Therefore, $T$ is a normal subgroup $G$ with four exceptions by a result of Madan and Valentini \cite{maddenevalentini1982}; see also \cite[Theorem 11.93]{hirschfeld-korchmaros-torres2008}. One exception occurs for $p=3$ when $\cY$ is a non-singular model of a plane curve $\cC$ of affine equation $X(X-1)(Y^3-Y)=\alpha$ with $\alpha^2=2$, equivalently (\ref{eqago1}), and $G$ is isomorphic to a subgroup of $GL(2,3)$. This shows that Proposition \ref{igusa1} holds in this case. Two of the other three exceptions have zero $p$-rank, while the fourth is the Artin-Mumford curve of genus $(p-1)^2$. Therefore, they cannot actually occur in our case.

We may assume that $T$ is a normal subgroup of $G$. By the Nakajima bound
\eqref{naka16feb2013} applied to $\cY$, $T$ is a Sylow $p$-subgroup of $\aut(\cY)$. Therefore,
$G=T\rtimes H$ with $H$ of order prime to $p$. Therefore, $H$ can be viewed as a subgroup of the rational curve fixing two points. Hence,
$H$ is a subgroup of a dihedral group of order $2(p-1)$.
\end{proof}
\begin{rem}
\label{rem1ago2014} {\em{Apart from the exceptional case $p=3$ and $a=-1$, a non-singular model of the plane curve $\cC_a$ with affine equation
\begin{equation}
\label{eqago11}
Y^p-Y=aX+\frac{1}{X}, \quad a\in \mathbb{K}^*
\end{equation}
is a general (hyperelliptic) curve of genus $p-1$ which provides an example for the curve $\cY$ in Proposition \ref{igusa1} with an elementary abelian group $H$ of order $4$, so that
$G=\langle h \rangle \times D_p$ where $h$ is the hyperelliptic involution and $D_p$ is the dihedral group of order $2p$. If $\mathbb{K}$ is the algebraic closure of the finite field $\mathbb{F}_p$, then $G=\aut(\cY)$ by a result due to van der Geer and der Vlugt \cite{vgvv}. As far as we know, no curve $\cY$ with a larger subgroup $H$ is available in the literature.}}
\end{rem}

\begin{rem}
\label{igusa1bis}
{\em{Let $p=3$. The plane curve $\cC_a$ in Remark \ref{rem1ago2014} has also an affine equation of type 
\begin{equation}
\label{eq2} Y^2=cX^6+X^4+X^2+1
\end{equation}
with some $c\in \mathbb{K}^*$, and provides a further plane model of the curve $\cY$ defined in Proposition \ref{igusa1},  see \cite[Section 8]{igu}, and \cite[Section 1]{car}; see also \cite[Lemma 1]{sb}, and \cite{carq}. In particular, $\aut(\cY)$ is a dihedral group of order $12$, apart from the exceptional case (\ref{eqago1}) occurring here for $c=1$. It is an open problem to decide whether an analog result may hold for $p\geq 5$.
}}
\end{rem}
{}From Galois theory we use results on the pro-$p$ fundamental group $\pi_1^p(\bar{\cX})$ of an algebraic curve $\bar{\cX}$ with $p$-rank $\bar{\gamma}$ greater than $1$; see \cite{pries2011} and
\cite{Shafarevich1954}. The (finite, Galois)  $p$-extensions of $\K(\bar{\cX})$ are taken in a given separable algebraic closure of  $\K(\bar{\cX})$.
\begin{proposition}
\label{sharata} The pro-$p$ fundamental group $\pi_1^p(\bar{\cX})$ is a free group $\Gamma$ generated by $\bar{\gamma}$ generators. The unramified $p$-extensions of $\K(\bar{\cX})$ are in one-to-one correspondence with the normal subgroups of $\pi_1^p(\bar{\cX})$ whose indices are powers of $p$. Moreover, if an unramified $p$-extension $F$ corresponds to the normal subgroup $N$ then the Galois group ${\rm{Gal}}(F|\K(\bar{\cX}))$ is isomorphic to the factor  group $\Gamma/N$. If two unramified $p$-extensions $F$ and $F_1$ correspond to $N$ and $N_1$, respectively, then $F\supseteq F_1$ implies $N \subseteq N_1$ and conversely.
\end{proposition}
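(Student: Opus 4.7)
The plan is to realise $\pi_1^p(\bar\cX)$ concretely as a Galois group and then derive the three assertions from (a) Shafarevich's freeness theorem and (b) the standard infinite Galois correspondence. Fix a separable algebraic closure of $\K(\bar\cX)$ and let $\Omega$ denote the compositum, inside this closure, of all finite Galois unramified $p$-extensions of $\K(\bar\cX)$. Set $\Gamma:=\mathrm{Gal}(\Omega\mid\K(\bar\cX))$; by construction $\Gamma$ is a pro-$p$ group, and this is by definition $\pi_1^p(\bar\cX)$.

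The core ingredient is Shafarevich's theorem \cite{Shafarevich1954} that $\Gamma$ is a free pro-$p$ group of rank $\bar\gamma$. First I would bound the generator number from above: by the Frattini argument it suffices to count generators of every finite continuous quotient of $\Gamma$, and these are exactly the Galois groups of finite Galois unramified $p$-extensions. For such an extension $F\mid\K(\bar\cX)$ with group $G$ of order $p^a$, the Deuring--Shafarevich formula \eqref{eq2deuring} (applied with no ramification) reads $\gamma(F)-1=p^a(\bar\gamma-1)$, and combining this with the fact that the rank of $G/\Phi(G)$ is bounded by $\gamma(F)/p^{a-1}$ in the elementary abelian case yields the required bound $d(G)\le\bar\gamma$; this is exactly the content of Shafarevich's generator bound already invoked in Section~\ref{sec2}. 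Freeness then upgrades this generator bound to a presentation: it is equivalent to the vanishing of $H^2(\Gamma,\mathbb F_p)$, which in this geometric setting amounts to lifting every central $\mathbb F_p$-extension of a finite unramified $p$-quotient to an unramified $p$-cover, see \cite[Section 4.7]{pries2011}.

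Finally, the Galois correspondence between intermediate fields $\K(\bar\cX)\subseteq F\subseteq\Omega$ and closed subgroups of $\Gamma$ is the standard infinite Galois correspondence. Restricting to open normal subgroups $N$ of $\Gamma$ of $p$-power index (equivalently, those with finite $p$-group quotient), one obtains the claimed bijection with the finite Galois unramified $p$-extensions $F=\Omega^N$ of $\K(\bar\cX)$, together with the isomorphism $\mathrm{Gal}(F\mid\K(\bar\cX))\cong\Gamma/N$ and the inclusion-reversing behaviour $F\supseteq F_1\Leftrightarrow N\subseteq N_1$. The main obstacle is the cohomological vanishing required for freeness in the middle step; I would not reprove it but invoke it from \cite{Shafarevich1954} and \cite{pries2011}, since everything else is either the Deuring--Shafarevich formula or formal infinite Galois theory.
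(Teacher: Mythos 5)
The paper offers no proof of this proposition at all: it is stated as a background result imported from Galois theory, with citations to Shafarevich and to Pries--Stevenson --- precisely the sources you invoke --- so your assembly (realise $\pi_1^p(\bar{\cX})$ as ${\rm{Gal}}(\Omega\mid\K(\bar{\cX}))$ for $\Omega$ the compositum of all finite unramified Galois $p$-extensions, quote Shafarevich's freeness theorem, and read off the remaining claims from the infinite Galois correspondence) is exactly the standard route that the paper's citation is standing in for. One caveat: your inline justification of the generator bound $d(G)\le\bar{\gamma}$ does not work as written --- the Deuring--Shafarevich count $\gamma(F)-1=p^a(\bar{\gamma}-1)$ combined with ``rank of $G/\Phi(G)$ bounded by $\gamma(F)/p^{a-1}$'' yields roughly $p(\bar{\gamma}-1)$, not $\bar{\gamma}$; the correct source of the bound is that the maximal unramified elementary abelian $p$-extension of $\K(\bar{\cX})$ has Galois group $\mathrm{Jac}(\bar{\cX})[p]\cong(\mathbb{Z}/p\mathbb{Z})^{\bar{\gamma}}$, so every elementary abelian quotient of $\Gamma$ has rank at most $\bar{\gamma}$. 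Since you ultimately defer to Shafarevich's theorem for both the generator count and the vanishing of $H^2(\Gamma,\mathbb{F}_p)$, this slip does not damage the overall argument.
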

\begin{proposition}
\label{shafart} Let $G$ be a finite $p$-group. If $d(G)$ is the minimum size of the generator sets of $G$, and $\alpha(G)$ is the order of the automorphism group of $G$, then the following statements hold.
\begin{itemize}
\item[\rm(i)] There exists an unramified $p$-extension of $\mathbb{K}(\bar{\cX})$ with Galois group isomorphic to $G$ if and only if $d(G)\leq \gamma$.
\item[\rm(ii)] If $d(G)\leq \gamma$ then the number of different unramified $p$-extensions of $\mathbb{K}(\bar{\cX})$ with Galois group isomorphic to $G$ is equal to
\begin{equation}
\label{frbound} \frac{p^{{\gamma}(n-d(G))}(p^{\gamma}-1)(p^{\gamma}-p)\cdots(p^{\gamma}-p^{d(G)-1})}{\alpha(G)}.
\end{equation}
\end{itemize}
\end{proposition}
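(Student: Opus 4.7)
The plan is to translate both parts into statements about the free pro-$p$ group $\Gamma$ of rank $\bar{\gamma}$ furnished by Proposition \ref{sharata}. Under that bijection, an unramified $p$-extension of $\mathbb{K}(\bar{\cX})$ with Galois group isomorphic to $G$ corresponds to a normal open subgroup $N\trianglelefteq\Gamma$ with $\Gamma/N\cong G$, and hence to an equivalence class of surjective continuous homomorphisms $\varphi\colon\Gamma\twoheadrightarrow G$. Part (i) then follows at once: because $\Gamma$ is free pro-$p$ of rank $\bar{\gamma}$, such a continuous homomorphism is specified by the images of the $\bar{\gamma}$ free generators, and these images generate $G$ (so that $\varphi$ is surjective) precisely when $d(G)\le\bar{\gamma}$.

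For part (ii) I would count the admissible $N$ by first counting surjections $\Gamma\twoheadrightarrow G$ and then dividing by the order of the natural left action of $\aut(G)$ on the set of these surjections. This action is free: if $\sigma\circ\varphi=\varphi$ with $\varphi$ surjective then $\sigma$ is the identity on the image, hence on $G$. Two surjections have the same kernel exactly when they lie in the same $\aut(G)$-orbit, so the number of eligible $N$ equals $|{\rm Hom}^{\rm surj}(\Gamma,G)|/\alpha(G)$.

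To count surjections one counts the $\bar{\gamma}$-tuples $(g_1,\dots,g_{\bar{\gamma}})\in G^{\bar{\gamma}}$ that generate $G$. Here the key group-theoretic input is Burnside's basis theorem for finite $p$-groups: such a tuple generates $G$ if and only if its image modulo the Frattini subgroup $\Phi(G)$ spans the elementary abelian quotient $G/\Phi(G)\cong\mathbb{F}_p^{d(G)}$. The number of spanning $\bar{\gamma}$-tuples in $\mathbb{F}_p^{d(G)}$ equals the number of surjective $\mathbb{F}_p$-linear maps $\mathbb{F}_p^{\bar{\gamma}}\to\mathbb{F}_p^{d(G)}$, and a standard duality count (equivalently, counting injective linear maps $\mathbb{F}_p^{d(G)}\to\mathbb{F}_p^{\bar{\gamma}}$ column by column) yields $(p^{\bar{\gamma}}-1)(p^{\bar{\gamma}}-p)\cdots(p^{\bar{\gamma}}-p^{d(G)-1})$.

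Finally, writing $|G|=p^n$, each spanning tuple in $G/\Phi(G)$ lifts to exactly $|\Phi(G)|^{\bar{\gamma}}=p^{\bar{\gamma}(n-d(G))}$ tuples in $G^{\bar{\gamma}}$; multiplying and then dividing by $\alpha(G)$ reproduces the formula in (\ref{frbound}). The only delicate point is really the freeness of the $\aut(G)$-action, which legitimises the division by $\alpha(G)$; everything else is a direct consequence of Burnside's basis theorem together with elementary linear algebra over $\mathbb{F}_p$, so I do not anticipate a substantive obstacle.
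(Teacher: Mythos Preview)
Your argument is correct and is essentially the standard derivation of this result. The paper itself does not supply a proof of this proposition: it is listed among the preliminary facts imported from the literature (the references to Shafarevich and to the survey of Pries--Stevenson preceding Proposition~\ref{sharata} cover both Propositions~\ref{sharata} and~\ref{shafart}). So there is no ``paper's own proof'' to compare against; what you have written is exactly the argument one would give, resting on Proposition~\ref{sharata} together with Burnside's basis theorem and the freeness of the $\aut(G)$-action on surjections.

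Two minor remarks. First, the paper's statement writes $\gamma$ where the surrounding context uses $\bar{\gamma}$ for the $p$-rank of $\bar{\cX}$; you have silently (and correctly) read it as $\bar{\gamma}$. Second, in part~(i) you might make the ``if'' direction explicit: when $d(G)\le\bar{\gamma}$, send the first $d(G)$ free generators of $\Gamma$ to a minimal generating set of $G$ and the remaining ones to $1$; this guarantees the existence of a surjection. Otherwise nothing is missing.
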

{}From group theory we use the following results; see  \cite[Theorem 12.2.2]{hall1976} and \cite[Chapter III, 3.19 Satz]{huppertI1967}.
\begin{proposition} [Burnside-Hall bound]
\label{BH} Let $G$ be a $p$-group of order $p^n$. If $d(G)$ is the minimum size of the generator sets of $G$ and $\alpha(G)$ is the order of the automorphism group of $G$, then
$\alpha(G)$ divides
\begin{equation}
\label{eq25agostoA}
p^{d(G)(n-d(G))}\,(p^{d(G)}-1)(p^{d(G)}-p)\cdots (p^{d(G)}-p^{d(G)-1}).
\end{equation}
In particular, the order of a Sylow $p$-subgroup of the automorphism group of $G$ divides
\begin{equation}
\label{eq16may2014}
p^{d(G)(n-d(G))+\frac{1}{2}d(G)(d(G)-1)}.
\end{equation}
\end{proposition}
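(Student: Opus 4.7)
The plan is to study the natural action of $\aut(G)$ on the Frattini quotient. Recall that the Frattini subgroup $\Phi(G)$ is characteristic in $G$ and that $G/\Phi(G)$ is elementary abelian; by Burnside's basis theorem, $|G/\Phi(G)|=p^{d(G)}$ and every automorphism of $G$ descends to an automorphism of this quotient. This yields a group homomorphism
\begin{equation*}
\pi:\aut(G)\longrightarrow \aut(G/\Phi(G))\cong GL(d(G),p),
\end{equation*}
and the proof reduces to bounding the image and the kernel of $\pi$ separately.

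For the image, $|\mathrm{Im}(\pi)|$ is a divisor of $|GL(d(G),p)|=(p^{d(G)}-1)(p^{d(G)}-p)\cdots(p^{d(G)}-p^{d(G)-1})$. For the kernel $K=\ker(\pi)$, fix a minimal generating system $g_1,\dots,g_{d(G)}$ of $G$; any $\alpha\in K$ is determined by the tuple $(\alpha(g_1)g_1^{-1},\dots,\alpha(g_{d(G)})g_{d(G)}^{-1})\in \Phi(G)^{d(G)}$, which immediately gives the size estimate $|K|\le p^{d(G)(n-d(G))}$. To promote this inequality to divisibility I would filter $K$ by the iterated Frattini (or $p$-lower central) series: set $\Phi^{(0)}(G)=G$, $\Phi^{(i)}(G)=\Phi(\Phi^{(i-1)}(G))$ for $i\ge 1$, and let $K_i$ be the subgroup of $K$ acting trivially on $G/\Phi^{(i)}(G)$. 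A direct commutator computation, using that each $\Phi^{(i)}(G)$ is characteristic and that $\Phi^{(i-1)}(G)/\Phi^{(i)}(G)$ is elementary abelian, shows that each successive quotient $K_{i-1}/K_{i}$ embeds into the $\mathbb F_p$-vector space $\mathrm{Hom}(G/\Phi(G),\Phi^{(i-1)}(G)/\Phi^{(i)}(G))$. Hence $K$ is a $p$-group of order dividing $p^{d(G)(n-d(G))}$, and multiplying this by the bound on $|\mathrm{Im}(\pi)|$ gives the divisibility statement \eqref{eq25agostoA}.

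For the second assertion, factor $p^{d(G)}-p^{i}=p^{i}(p^{d(G)-i}-1)$ to read off that the $p$-part of $|GL(d(G),p)|$ equals $p^{0+1+\cdots+(d(G)-1)}=p^{d(G)(d(G)-1)/2}$; combined with the $p$-power bound on $|K|$, this gives exactly the exponent in \eqref{eq16may2014}. The main obstacle is the strengthening from inequality to divisibility in the kernel bound: one must verify that the filtration by iterated Frattini subgroups is $K$-equivariant with elementary abelian sections on which $K$ acts by $\mathbb F_p$-linear maps, so that the orders of the successive quotients of $K$ are powers of $p$ whose exponents sum to at most $d(G)(n-d(G))$. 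Everything else is routine linear algebra over $\mathbb F_p$.
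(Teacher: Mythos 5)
The paper does not prove this proposition at all: it is quoted as a known result from Hall \cite[Theorem 12.2.2]{hall1976} and Huppert \cite[Chapter III, 3.19 Satz]{huppertI1967}, so there is no internal proof to measure yours against. Your architecture --- split $\aut(G)$ along $\pi:\aut(G)\to\aut(G/\Phi(G))\cong GL(d(G),p)$, bound the image by $|GL(d(G),p)|$, bound the kernel $K$ by $p^{d(G)(n-d(G))}$, and read off the $p$-part $p^{d(G)(d(G)-1)/2}$ of $|GL(d(G),p)|$ for \eqref{eq16may2014} --- is the standard textbook route (essentially Huppert's), and the image bound, the injectivity of $\alpha\mapsto(\alpha(g_1)g_1^{-1},\dots,\alpha(g_{d})g_{d}^{-1})$, and the final Sylow computation are all correct.

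The step that does not hold up as written is the claim that $K_{i-1}/K_i$ embeds into $\mathrm{Hom}(G/\Phi(G),\Phi^{(i-1)}(G)/\Phi^{(i)}(G))$. The map $g\mapsto\alpha(g)g^{-1}$ satisfies $\alpha(gh)(gh)^{-1}=(\alpha(g)g^{-1})\cdot g(\alpha(h)h^{-1})g^{-1}$, so it is a crossed homomorphism, not a homomorphism, unless $G$ centralizes the section $\Phi^{(i-1)}(G)/\Phi^{(i)}(G)$; and multiplicativity in $\alpha$ of the evaluation map at the generators would require $K_{i-1}$ to act trivially on that section, which you have not established. Fortunately you need much less from this step: it suffices to know that $K$ is a $p$-group, since then the set-theoretic injection $K\hookrightarrow\Phi(G)^{d(G)}$ already upgrades $|K|\le p^{d(G)(n-d(G))}$ to divisibility. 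The standard way to get this is the coprime-action lemma: if $\alpha\in K$ has order prime to $p$, then $[G,\alpha]\le\Phi(G)$, and $G=[G,\alpha]\,C_G(\alpha)=\Phi(G)\,C_G(\alpha)=C_G(\alpha)$ because $\Phi(G)$ consists of non-generators, whence $\alpha=1$; so every nontrivial element of $K$ has $p$-power order. Alternatively, Hall's own proof of \eqref{eq25agostoA} bypasses the kernel entirely: $\aut(G)$ acts freely on the set of ordered minimal generating $d(G)$-tuples of $G$, and by the Burnside basis theorem that set has exactly $p^{d(G)(n-d(G))}\,(p^{d(G)}-1)(p^{d(G)}-p)\cdots(p^{d(G)}-p^{d(G)-1})$ elements, which gives the divisibility in one stroke.
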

Comparison of the above two propositions, especially (\ref{eq16may2014}) with (\ref{frbound}), gives the following result.
\begin{corollary}
\label{25agosto2014} Let $G$ be any finite $p$-group. If the minimum size of the generator sets of $G$ is equal to the Hasse-Witt invariant of $\bar{\cX}$ then the number of unramified $p$-extensions of $\mathbb{K}(\bar{\cX})$ with Galois group isomorphic to $G$ is not divisible by $p$.
\end{corollary}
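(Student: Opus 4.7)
The plan is the direct arithmetic comparison between (\ref{frbound}) and (\ref{eq16may2014}) that the paper explicitly suggests. Set $d:=d(G)=\bar\gamma$ and $|G|=p^n$. Substituting $d=\bar\gamma$ into (\ref{frbound}) and factoring $p^d-p^i=p^i(p^{d-i}-1)$ for $0\le i\le d-1$, the numerator of the counting formula becomes
\[
p^{d(n-d)+d(d-1)/2}\cdot\prod_{j=1}^{d}(p^j-1),
\]
so its $p$-part is exactly $p^{A}$ with $A:=d(n-d)+\tfrac{1}{2}d(d-1)$, while the cofactor $M:=\prod_{j=1}^{d}(p^j-1)$ is coprime to $p$. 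The key observation is that $A$ coincides precisely with the exponent appearing in (\ref{eq16may2014}).

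Writing $\alpha(G)=p^{B}m$ with $\gcd(m,p)=1$, Proposition \ref{BH} furnishes both $B\le A$ (this is (\ref{eq16may2014})) and $\alpha(G)\mid p^{A}M$ (this is the full Burnside-Hall bound). The second divisibility forces $m\mid M$, and so
\[
N \;=\; \frac{p^{A}M}{p^{B}m} \;=\; p^{A-B}\cdot\frac{M}{m},
\]
with $M/m$ coprime to $p$. Comparing the exponent $A$ appearing in the numerator of (\ref{frbound}) with the identical exponent $A$ in (\ref{eq16may2014}) forces $B=A$: under the hypothesis $d(G)=\bar\gamma$, the $p$-content of the Shafarevich numerator saturates exactly the maximum $p$-content admissible for $\alpha(G)$, and hence must be absorbed completely by $\alpha(G)$. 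Consequently $p^{A-B}=1$ and $N=M/m$ is coprime to $p$, as required.

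The main step I expect to require the most care is precisely the equality $B=A$. The Burnside-Hall inequality (\ref{eq16may2014}) alone delivers only $B\le A$, which guarantees integrality of $N$ but not the vanishing of its $p$-adic valuation. The substance of the corollary is therefore the sharpness of (\ref{eq16may2014}) for the $p$-part of $\alpha(G)$ under the hypothesis $d(G)=\bar\gamma$: the numerator provided by (\ref{frbound}) exhibits $p^{A}$ explicitly, and the integrality of the Shafarevich count together with the matching Burnside-Hall ceiling is what pins the $p$-part of $\alpha(G)$ to be exactly $p^{A}$, rather than some smaller power. This tight matching is the only nontrivial step; once it is in place, the conclusion $p\nmid N$ is immediate from the factorization above.
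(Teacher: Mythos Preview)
Your reduction is correct: with $d=d(G)=\bar\gamma$ the numerator of (\ref{frbound}) has $p$-part exactly $p^A$ with $A=d(n-d)+\tfrac12 d(d-1)$, and writing $\alpha(G)=p^Bm$ with $(m,p)=1$ one gets $v_p(N)=A-B$. So the whole corollary comes down to $B=A$. You see this yourself and flag it as ``the only nontrivial step''.

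The trouble is that your justification of $B=A$ is circular. You invoke two facts: the integrality of $N$, and the Burnside--Hall ceiling (\ref{eq16may2014}). Both of these give only $B\le A$; neither, nor their conjunction, forces equality. Saying that the $p$-content of the numerator ``must be absorbed completely by $\alpha(G)$'' is exactly the statement $B=A$ you are trying to prove, not an argument for it.

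In fact $B<A$ does occur. Take $G=C_{p^3}\times C_p$, so $n=4$, $d=2$ and $A=2\cdot 2+1=5$. A direct count gives $|\aut(G)|=p^4(p-1)^2$, hence $B=4<5$. Plugging into (\ref{frbound}) with $\bar\gamma=2$ yields
\[
N=\frac{p^{5}(p-1)(p^2-1)}{p^{4}(p-1)^2}=p(p+1),
\]
which \emph{is} divisible by $p$. So the corollary, read literally, is false; the comparison of (\ref{eq16may2014}) with (\ref{frbound}) that the paper alludes to only produces the conclusion under the additional hypothesis that a Sylow $p$-subgroup of $\aut(G)$ \emph{attains} (\ref{eq16may2014}), i.e.\ $B=A$. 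That hypothesis is precisely what is assumed in Theorem~\ref{16mag2014a}, which is the only place the corollary is invoked, so the paper's application is sound even though the corollary as stated needs this extra clause. Your write-up should make that hypothesis explicit rather than trying to deduce $B=A$ from ingredients that cannot yield it.
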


\begin{rem}
\label{rem15may2014} \em{Well known groups $G$ whose automorphism groups attain  (\ref{eq25agostoA}) are the direct product of $d(G)$ copies of the
cyclic group of order $p^N$ where $N$ is any positive integer. Furthermore, the Sylow $p$-subgroup of the special linear group $SL(p,p)$
is isomorphic to the group $U(p,p)$ of all non-degenerate upper unitriangular ($p\times p$)-matrices over $\mathbb{F}_p$ and the minimum size of the generator sets of $U(p,p)$ is equal $p-1$. Therefore, Corollary \ref{25agosto2014} applies to any curve $\bar{\cX}$ with Hasse-Witt invariant equal to $p-1$. Using the database of GAP,  more such examples can be obtained for smaller $p$.}
\end{rem}
{}From Projective geometry, the following known result is used.
\begin{lemma}\label{flag}
In the $r$-dimensional projective space  $PG(r,\mathbb{K})$ over an algebraically closed field $\mathbb{K}$ of characteristic $p$, let $S$ be a finite $p$-subgroup of $PGL(r+1,\mathbb K)$. If $r\geq 2$ then $S$ preserves a
flag $$\Pi_0\subset \Pi_1 \subset \ldots \subset \Pi_{r-1}$$
where $\Pi_i$ is an $i$-dimensional projective
subspace
of $PG(r,\mathbb{K})$.
\end{lemma}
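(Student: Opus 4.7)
The plan is to reduce the statement to the linear setting by lifting $S$ to a finite unipotent subgroup of $GL(r+1,\mathbb{K})$, and then invoke the classical Kolchin-type fact that such a subgroup stabilises a complete flag in $\mathbb{K}^{r+1}$.

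First I would lift $S$ isomorphically to a finite subgroup $\tilde S$ of $GL(r+1,\mathbb{K})$. The preimage of $S$ under the natural projection $\pi\colon GL(r+1,\mathbb{K})\to PGL(r+1,\mathbb{K})$ fits in a central extension
\begin{equation*}
1\to\mathbb{K}^{*}\to\pi^{-1}(S)\to S\to 1,
\end{equation*}
whose isomorphism class lies in $H^{2}(S,\mathbb{K}^{*})$. Since $|S|$ is a power of $p$, it annihilates this cohomology group; on the other hand, in characteristic $p$ the map $x\mapsto x^{p}$ is a bijection of $\mathbb{K}^{*}$ (it is surjective as $\mathbb{K}$ is algebraically closed, and its kernel is trivial because $(x-1)^{p}=0$ forces $x=1$), hence multiplication by $|S|$ acts bijectively on $\mathbb{K}^{*}$ and on $H^{2}(S,\mathbb{K}^{*})$. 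The two facts combined give $H^{2}(S,\mathbb{K}^{*})=0$; the extension therefore splits, producing a subgroup $\tilde S\subset GL(r+1,\mathbb{K})$ of order $|S|$ projecting isomorphically onto $S$.

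Every element of $\tilde S$ has $p$-power order, so in characteristic $p$ its minimal polynomial divides some $(x-1)^{p^{k}}$, meaning $\tilde S$ consists entirely of unipotent matrices. I would then establish, by induction on $\dim V$, that any finite $p$-group $G$ of unipotent transformations of a finite-dimensional $\mathbb{K}$-vector space $V$ preserves a complete flag. Indeed, a non-trivial central element $z\in G$ has $z-I$ nilpotent, so $\ker(z-I)$ is non-zero and $G$-invariant (as $z$ is central); by induction on $|G|$ a common fixed vector $v$ exists in it, yielding an invariant line $V_{1}=\langle v\rangle$; the induced action on $V/V_{1}$ is still unipotent, and induction on $\dim V$ provides a complete $\tilde S$-invariant flag $0=V_{0}\subset V_{1}\subset\cdots\subset V_{r+1}=\mathbb{K}^{r+1}$. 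Setting $\Pi_{i}$ equal to the image of $V_{i+1}$ in $PG(r,\mathbb{K})$ for $0\le i\le r-1$ produces the required $S$-invariant flag.

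The main obstacle is arranging a genuinely finite lift. Lifting each $g\in S$ individually to a unipotent $\tilde g\in GL(r+1,\mathbb{K})$ (by rescaling a preimage through a $p^{k}$-th root so that $\tilde g^{|g|}=I$) does not suffice, because products of unipotent matrices are in general not unipotent, and the subgroup they generate in $GL(r+1,\mathbb{K})$ might even fail to be finite. The cohomological splitting above is precisely what forces the lift to be an isomorphic copy of $S$ made entirely of unipotent elements, after which the flag is produced by the elementary induction sketched.
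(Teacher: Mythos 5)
Your argument is correct. Note, however, that the paper does not prove this lemma at all: it is stated as a known fact from projective geometry (``From Projective geometry, the following known result is used''), so there is no in-paper proof to compare against. Your proposal supplies a complete, self-contained justification, and it correctly isolates the one genuinely delicate point, namely that individually lifting each element of $S$ to a unipotent matrix need not produce a finite (or even unipotent) subgroup of $GL(r+1,\mathbb{K})$. The cohomological splitting handles this cleanly: $H^2(S,\mathbb{K}^*)$ is annihilated by $|S|=p^n$, while the Frobenius $x\mapsto x^{p}$ is an automorphism of $\mathbb{K}^*$ in characteristic $p$ over an algebraically closed field, so multiplication by $|S|$ acts bijectively on $H^2(S,\mathbb{K}^*)$, forcing that group to vanish and the central extension $1\to\mathbb{K}^*\to\pi^{-1}(S)\to S\to 1$ to split. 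The resulting finite lift $\tilde S$ consists of unipotent elements (each has $p$-power order, so its minimal polynomial divides $(x-1)^{p^k}$), and your Kolchin-style induction is sound: the restriction of $\tilde S$ to $\ker(z-I)$ for a nontrivial central $z$ factors through a proper quotient, so induction on the group order yields a common fixed vector, and induction on the dimension then yields the full invariant flag, which descends to the required flag of projective subspaces. This is a standard and efficient route; the only alternative worth mentioning is to invoke Kolchin's theorem on unipotent groups directly once the finite unipotent lift is in hand, which is exactly what your induction reproves.
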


\section{Proof of Theorem \ref{princ}.}
\label{princip}
In this section, $\cX$ stands for a curve which satisfies the hypotheses of Theorem \ref{princ}.

{}From \cite[Lemma 11.129]{hirschfeld-korchmaros-torres2008}, we have the following result.
\begin{lemma}
\label{lem17Afeb2014} If $\gamma=0$ then {\rm{(i)}} of Theorem \ref{princ} holds.
\end{lemma}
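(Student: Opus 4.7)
The plan is to apply the Deuring--Shafarevich formula \eqref{eq2deuring} directly. Setting $\gamma=0$ yields
$$-1 = |S|(\bar\gamma - 1) + \sum_{i=1}^{k}(|S| - \ell_i),$$
where $\ell_1, \ldots, \ell_k$ are the sizes of the short orbits of $S$ on $\cX$. The goal is to deduce that $S$ has exactly one short orbit and that this orbit is a single fixed point, i.e.\ $k = 1$ and $\ell_1 = 1$, which is equivalent to conclusion (i) of Theorem \ref{princ}.

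First I would rule out $\bar\gamma \ge 1$: in that case the right-hand side is a sum of non-negative terms and so cannot equal $-1$. Hence $\bar\gamma = 0$, and the identity collapses to
$$|S| - 1 = \sum_{i=1}^{k}(|S| - \ell_i).$$
Next I would bound each summand from below. Any point in a short orbit has a non-trivial stabilizer, which being a $p$-subgroup has order divisible by $p$; hence $\ell_i = |S|/|S_P| \le |S|/p$ and $|S| - \ell_i \ge |S|(p-1)/p$. Because $p \ge 3$, each term is at least $2|S|/3$. Consequently $k \ge 2$ would force the right-hand side above $|S| - 1$, which is impossible; and $k = 0$ would force $|S| = 1$, contradicting the non-triviality of $S$. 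Thus $k = 1$, and the single equation $|S| - \ell_1 = |S| - 1$ gives $\ell_1 = 1$. This means $S$ fixes a unique point $P$ of $\cX$ and acts freely on $\cX \setminus \{P\}$; translated to the function field $\K(\cX)|\K(\cX)^S$, this is precisely the assertion that the extension is totally ramified at the place corresponding to $P$ and unramified elsewhere.

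I do not foresee any real obstacle: the whole argument is a short numerical manipulation of \eqref{eq2deuring} that does not invoke the quantitative bound \eqref{hyp} of Theorem \ref{princ}. The only ingredients used beyond the formula itself are $p \ge 3$ and the non-triviality of $S$, exactly matching the content of \cite[Lemma 11.129]{hirschfeld-korchmaros-torres2008} cited by the authors.
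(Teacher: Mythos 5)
Your argument is correct and complete: setting $\gamma=0$ in \eqref{eq2deuring} forces $\bar\gamma=0$, then $k=1$ and $\ell_1=1$, which is precisely the statement that $\K(\cX)|\K(\cX)^S$ is totally ramified at one place and unramified elsewhere. The paper offers no argument of its own here, simply invoking \cite[Lemma 11.129]{hirschfeld-korchmaros-torres2008}; your computation is in substance the standard proof of that cited lemma, so the two routes coincide, with the minor benefit that your version makes the step self-contained and visibly independent of the hypothesis \eqref{hyp}.
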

Moreover, (\ref{naka16feb2013}) rules out the possibility that case $\gamma=1$ occurs  in Theorem \ref{princ}. Therefore,
\begin{equation}
\label{vecchioII} \gamma\geq 2.
\end{equation}
\begin{lemma}
\label{lem17feb2014} If $S$ fixes a point of $\cX$ then {\rm{(ii)}} of Theorem \ref{princ} holds.
\end{lemma}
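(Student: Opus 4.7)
The plan is to combine Stichtenoth's bound (\ref{eq117feb2013}), the Deuring--Shafarevich formula (\ref{eq2deuring}), and the inequality $\gamma\leq \gg$, each time using the hypothesis (\ref{hyp}) as an upper bound on $\gg$ to force $|S|$ to be small. Throughout I keep in mind that $\gamma\geq 2$ by Lemma \ref{lem17Afeb2014} and (\ref{naka16feb2013}).

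First I would apply the Stichtenoth dichotomy at the fixed point $P$: either (A) $|S|\leq \frac{p}{p-1}\gg$, or (B) the extension $\K(\cX)\mid\K(\cX)^S$ ramifies only at $P$ and is totally ramified there. In case (B), $\{P\}$ would be the unique short orbit, so (\ref{eq2deuring}) would reduce to $\gamma=|S|\bar{\gamma}$; since $\gamma\geq 2$ this would force $\bar{\gamma}\geq 1$ and hence $|S|\leq \gamma\leq \gg$. Plugging $|S|\leq \gg$ into (\ref{hyp}) would yield $\gg(p+1)<p^2$, that is $\gg<p\leq |S|$, contradicting $\gg\geq |S|$. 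Case (B) is thus excluded.

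In case (A) there must be at least one short orbit of size $\ell_2\leq |S|/p$ besides $\{P\}$; writing (\ref{eq2deuring}) in the form
\begin{equation*}
\gamma=|S|\bar{\gamma}+\sum_{i\geq 2}(|S|-\ell_i),
\end{equation*}
I would next rule out $\bar{\gamma}\geq 1$. Then $\gamma\geq |S|+|S|(p-1)/p=|S|(2p-1)/p$, so $\gg\geq |S|(2p-1)/p$, while (\ref{hyp}) gives $\gg<1+|S|(p^2-p-1)/p^2$; comparing these two bounds collapses to $|S|(p^2+1)/p^2<1$, which is impossible. Hence $\bar{\gamma}=0$.

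With $\bar{\gamma}=0$ the displayed formula becomes $\gamma=\sum_{i\geq 2}(|S|-\ell_i)\geq (k-1)|S|(p-1)/p\geq |S|(p-1)/p$, where $k\geq 2$ denotes the number of short orbits. Comparing $|S|(p-1)/p\leq \gg$ with (\ref{hyp}), the common fractional part cancels and one obtains $|S|<p^2$, hence $|S|=p$. Then every short orbit has size dividing $p$ and strictly less than $p$, so size $1$, giving $\gamma=(k-1)(p-1)$. On the other hand (\ref{hyp}) with $|S|=p$ forces $\gg\leq p-1$, and the chain $(k-1)(p-1)=\gamma\leq \gg\leq p-1$ forces $k=2$ and $\gamma=\gg=p-1$; hence $\cX$ is ordinary of genus $p-1$, which is case (ii). The main obstacle is essentially bookkeeping: one must isolate the contribution of the fixed orbit $\{P\}$ in (\ref{eq2deuring}) and verify that the numerical comparisons between $\frac{p^2}{p^2-p-1}$, $\frac{p}{p-1}$ and $\frac{p}{p-2}$ all go the right way; none of the individual steps is technically deep.
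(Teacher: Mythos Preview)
Your argument is correct, but it differs substantially from the paper's proof. The paper never writes down the Deuring--Shafarevich formula here; instead it combines Stichtenoth's inequality $|S|\le \frac{p}{p-1}\gg$ directly with (\ref{hyp}) to obtain $|S|<p^2+\frac{p(p-1)}{p-2}<p^3$, then eliminates $|S|=p^2$ by the same pair of inequalities, and finally, with $|S|=p$, pins down $\gg=p-1$ from Stichtenoth versus (\ref{hyp}) and reads off $\gamma=p-1$ from Nakajima's bound $|S|\le\frac{p}{p-2}(\gamma-1)$.

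Your route is more self-contained: apart from the standing fact $\gamma\ge 2$, you avoid invoking either Stichtenoth's or Nakajima's inequalities and derive everything from (\ref{eq2deuring}) together with $\gamma\le\gg$ and (\ref{hyp}). In particular, your case (B) is ruled out by $\gamma=|S|\bar\gamma$ rather than by the $\gamma=0$ observation the paper relies on, and your bound $|S|<p^2$ in case (A) comes from $\gamma\ge |S|(p-1)/p$ rather than from Stichtenoth. This is a genuine alternative and arguably cleaner, since the single identity (\ref{eq2deuring}) does all the work. One presentational point: you phrase the dichotomy as ``either the Stichtenoth bound holds or (B)'', but in fact you never use the Stichtenoth bound itself in case (A); what you actually use is only that (B) has been excluded, hence $k\ge 2$. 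It would be tidier to state the split simply as ``either $\{P\}$ is the unique short orbit or it is not'', which makes clear that Stichtenoth's inequality is not needed at all in your argument.
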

\begin{proof} Comparison of  (\ref{hyp}) with (\ref{eq117feb2013}) gives
$$|S|<p^2+\textstyle\frac{p(p-1)}{p-2}.$$
Since the right hand side is smaller than $p^3$, either $|S|=p$ or $|S|=p^2$ holds. In the latter case, (\ref{hyp}) yields $g<p(p-1)$ but this contradicts (\ref{eq117feb2013}). If $|S|=p$, then
(\ref{hyp}) reads $(p^2-p-1)>p(\gg(\cX)-1)$ while (\ref{eq117feb2013}) yields $\gg(\cX)-1\geq p-2$. Therefore $\gg(\cX)-1$ is an integer in the interval $[p-2,(p^2-p-1)/p)$ whose length is smaller than $2$. This is only possible when either $\gg(\cX)-1=p-2$ or $\gg(\cX)-1=p-1$.  Comparison with (\ref{hyp}) rules out the latter case. So $\gg(\cX)=p-1$. From Nakajima's bound $|S|\le p/(p-2) (\gamma(\cX)-1)$, we have $\gamma(\cX)\geq p-1$. Therefore $\gamma(\cX)=\gg(\cX)=p-1$.
\end{proof}
{}From now on we assume that neither (i) or (ii) of Theorem \ref{princ} hold for $\cX$. In particular,
\begin{equation}
\label{eqcaseii}
|S|\geq p^2.
\end{equation}
\begin{proposition}
\label{propap=3} $\cX$ is an ordinary Nakajima extremal curve. Moreover,
$S$ has exactly two short orbits on $\cX$, both of length $\textstyle\frac{1}{p}|S|$, and the identity is the unique element in $S$ fixing every point of the short orbits.
\end{proposition}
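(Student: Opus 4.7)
The plan is to combine the Hurwitz genus formula (\ref{eq1}) and the Deuring-Shafarevich formula (\ref{eq2deuring}) for the cover $\cX\to\cX/S$, and squeeze the resulting numerical data between the Nakajima bound (\ref{naka16feb2013}) and the standing hypothesis (\ref{hyp}). Let $\bar{\gg},\bar{\gamma}$ be the genus and $p$-rank of $\cX/S$, let $L_1,\ldots,L_k$ be the short orbits of sizes $\ell_i$, and set $\Sigma=\sum_i(|S|-\ell_i)$. Because case (ii) is excluded by Lemma \ref{lem17feb2014}, no non-trivial element of $S$ fixes a point, so $\ell_i\leq |S|/p$ and $|S|-\ell_i\geq |S|(p-1)/p$ for every short orbit. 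Since $|S_P^{(0)}|=|S_P^{(1)}|=|S_P|$, each ramified point contributes $d_P\geq 2(|S_P|-1)$, hence $D:=\sum_P d_P\geq 2\Sigma$. The hypothesis yields $\gamma-1\leq \gg-1<|S|(p^2-p-1)/p^2<|S|$, a bound used throughout.

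The first task is to prove $\bar{\gamma}=\bar{\gg}=0$. If $\bar{\gamma}\geq 2$, DS gives $\Sigma=\gamma-1-|S|(\bar{\gamma}-1)\leq \gamma-1-|S|<0$, impossible. If $\bar{\gamma}=1$, then $\Sigma=\gamma-1<|S|$ forces $k\cdot |S|(p-1)/p\leq\gamma-1$, i.e., $k\leq 1$; the case $k=0$ gives $\gamma=1$, while $k=1$ with $\ell_1\leq |S|/p$ gives $\gamma\geq 1+|S|(p-1)/p>\gg$, a contradiction. Hence $\bar{\gamma}=0$. Subtracting twice DS from Hurwitz yields $2(\gg-\gamma)=2|S|(\bar{\gg}-\bar{\gamma})+(D-2\Sigma)\geq 2|S|\bar{\gg}$, so $\bar{\gg}\geq 1$ would force $\gg\geq |S|+\gamma>|S|$, contradicting $\gg<|S|$; therefore $\bar{\gg}=0$.

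Under $\bar{\gg}=\bar{\gamma}=0$, DS reads $\Sigma=\gamma-1+|S|$, and $\Sigma\geq k|S|(p-1)/p$ combined with $\gamma-1<|S|(p^2-p-1)/p^2$ gives $k<(2p+1)/p$, so $k\leq 2$. The case $k=1$ forces $\ell_1=1-\gamma<0$, impossible; thus $k=2$ and $\ell_1+\ell_2=|S|-\gamma+1$. If some $\ell_i\leq |S|/p^2$, then $\ell_1+\ell_2\leq |S|(p+1)/p^2$, whence $\gamma\geq 1+|S|(p^2-p-1)/p^2>\gg$, a contradiction; therefore $\ell_1=\ell_2=|S|/p$, every $S_{P_i}$ has order $p$, and $\gamma=1+|S|(p-2)/p$. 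To upgrade to ordinariness, use that for each $S_{P_i}$ of order $p$ the different exponent is $d_{P_i}=(j_i+1)(p-1)$, where $j_i\geq 1$ is the largest index $j$ with $S_{P_i}^{(j)}\neq\{1\}$; Hurwitz then becomes $(\gg-1)/|S|=((p-1)(j_1+j_2+2)-2p)/(2p)$. Inserting this into (\ref{hyp}) and simplifying produces $j_1+j_2+2<4+2/p$, forcing $j_1=j_2=1$, hence $\gg=\gamma=1+|S|(p-2)/p$, so $\cX$ is ordinary and Nakajima extremal.

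For the last claim, suppose $g\in S\setminus\{1\}$ fixes every point of $L_1\cup L_2$. Then $g\in S_P$ for each such $P$, and since $|S_P|=p$ the element $g$ has order $p$ and fixes at least $|L_1|+|L_2|=2|S|/p$ points of $\cX$. Applying DS to $\langle g\rangle$ gives $(p-1)\lambda=\gamma-1+p(1-\bar{\gamma}')\leq \gamma-1+p$ for the number of fixed points $\lambda$; substituting $\lambda\geq 2|S|/p$ and $\gamma=1+|S|(p-2)/p$ leads to $|S|\leq p$, contradicting (\ref{eqcaseii}). I expect the ordinariness step to be the tightest part of the argument, because it requires turning the full ramification filtration at the two short orbits into a numerical inequality sharp enough to force $j_1=j_2=1$ inside the narrow window left open by (\ref{hyp}); the other steps are comparatively robust Hurwitz/DS manipulations.
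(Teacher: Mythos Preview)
Your argument is correct and follows essentially the same route as the paper: squeeze the Deuring--Shafarevich and Hurwitz data against the hypothesis (\ref{hyp}) to force $\bar\gamma=0$, $k=2$, $\ell_1=\ell_2=|S|/p$, then use the higher ramification depth to get ordinariness, and finally apply Deuring--Shafarevich to $\langle g\rangle$ for the last claim. The only notable cosmetic differences are that you prove $\bar\gg=0$ explicitly via the ``Hurwitz $-\,2\cdot$DS'' trick (the paper simply uses $\bar\gg\ge 0$ in the Hurwitz estimate), and you rule out $\ell_i\le |S|/p^2$ directly rather than writing $\ell_i=p^{m},p^{r}$ and chasing exponents.

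One wording slip to fix: from Lemma~\ref{lem17feb2014} you should conclude that \emph{$S$ fixes no point of $\cX$} (i.e.\ no short orbit has length $1$), not that ``no non-trivial element of $S$ fixes a point''; the latter would mean there are no short orbits at all. This does not affect the proof, since the inequality $\ell_i\le |S|/p$ you use holds automatically for any short orbit of a $p$-group. Likewise, from $\gg-1<|S|$ you only get $\gg\le |S|$, but since $\bar\gg\ge 1$ forces $\gg\ge |S|+\gamma\ge |S|+2$, the contradiction goes through anyway.
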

\begin{proof} Let $\gg=\gg(\cX)$ and $\gamma=\gamma(\cX)$ where $\gamma\geq 2$ by (\ref{vecchioII}). Let $\bar{\gamma}$ be the $p$-rank of the quotient curve $\bar{\cX}=\cX/S$.
{}From (\ref{eq2deuring}),
\begin{equation}
\label{eq9feb2013}
\gamma -1=\bar{\gamma}|S|-|S|+\sum_{i=1}^k(|S|-\ell_i)=(\bar{\gamma}+k-1)|S|-\sum_{i=1}^k\ell_i\ge (\bar{\gamma}+\textstyle\frac{p-1}{p}k-1)|S|,
\end{equation}
where $\ell_1,\ldots,\ell_k$ are the sizes of the short orbits of
$S$.

If no such short orbits exist, then $\gamma-1=|S|(\bar{\gamma}-1)$  whence $\bar{\gamma}>1$ by $\gamma\geq 2$.
Therefore, $|S|{\le }\gamma-1\leq \gg-1$ contradicting (\ref{hyp}).

Hence $k\geq 1$, and if $\bar{\gamma}\geq 1$ then (\ref{eq9feb2013}) yields that $|S|\leq \textstyle\frac{p}{p-1}(\gamma-1)$ contradicting (\ref{hyp}).
So, $\bar{\gamma}=0$, and (\ref{eq9feb2013}) together with (\ref{hyp}) imply that

$$k<\textstyle\frac{2p^2-p-1}{p^2-p}=2+\textstyle\frac{1}{p}$$ whence $1\le k \le 2$. The case $k=1$ cannot actually occur by (\ref{eq9feb2013}).

Therefore, $\bar{\gamma}=0$ and $k=2$. Let $\Omega_1$ and $\Omega_2$ be the short orbits of $S$, and let $\ell_i=|\Omega_i|$ for $i=1,2$. Then (\ref{eq9feb2013}) reads
\begin{equation}
\label{eq9afeb2013}
\gamma-1=|S|-(\ell_1+\ell_2).
\end{equation}
Also, $\ell_1+\ell_2<|S|$. Write $|S|=p^h,\ell_1=p^m,\ell_2=p^r$ with $h>m\geq r$.
Here $r>0$ by Lemma \ref{lem17feb2014}. From (\ref{hyp}) and (\ref{eq9afeb2013}),
$$\textstyle\frac{p^2}{p^2-p-1}(p^m+p^r)>p^h(\textstyle\frac{p^2}{p^2-p-1}-1),$$
whence $p^{2+m-h}+p^{2+r-h}>p+1.$ Since $m\geq r$, this yields $m=h-1$. Hence, $p^{2+r-h}>1$, and $h-1=m\geq r\geq h-1$. Therefore,
$$\ell_1=\ell_2=\textstyle\frac{|S|}{p}.$$
Let $\bar{\gg}$ be the genus of the quotient curve $\bar{\cX}=\cX/S$.
The Hurwitz genus formula applied to $S$ gives
\begin{equation}
\label{eq19feb}
2\gg-2= |S|(2\bar \gg-2)+
\textstyle\frac{p-1}{p}|S|(4+k_1+k_2)
\end{equation}
where, for a point $P_i\in \Omega_i$,  $k_i$ is the smallest non-negative integer such that $|S_{P_i}^{(2+k_i)}|=1$. Suppose on the contrary that $\cX$ is not an ordinary curve. Then $k_1+k_2\geq 1$. From (\ref{eq19feb}), $$
2g-2\ge -2|S|+5|S|\textstyle\frac{p-1}{p}=|S|(\textstyle\frac{3p-5}{p}).
$$
 Comparing this with (\ref{hyp}) yields
 $$
 \frac{2p}{3p-5}\ge \frac{|S|}{g-1}\ge \frac{p^2}{p^2-p-1},
 $$ a contradiction.

Assume that a non-trivial element $s\in S$ of order $p$ fixes $\Omega_1\cup \Omega_2$ pointwise. From the Deuring-Shafarevich formula applied to $\langle s \rangle$,
$$\textstyle\frac{p-2}{p}|S|\geq -p + 2\textstyle\frac{|S|}{p}(p-1),$$
which is only possible for $|S|=p$.
\end{proof}
We stress that the first claim of Proposition \ref{propap=3} means that
\begin{equation}
\label{eq18feb2014} \gg-1=\gamma-1=\textstyle\frac{p-2}{p}|S|,
\end{equation}
and hence $\cX$ is a Nakajima extremal curve.
\begin{proposition}
\label{prop6febb2013} $\cX$ is not hyperelliptic.
\end{proposition}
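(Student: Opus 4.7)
My plan is to argue by contradiction, assuming $\cX$ is hyperelliptic with hyperelliptic involution $h$, and to exhibit an incompatibility between the orbit structure recorded in Proposition \ref{propap=3} and the induced $S$-action on the rational quotient $\cX/\langle h\rangle$. Since $\gg\ge 2$ and $p\ne 2$, the involution $h$ is unique, hence central in $\aut(\cX)$, so it commutes with $S$. Because $|S|$ is an odd prime power, the kernel of the induced $S$-action on $\cX/\langle h\rangle\cong\mathbb{P}^1$ is contained in $S\cap\langle h\rangle=\{1\}$, so the hyperelliptic cover $\pi\colon\cX\to\mathbb{P}^1$ is $S$-equivariant and realises $S$ faithfully as a finite $p$-subgroup of $PGL(2,\mathbb{K})$.

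Next I would invoke the standard classification of finite $p$-subgroups of $PGL(2,\mathbb{K})$ in characteristic $p$: the centre of $S$ fixes a point of $\mathbb{P}^1$, hence so does $S$ itself; then $S$ lies in the corresponding Borel subgroup, whose $p$-Sylow is the additive group of $\mathbb{K}$. Therefore $S$ is elementary abelian and, in a suitable affine coordinate, acts on $\mathbb{A}^1=\mathbb{P}^1\setminus\{\infty\}$ by translations; the $S$-orbits on $\mathbb{P}^1$ are $\{\infty\}$ together with a family of orbits of length $|S|$ in $\mathbb{A}^1$.

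The contradiction is then immediate. Fix a short orbit $\Omega_i$ of $S$ on $\cX$ from Proposition \ref{propap=3}, of length $|S|/p\ge p\ge 3$ by \eqref{eqcaseii}. Its image $\pi(\Omega_i)$ is an $S$-orbit on $\mathbb{P}^1$, hence of cardinality either $1$ or $|S|$. The former case is excluded by $|\Omega_i|\le |\pi^{-1}(\infty)|\le 2<p\le |\Omega_i|$; the latter by $|S|=|\pi(\Omega_i)|\le|\Omega_i|=|S|/p$, which would force $p\le 1$. Either way we reach a contradiction, so $\cX$ cannot be hyperelliptic.

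The only place where real care is needed is the short classification of finite $p$-subgroups of $PGL(2,\mathbb{K})$ in characteristic $p$ (centre has a fixed point on $\mathbb{P}^1$, so $S$ sits in a Borel whose unipotent radical is $\mathbb{G}_a$, whence $S$ is elementary abelian with a single fixed point); once that is in hand, the remainder is a direct pigeonhole comparison of orbit sizes under the $S$-equivariant map $\pi$.
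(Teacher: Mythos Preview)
Your proof is correct and takes a genuinely different route from the paper's. The paper argues via Weierstrass points: a hyperelliptic curve in odd characteristic has exactly $2\gg+2$ Weierstrass points, these form an $S$-invariant set, and every $S$-orbit has length divisible by $p$; hence $p\mid \gg+1$. But from \eqref{eq18feb2014} and \eqref{eqcaseii} one also has $p\mid \gg-1$, forcing $p\mid 2$, a contradiction.

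Your approach instead descends along the hyperelliptic double cover: since $h$ is central, $S$ acts faithfully on $\cX/\langle h\rangle\cong\mathbb{P}^1$, so $S$ embeds in $PGL(2,\mathbb{K})$, is therefore contained in a unipotent radical, and has a unique fixed point $\infty$ on $\mathbb{P}^1$ with all other orbits regular. You then derive a contradiction by comparing this with the image of a short $S$-orbit $\Omega_i$ under $\pi$. This works cleanly; note that you could shorten the endgame: since $\pi^{-1}(\infty)$ is $S$-invariant of size at most $2<p$, the odd-order group $S$ must fix a point of $\cX$, contradicting directly the standing assumption (via Lemma~\ref{lem17feb2014}) that case~(ii) has been excluded.

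Each approach has its merits. The paper's Weierstrass-point argument is a two-line divisibility count but uses the exact value of $\gg-1$ from \eqref{eq18feb2014}. Your argument is more structural, requiring only the orbit data of Proposition~\ref{propap=3} and \eqref{eqcaseii}, and it simultaneously recovers the incidental fact that $S$ would have to be elementary abelian---which already clashes with Proposition~\ref{propbp=3} once $|S|>p^2$.
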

\begin{proof} Since the length of any $S$-orbit in $\cX$ is divisible by $p$, the number of distinct Weierstrass points of $\cX$ is also divisible by $p$. On the other hand, a hyperelliptic curve of genus $\gg$ defined over a field of zero or odd characteristic has as many as $2\gg+2$ Weierstrass points, see \cite[Theorem 7.103]{hirschfeld-korchmaros-torres2008}. Therefore, if $\cX$ were hyperelliptic, both numbers $\gg+1$ and $\gg-1=\frac{p-2}{p}|S|$ would be divisible by $p$, a contradiction with $|S|\geq p^2$.
\end{proof}

{}From the rest of the paper, we keep up our notation; in particular $\Omega_1$ and $\Omega_2$ denote the short orbits of $S$ on $\cX$. By the second claim of Proposition \ref{propap=3}, the following hold.
\begin{lemma}
\label{eq130apr2013}  For every point $P\in \Omega_1 \cup \Omega_2$, the stabilizer $S_P$ of $P$ has order $p$.
\end{lemma}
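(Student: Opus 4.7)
The plan is essentially a one-line application of the orbit–stabilizer theorem. By the second claim of Proposition \ref{propap=3}, each short orbit $\Omega_i$ of $S$ on $\cX$ has length $|\Omega_i|=|S|/p$. Fixing a point $P\in\Omega_1\cup\Omega_2$, the $S$-orbit of $P$ is precisely one of these two short orbits, so
$$|S_P|=\frac{|S|}{|o(P)|}=\frac{|S|}{|S|/p}=p.$$
That is all the argument requires; no ramification-theoretic input beyond Proposition \ref{propap=3} is needed.

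I do not foresee an obstacle: the length of each short orbit has already been pinned down in the preceding proposition, and the orbit–stabilizer relation is purely group-theoretic. The only thing worth noting in passing is that $S_P$ is nontrivial (so that $P$ indeed lies in a short orbit), which is automatic since $|S|/p<|S|$ for $|S|\ge p^2$ (cf.\ \eqref{eqcaseii}), and that since $S$ is a $p$-group the only possible value $p$ obtained from the computation above is consistent.

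Hence the lemma follows immediately, and it prepares the ground for further ramification-group analysis at points of $\Omega_1\cup\Omega_2$ in the subsequent arguments, where one will need to know that the decomposition group at each such point is cyclic of order $p$.
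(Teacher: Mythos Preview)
Your proof is correct and matches the paper's approach exactly: the paper also presents this lemma as an immediate consequence of the second claim of Proposition~\ref{propap=3} (that each short orbit has length $|S|/p$), with the orbit--stabilizer theorem supplying the one-line conclusion. There is nothing to add.
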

\begin{proposition}
\label{propbp=3} If $S$ is  abelian then $|S|=p^2$ and $S$ is elementary abelian.
\end{proposition}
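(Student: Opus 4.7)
The plan is to exploit the fact that, in an abelian group, all stabilizers of points in a given orbit coincide, and then to apply the Deuring--Shafarevich formula to a carefully chosen subgroup of order $p$. From Proposition \ref{propap=3} and Lemma \ref{eq130apr2013} we will use the following ingredients: $\cX$ is an ordinary Nakajima extremal curve with $\gg-1=\gamma-1=\textstyle\frac{p-2}{p}|S|$; the two short orbits $\Omega_1,\Omega_2$ both have length $|S|/p$; every point in $\Omega_1\cup\Omega_2$ has stabilizer of order $p$; and the identity is the only element of $S$ fixing every point in $\Omega_1\cup \Omega_2$.

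First, I would observe that if $S$ is abelian and $P,Q\in \Omega_i$ with $Q=g\cdot P$, then $S_Q=gS_Pg^{-1}=S_P$. Hence for $i=1,2$ all points of $\Omega_i$ share a common stabilizer $T_i$ of order $p$. Next, I would show $T_1\neq T_2$: otherwise $T_1=T_2$ would be a subgroup of order $p$ fixing every point of $\Omega_1\cup\Omega_2$, contradicting the second claim of Proposition \ref{propap=3}. In particular $T_1\cap T_2=\{1\}$.

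The key step is to apply the Deuring--Shafarevich formula to the action of $T_1$ on $\cX$. A quick case analysis determines the fixed point set of $T_1$. For $P\in \Omega_1$, the $S$-stabilizer equals $T_1$, so $P$ is fixed; for $P\in \Omega_2$, the $S$-stabilizer is $T_2\neq T_1$, so $(T_1)_P=T_1\cap T_2=\{1\}$ and $T_1$ acts freely on $\Omega_2$; for $P\notin \Omega_1\cup \Omega_2$, the $S$-stabilizer is trivial and $T_1$ acts freely. So the only short orbits of $T_1$ are the $|\Omega_1|=|S|/p$ fixed points in $\Omega_1$. Writing $\gamma'$ for the $p$-rank of $\cX/T_1$, formula \eqref{eq2deuring} becomes
\begin{equation*}
\gamma-1=p(\gamma'-1)+\textstyle\frac{|S|}{p}(p-1).
\end{equation*}
Using $\gamma-1=\textstyle\frac{p-2}{p}|S|$ from \eqref{eq18feb2014}, this simplifies to $\gamma'=1-|S|/p^2$. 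The inequality $\gamma'\ge 0$, together with the standing bound $|S|\ge p^2$ from \eqref{eqcaseii}, forces $|S|=p^2$ (and $\gamma'=0$).

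Finally, I would rule out the cyclic case. Since $\mathbb Z/p^2\mathbb Z$ has a unique subgroup of order $p$, the isomorphism $S\cong \mathbb Z/p^2\mathbb Z$ would force $T_1=T_2$, contradicting the second step. As $S$ is abelian of order $p^2$ and non-cyclic, we conclude $S\cong (\mathbb Z/p\mathbb Z)^2$, that is, $S$ is elementary abelian. No single step in this plan is genuinely difficult; the main point to get right is the case analysis for the fixed locus of $T_1$ in the Deuring--Shafarevich computation, which is what squeezes $|S|$ between $p^2$ and $p^2$.
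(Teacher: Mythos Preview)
Your proof is correct and follows essentially the same route as the paper: both arguments apply the Deuring--Shafarevich formula to the order-$p$ stabilizer $S_P=T_1$ of a point in $\Omega_1$, using that in an abelian group this subgroup fixes all of $\Omega_1$, and then combine the resulting bound with $|S|\ge p^2$ to force $|S|=p^2$. The only cosmetic difference is that you first isolate $T_1\neq T_2$ and thereby get an \emph{equality} in the Deuring--Shafarevich computation (and a one-line cyclic-case argument), whereas the paper works with the inequality $\gamma-1\ge -p+\tfrac{p-1}{p}|S|$ and disposes of the cyclic case by a second Deuring--Shafarevich estimate; both lead to the same conclusion.
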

\begin{proof} Choose a point $P\in \Omega_1$. From Lemma (\ref{eq130apr2013}), $|S_P|=p$. Since $S$ is abelian $S_P$ fixes every point in $\Omega_1$. Let $\gamma^*$ be the $p$-rank of the quotient curve $\cX/S_P$. The Deuring-Shafarevich formula  applied to $S_P$ together with (\ref{eq18feb2014}) give
$$\textstyle\frac{p-2}{p}|S|=\gamma-1\geq -p+\textstyle\frac{p-1}{p}|S|$$
whence $|S|\leq p^2$. Then $|S|=p^2$ by (\ref{eqcaseii}). Assume on the contrary that $S$ is cyclic. For a point $Q\in \Omega_2$ the stabilizer $S_Q$ is a subgroup of $S$ of order $p$. Since $S$ is cyclic, it has only one subgroup of order $p$. Therefore $S_P=S_Q$, and
$$\textstyle\frac{p-2}{p}|S|=\gamma-1\geq -p+2\textstyle\frac{p-1}{p}|S|$$
which implies $|S|\leq p$, a contradiction.
\end{proof}
\begin{proposition}
\label{propcp=3} Let $N$ be a non-trivial normal subgroup of $S.$ Then either $N$ is semiregular on $\cX$, or $N$ has order $\frac{|S|}{p}$ and there is point $P\in \Omega_1\cup \Omega_2$ such that $S=N\rtimes S_P$.
\end{proposition}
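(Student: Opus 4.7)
The plan is to analyze the $N$-orbit structure on the two short $S$-orbits $\Omega_1$ and $\Omega_2$, and then feed this into the Deuring-Shafarevich formula applied to the cover $\cX \to \cX/N$. If $N$ is semiregular there is nothing to prove, so assume some non-identity $g \in N$ fixes a point $P \in \cX$. Because only points of $\Omega_1 \cup \Omega_2$ carry a non-trivial $S$-stabilizer, $P$ lies in one of the two short orbits, say $P \in \Omega_i$. Lemma \ref{eq130apr2013} gives $|S_P| = p$, whence $S_P = \langle g \rangle \le N$. Normality of $N$, combined with the fact that $\Omega_i$ is a single $S$-orbit, then shows via conjugation that $S_{P'} \le N$ for every $P' \in \Omega_i$; in particular each $N$-orbit inside $\Omega_i$ has length $|N|/p$, so $\Omega_i$ splits into exactly $[S:N]$ short $N$-orbits of this common length.

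The next step is to split according to whether $N$ has non-trivial stabilizers on the other short orbit $\Omega_j$; by the same conjugation argument this either happens at every point of $\Omega_j$ or at none. If it happens at both $\Omega_1$ and $\Omega_2$, then $N$ has exactly $2[S:N]$ short orbits, all of length $|N|/p$. Plugging this orbit data into the Deuring-Shafarevich formula (\ref{eq2deuring}) together with $\gamma - 1 = \frac{p-2}{p}|S|$ from (\ref{eq18feb2014}) reduces the formula to
$$\gamma_N \;=\; 1 - [S:N],$$
where $\gamma_N$ is the $p$-rank of $\cX/N$. Since $\gamma_N \ge 0$, this forces $[S:N] = 1$, i.e.\ $N = S$; as the proposition is applied to a proper subgroup, this sub-case is excluded.

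In the remaining sub-case $N$ acts freely on $\Omega_j$, so the only short $N$-orbits are the $[S:N]$ orbits inside $\Omega_i$. The Deuring-Shafarevich formula now collapses to $\gamma_N = 1 - [S:N]/p$; combining non-negativity of $\gamma_N$ with $[S:N] \ge p$ (since $N$ is a proper subgroup of a $p$-group) yields $\gamma_N = 0$ and $[S:N] = p$, hence $|N| = |S|/p$. Finally, for any $Q \in \Omega_j$ the freeness of the $N$-action gives $S_Q \cap N = 1$; combined with $|S_Q| = p = [S:N]$ and normality of $N$, this produces the semidirect decomposition $S = N \rtimes S_Q$, as required. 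The step that requires the most care is the arithmetic bookkeeping: one must pin down the $N$-orbit lengths on $\Omega_1 \cup \Omega_2$ precisely before extracting integrality information from (\ref{eq2deuring}), and ruling out the symmetric ``both orbits'' sub-case is what makes the dichotomy go through.
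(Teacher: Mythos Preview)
Your argument is correct and in fact cleaner than the paper's. Both proofs start the same way: assume $N$ is not semiregular, pick $P\in\Omega_i$ with $S_P\le N$, and use normality plus transitivity of $S$ on $\Omega_i$ to propagate $S_{P'}\le N$ to every $P'\in\Omega_i$. From there the two arguments diverge.

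You feed the resulting orbit data directly into the Deuring--Shafarevich formula for the cover $\cX\to\cX/N$, splitting on whether $N$ has nontrivial stabilizers on one or on both of $\Omega_1,\Omega_2$. Each sub-case collapses to a closed expression for $\gamma_N$ in terms of $[S:N]$, and the constraint $\gamma_N\ge 0$ immediately forces either $N=S$ (ruled out) or $[S:N]=p$. The paper instead passes to the quotient $\bar\cX=\cX/N$, applies the Hurwitz formula to get $\tfrac{p-2}{p}|\bar S|>\bar\gg-1$, then invokes Nakajima's bound on $\bar\cX$ to conclude $\bar\gamma=0$, and finally uses the fixed-point lemma for $p$-groups on $p$-rank-zero curves (\cite[Lemma 11.129]{hirschfeld-korchmaros-torres2008}) to identify a fiber of $\cX\to\bar\cX$ that is a full short $S$-orbit. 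A residual case $\bar\gg=\bar\gamma=1$ is then eliminated by a separate Deuring--Shafarevich computation yielding a $p$-divisibility contradiction.

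Your route is more self-contained: it uses only the Deuring--Shafarevich formula and the already-established orbit structure from Proposition~\ref{propap=3} and Lemma~\ref{eq130apr2013}, avoiding both the appeal to Nakajima's bound on the quotient and the external fixed-point lemma, and it does not require a separate treatment of the elliptic-quotient case. The paper's approach, on the other hand, makes the geometry of the quotient curve more visible (in particular establishing $\bar\gamma=0$ by an independent method), which is thematically consistent with how later propositions are argued. One small remark: your dismissal of the ``both orbits'' sub-case via ``the proposition is applied to a proper subgroup'' is fine, but it is worth observing that this is automatic in your Case~2 anyway, since $N$ acting freely on $\Omega_j$ already forces $N\ne S$.
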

\begin{proof} The assertion trivially holds for $|S|=p^2$ with $S=N\times S_P$. Assume that some non-trivial element in $N$ fixes point $P$. From the Hurwitz genus formula applied to $N$, we have
$\textstyle\frac{p-2}{p}|S|>|N|(\bar{\gg}-1)$ where $\bar{\gg}$ is the genus of the quotient curve $\bar{\cX}=\cX/N$. Let $\bar{S}$ be the automorphism group of $\bar{\cX}$ induced by $S$. Then $|\bar{S}||N|=|S|$ and hence
$\textstyle\frac{p-2}{p}|\bar{S}|>\bar{\gg}-1.$ If $\bar{\gg}\geq 2$, Nakajima's bound (\ref{naka16feb2013}) applied to $\bar{\cX}$ implies that $\bar{\gamma}=0$. From \cite[Lemma 11.129]{hirschfeld-korchmaros-torres2008}, $\bar{S}$ fixes a point $\bar{Q}$ in $\bar{\cX}$. Then the orbit $\mathcal{O}$ of $N$ consisting of all points of $\cX$ lying over $\bar{Q}$ is also an orbit of $S$. Since $\Omega_1$ and $\Omega_2$ are the only short orbits of $S$, this yields that $\mathcal{O}$ coincides with one of them, say $\Omega_1$.
Therefore, $|N|=\textstyle\frac{1}{p}|S|$. The stabilizer $\varepsilon$ of a point $R\in\Omega_2$ on $S$ has order $p$ and $\varepsilon\not\in N$. Therefore $S=N\rtimes \langle\varepsilon\rangle$. This argument also works when $\bar{\gg}\le 1$ and $\bar{\gamma}=0$.
We are left with the case $\bar{\gg}=\bar{\gamma}=1$. Let $\mathcal{O}_1,\ldots,\mathcal{O}_m$ be the short orbits of $N$. Since the stabilizer $N_Q$ of any point $Q\in \mathcal{O}_i$ has order $p$, the Deuring-Shafarevich formula
 applied to $N$ together with (\ref{eq18feb2014}) give
$$\textstyle\frac{p-2}{p}|S|=\textstyle\frac{p-1}{p}|N|m$$
whence $|S|=\textstyle\frac{p-1}{p-2}|N|m$. But this is impossible as both $|S|$ and $|N|$ are powers of $p$.
\end{proof}
\begin{proposition}
\label{propgp=3} The center $Z(S)$ of $S$ is semiregular on $\cX$.
\end{proposition}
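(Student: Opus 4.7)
The approach is by contradiction via Proposition~\ref{propcp=3}. I suppose that some non-identity $\zeta\in Z(S)$ fixes a point $P\in\cX$. Replacing $\zeta$ by a suitable $p$-power, I may assume $\ord(\zeta)=p$, and set $T=\langle\zeta\rangle$. Then $T$ is central in $S$, hence normal, of order $p$, and is not semiregular on $\cX$ (it fixes $P$). Since $|S|\ge p^2$ by \eqref{eqcaseii}, $T$ is a proper subgroup of $S$, so Proposition~\ref{propcp=3} applies. The failure of semi-regularity forces the alternative
$$
|T|=\frac{|S|}{p},\qquad S=T\rtimes S_Q
$$
for some $Q\in\Omega_1\cup\Omega_2$. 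The first equality yields $|S|=p^2$; the second, combined with the centrality of $T$ (so that $T$ commutes with every element of $S_Q$), collapses into a direct product $S=T\times S_Q$. In particular $S$ is abelian, so by Proposition~\ref{propbp=3} we get $S\cong C_p\times C_p$.

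To finish, I would use the last assertion of Proposition~\ref{propap=3}: since $S$ is abelian, the pointwise stabilizer $T_i$ of each short orbit $\Omega_i$ is well defined, and the assertion forces $T_1\ne T_2$, giving $S=T_1\times T_2$; in particular $T$ coincides with one of them, say $T=T_1$. The main obstacle, and the most delicate part of the proof, is to extract a contradiction from this rigid abelian configuration. My plan for this concluding step is to couple the Hurwitz and Deuring--Shafarevich formulas, applied to both $T_1$ and $T_2$ on $\cX$ in the tower $\cX\to\cX/T_1\to\cX/S$, with the Nakajima-extremality identity \eqref{eq18feb2014} and the non-hyperellipticity from Proposition~\ref{prop6febb2013}, to detect a numerical or structural incompatibility; alternatively, one recognises $\cX$ as birational to the Artin--Mumford curve \eqref{artinmumford} and rules out this case by direct analysis of the known automorphism group $(C_p\times C_p)\rtimes D_{p-1}$. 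This final step is the hardest, since the gross numerical invariants (genus $(p-1)^2$, $p$-rank $(p-1)^2$, two short orbits of length $p$ with order-$p$ stabilizers) are exactly those of the Artin--Mumford curve, so any contradiction must draw on finer features of the ramification filtration or on an external rigidity result rather than on the invariants that have already been used.
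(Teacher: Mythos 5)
Your reduction is correct and, up to the point where you reach $S\cong C_p\times C_p$, it is essentially the paper's own argument: the paper applies Proposition~\ref{propcp=3} directly to $N=Z(S)$ rather than to a cyclic central subgroup of order $p$, uses centrality to turn the semidirect product into a direct one, concludes that $S$ is abelian, and then invokes Proposition~\ref{propbp=3}. Your variant (passing to $T=\langle\zeta\rangle$ and reading off $|S|=p^2$ from $|T|=|S|/p$) is a legitimate and even slightly cleaner way to land in the same place.

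The genuine gap is your concluding step, and it cannot be closed in the form you propose. The configuration you are left with --- $S\cong C_p\times C_p$ acting on an ordinary curve of genus $(p-1)^2$ with two short orbits of length $p$ and $S=T_1\times T_2$ --- is \emph{realized} by the Artin--Mumford curve (\ref{artinmumford}): that curve satisfies (\ref{hyp}), is a Nakajima extremal curve with $|S|=p^2$, and has $Z(S)=S$ visibly not semiregular. So no refinement of the ramification filtration, and no rigidity theorem about $\aut(\cM_c)$, will yield a contradiction; the statement is simply false for abelian $S$ of order $p^2$. The proposition must be read with the implicit hypothesis that $S$ is non-abelian, equivalently (by Proposition~\ref{propbp=3}) that $|S|\ge p^3$ --- which is the only regime in which the paper later uses it (e.g.\ in Propositions~\ref{a31ago2013} and~\ref{lemagosto10A}). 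Once that hypothesis is in force, your own deduction $|S|=p^2$ \emph{is} the desired contradiction, and the entire final paragraph of your plan becomes unnecessary. The same caveat applies to the paper's two-line proof, whose appeal to Proposition~\ref{propbp=3} is a contradiction only because $Z(S)=S$ is incompatible with $|Z(S)|=|S|/p$, i.e.\ only when $Z(S)$ is proper.
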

\begin{proof} Since $Z(S)$ is a normal subgroup of $S$, Proposition \ref{propcp=3} applies to $Z(S)$. The case $S=Z(S) \rtimes S_P$ cannot actually occur since this semidirect product would be direct and $S$ would be abelian contradicting Proposition \ref{propbp=3}.
\end{proof}
\begin{proposition}
\label{a30ag2013} Let $N$ be a non-trivial normal subgroup of $S$ such that $|N|\le \frac{1}{p^2}|S|$. Then the quotient curve $\bar{\cX}=\cX/N$ with $\bar{S}=S/N$ and $\gg(\bar{\cX})-1=(\gg-1)/|N|$ satisfies the hypotheses of Theorem \ref{princ} but does not have the property given in either {\rm(i)} or {\rm(ii)} of Theorem \ref{princ}. In particular, if $\cX$ is a Nakajima extremal curve then $\bar{\cX}$ is also a Nakajima extremal curve.
\end{proposition}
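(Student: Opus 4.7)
The plan is to exploit the previously established structural results for $S$ on $\cX$ in order to transfer them down to $\bar{S}$ on $\bar{\cX}$. The starting point is that by hypothesis $|N|\le |S|/p^2<|S|/p$, so the dichotomy in Proposition \ref{propcp=3} forces $N$ to act \emph{semiregularly} on $\cX$. Once this is known, the cover $\cX\to\bar{\cX}$ is unramified, so the Hurwitz genus formula applied to $N$ immediately gives
\[
2\gg-2=|N|(2\gg(\bar{\cX})-2),
\]
that is, $\gg(\bar{\cX})-1=(\gg-1)/|N|$, and the Deuring-Shafarevich formula (\ref{eq2deuring}) applied to $N$ (with no short orbits) yields similarly
\[
\gamma(\bar{\cX})-1=(\gamma-1)/|N|.
\]
In particular $\bar{\cX}$ is ordinary whenever $\cX$ is, and $|\bar{S}|=|S|/|N|$.

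Next, I will verify that $\bar{\cX}$ together with $\bar{S}$ satisfies the hypotheses of Theorem \ref{princ}. From Proposition \ref{propap=3} we have $\gg-1=\tfrac{p-2}{p}|S|$, so using $|N|\le |S|/p^2$,
\[
\gg(\bar{\cX})-1=\tfrac{\gg-1}{|N|}\ge \tfrac{p-2}{p}\cdot p^2=p(p-2)\ge 3,
\]
so $\gg(\bar{\cX})\ge 2$. Moreover, dividing the numerator and denominator of the ratio in (\ref{hyp}) by $|N|$ gives
\[
\frac{|\bar{S}|}{\gg(\bar{\cX})-1}=\frac{|S|}{\gg-1}>\frac{p^2}{p^2-p-1},
\]
so the numerical hypothesis of Theorem \ref{princ} is inherited by $\bar{\cX}$.

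It remains to rule out conclusions (i) and (ii) for $\bar{\cX}$. For (i), note that the identity $\gamma(\bar{\cX})-1=(\gamma-1)/|N|$ together with the Nakajima extremality $\gamma-1=\tfrac{p-2}{p}|S|$ and $|N|\le |S|/p^2$ yields $\gamma(\bar{\cX})\ge 1+p(p-2)\ge 4$, so $\gamma(\bar{\cX})\ne 0$. For (ii), $|\bar{S}|=|S|/|N|\ge p^2>p$, so $\bar{\cX}$ cannot be the exceptional curve with $|\bar{S}|=p$. Thus $\bar{\cX}$ with $\bar{S}$ meets the hypotheses of Theorem \ref{princ} while avoiding cases (i) and (ii). For the final ``in particular'' statement, if $\cX$ attains the Nakajima bound then $|S|/(\gg-1)=p/(p-2)$, and dividing by $|N|$ as above shows that $|\bar{S}|/(\gg(\bar{\cX})-1)=p/(p-2)$, so $\bar{\cX}$ is a Nakajima extremal curve as well.

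The only substantive step is the initial appeal to Proposition \ref{propcp=3} that pins down semiregularity of $N$; once that is in hand, the rest of the argument is a direct book-keeping of the Hurwitz and Deuring-Shafarevich formulas combined with the Nakajima equality (\ref{eq18feb2014}) secured by Proposition \ref{propap=3}, so no serious obstacle is expected.
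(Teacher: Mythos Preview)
Your proof is correct and follows essentially the same approach as the paper: the key step is invoking Proposition \ref{propcp=3} to conclude that $N$ is semiregular (since $|N|<|S|/p$ excludes the second alternative there), after which the Hurwitz and Deuring--Shafarevich formulas transfer the relevant invariants to $\bar{\cX}$. The paper's own proof is considerably terser---it does not spell out the verification that $\gg(\bar{\cX})\ge 2$ or that (\ref{hyp}) is inherited, and rules out (i) simply by ``$\bar{\gamma}\ne 0$'' and (ii) by ``$|\bar{S}|>p$''---but your more explicit bookkeeping is entirely in the same spirit and fills in exactly the details the paper leaves to the reader.
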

\begin{proof} By Proposition \ref{propcp=3}, the extension $\K(\cX)|\K(\bar{\cX})$ is an unramified $p$-extension with Galois group $N$. Therefore, the Hurwitz formula applied to $N$ gives that
$\gg-1=|N|(\gg(\bar \cX)-1)$. In Theorem \ref{princ} referred to $\bar{\cX}$ and $\bar{S}$, case (i) is impossible by $\bar{\gamma}\neq 0$, while case (ii) cannot occur since $|\bar{S}|>p$.
\end{proof}
Since the center of any $p$-group is non-trivial, a straightforward inductive argument on $|S|$ depending on Proposition \ref{a30ag2013} gives the following result.
\begin{proposition}
\label{pro12dic} If there exists a curve $\cX$ which satisfies the hypothesis of Theorem \ref{princ} for $|S|=p^k$ but does not have the properties {\rm(i)} and {\rm(ii)}, then for any $1<j <k$ the curve $\cX$ has a quotient curve $\bar{\cX}$ which satisfies the hypothesis of Theorem \ref{princ} for $|\bar{S}|=p^j$ but has none of the properties {\rm(i)} and {\rm(ii)}. 
\end{proposition}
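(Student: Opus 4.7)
The plan is to use induction on $k$, noting that the statement is vacuous unless $k\geq 3$, and that the entire content is packaged inside Proposition \ref{a30ag2013} once we can locate a suitable small normal subgroup of $S$ at each stage.

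First I would produce, for any $\cX$ satisfying the hypothesis of Theorem \ref{princ} with $|S|=p^k$ (and failing both (i) and (ii)), a single quotient curve $\bar{\cX}$ whose associated group has order exactly $p^{k-1}$ and still meets the hypothesis while failing (i) and (ii). Since $S$ is a non-trivial $p$-group, its center $Z(S)$ is non-trivial, so it contains a subgroup $N$ of order $p$; such an $N$ is automatically normal in $S$. The numerical requirement $|N|\leq \textstyle\frac{1}{p^2}|S|$ of Proposition \ref{a30ag2013} becomes $p\leq p^{k-2}$, which holds precisely because $k\geq 3$. Applying Proposition \ref{a30ag2013} to this $N$ then yields $\bar{\cX}=\cX/N$ with $|\bar S|=p^{k-1}$, satisfying the hypothesis of Theorem \ref{princ} and having neither property (i) nor (ii).

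For $j=k-1$ this already provides the quotient we need. For the remaining range $1<j<k-1$, I would invoke the inductive hypothesis applied to the pair $(\bar{\cX},\bar{S})$ of exponent $k-1\geq 3$: it gives a further quotient of $\bar{\cX}$ whose associated group has order $p^j$, still satisfying the hypothesis and failing (i) and (ii). Any such quotient of $\bar{\cX}$ is automatically a quotient of $\cX$, corresponding to a normal subgroup of $S$ strictly containing $N$. Thus all values $1<j<k$ are covered.

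The one potential obstacle is simply the verification that the condition $|N|\leq \textstyle\frac{1}{p^2}|S|$ of Proposition \ref{a30ag2013} can actually be met at every step; but since we may always take $N$ of order $p$ inside $Z(S)$ and the running size $p^{k}$ stays at least $p^3$ throughout the induction, this never fails. Everything else reduces to transitivity of the quotient construction, so the argument is purely a short induction on $k$.
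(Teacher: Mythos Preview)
Your proof is correct and follows essentially the same route as the paper: the paper merely states that a straightforward induction on $|S|$, using the non-triviality of $Z(S)$ together with Proposition~\ref{a30ag2013}, gives the result, and this is precisely what you carry out in detail. One minor wording issue: when $k=3$ you do not actually have $k-1\ge 3$, but since the range $1<j<k-1$ is then empty this does not affect the argument.
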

A corollary of Propositions \ref{propcp=3} and \ref{a30ag2013} is stated in the following proposition.
\begin{proposition}
\label{propdp=3} Let $N$ be a non-trivial normal subgroup of $S$. If the factor group $S/N$ is abelian then either $|N|=\textstyle\frac{1}{p}|S|$ or $|N|=\textstyle\frac{1}{p^2}|S|$, and in the latter case, $S/N$ is an elementary abelian group.
\end{proposition}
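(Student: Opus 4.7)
The plan is a direct synthesis of the three structural propositions already established for normal subgroups of $S$, namely Propositions \ref{propcp=3}, \ref{a30ag2013}, and \ref{propbp=3}, organised according to the index $[S:N]$.

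I would first apply Proposition \ref{propcp=3} to the proper non-trivial normal subgroup $N$. The resulting dichotomy is that either $|N|=\frac{1}{p}|S|$, in which case the first alternative of the statement holds and nothing more is required, or $N$ is semiregular on $\cX$. If we are in the latter branch one may further assume $|N|\le\frac{1}{p^2}|S|$, since the index-$p$ case has just been treated. Under this assumption Proposition \ref{a30ag2013} applies and tells us that the quotient pair $\bar{\cX}=\cX/N$, $\bar{S}=S/N$ still satisfies the hypothesis of Theorem \ref{princ} but neither of its conclusions (i) and (ii); in particular $|\bar{S}|\ge p^2$ and $\bar{\cX}$ is itself a Nakajima extremal curve. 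At this point, since $\bar{S}$ is abelian by the standing hypothesis that $S/N$ is abelian, Proposition \ref{propbp=3} applied to $\bar{S}$ acting on $\bar{\cX}$ forces $|\bar{S}|=p^2$ with $\bar{S}$ elementary abelian, which on pullback gives $|N|=\frac{1}{p^2}|S|$ and $S/N$ elementary abelian, i.e.\ the second alternative in the statement.

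The proof is therefore a short bookkeeping argument, and the only genuinely delicate point to verify is the applicability of Proposition \ref{a30ag2013}: its proof tacitly uses that $\K(\cX)|\K(\bar{\cX})$ is unramified, which requires $N$ to be semiregular. But Proposition \ref{propcp=3} guarantees that the only non-semiregular possibility is the one with $|N|=\frac{1}{p}|S|$, and that case has already been excluded when we enter the second branch with $|N|\le\frac{1}{p^2}|S|$. Thus no further ramification-theoretic input is needed, and the argument closes as sketched.
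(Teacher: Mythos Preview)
Your argument is correct and follows essentially the same route as the paper, which simply records the proposition as a corollary of Propositions \ref{propcp=3} and \ref{a30ag2013}. Your explicit invocation of Proposition \ref{propbp=3} on the quotient $\bar{S}=S/N$ acting on $\bar{\cX}$ is exactly what the paper intends: once Proposition \ref{a30ag2013} guarantees that $(\bar{\cX},\bar{S})$ again satisfies the standing hypotheses of Section \ref{princip} (and is not in case (i) or (ii)), all earlier propositions of that section, including \ref{propbp=3}, apply to the quotient. One small remark: your preliminary appeal to Proposition \ref{propcp=3} to secure semiregularity of $N$ is already built into the proof of Proposition \ref{a30ag2013} itself (which invokes \ref{propcp=3} under the assumption $|N|\le\frac{1}{p^2}|S|$), so that step, while harmless, is redundant.
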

Proposition \ref{propdp=3} together with classical results from Group theory give some useful results on $S$.
\begin{proposition}
\label{30ag2013} Let $\Phi(S)$ and $S'$ be the Frattini subgroup and the commutator subgroup of $S$, respectively. Then the following hold.
\begin{itemize}
\item[\rm(i)] $\Phi(S)=S'$.
\item[\rm(ii)] $|\Phi(S)|=\textstyle\frac{1}{p^2}|S|$.
\item[\rm(iii)] $S$ contains exactly $p+1$ maximal subgroups, each being a normal subgroup of $S$ of index $p$.
\item[\rm(iv)] Exactly two of the $p+1$ maximal subgroups of $S$ are not semiregular on $\cX$.
\item[\rm(v)] Two elements of $S$ of order $p$, one fixing a point in $\Omega_1$ and the other in $\Omega_2$, always generate $S$.
\end{itemize}
\end{proposition}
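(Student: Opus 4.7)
Parts (i) and (ii) follow by applying Proposition \ref{propdp=3} to the commutator subgroup $N=S'$: since $S/S'$ is abelian, $|S'|\in\{|S|/p,|S|/p^2\}$. The alternative $|S'|=|S|/p$ would make $S/S'$ cyclic of order $p$, hence $d(S)=1$ by the Burnside basis theorem, i.e.\ $S$ cyclic, which contradicts Proposition \ref{propbp=3} together with $|S|\geq p^2$. Thus $|S'|=|S|/p^2$ and $S/S'$ is elementary abelian; the latter forces $S^p\subseteq S'$ and hence $\Phi(S)=S'S^p=S'$. Claim (iii) is then immediate: $S/\Phi(S)\cong \mathbb F_p^2$ has exactly $(p^2-1)/(p-1)=p+1$ hyperplanes, each corresponding to a maximal subgroup of $S$, necessarily normal of index $p$.

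\medskip
For (iv) and (v) I would first verify that $\Phi(S)$ acts semiregularly on $\cX$. If $|S|=p^2$ this is vacuous, while for $|S|>p^2$ the subgroup $\Phi(S)$ is a non-trivial normal subgroup of index $p^2\neq p$, so Proposition \ref{propcp=3} excludes the non-semiregular alternative and leaves $\Phi(S)$ semiregular. Consequently, for every $P\in\Omega_1\cup\Omega_2$ the order-$p$ stabilizer $S_P$ intersects $\Phi(S)$ trivially, and its image in $S/\Phi(S)$ is a $1$-dimensional $\mathbb F_p$-subspace. Because $S/\Phi(S)$ is abelian, conjugate stabilizers produce the same image, so this subspace depends only on the orbit; I denote it by $V_i$ for $P\in\Omega_i$, $i=1,2$.

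\medskip
The crux of the argument, and the main obstacle, is to show $V_1\neq V_2$. Suppose for contradiction that $V_1=V_2$. Then the unique maximal subgroup $M$ of $S$ whose image in $S/\Phi(S)$ equals this common line contains $S_P$ for every $P\in\Omega_1\cup\Omega_2$, so $M$ is not semiregular. Proposition \ref{propcp=3} then forces $|M|=|S|/p$ (which indeed holds) and $S=M\rtimes S_Q$ for some $Q\in\Omega_1\cup\Omega_2$, implying $S_Q\not\subseteq M$; this contradicts the fact that $M$ already contains every stabilizer of points in $\Omega_1\cup\Omega_2$. Hence $V_1\neq V_2$.

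\medskip
Claim (iv) follows at once: a maximal subgroup is non-semiregular iff its hyperplane coincides with $V_1$ or $V_2$, giving exactly two such subgroups. For (v), if $\sigma\in S_{P_1}\setminus\{1\}$ and $\tau\in S_{P_2}\setminus\{1\}$ are order-$p$ elements stabilizing points of the two short orbits, their images in $S/\Phi(S)$ span $V_1+V_2=S/\Phi(S)$; by the Burnside basis theorem, $\sigma$ and $\tau$ generate $S$.
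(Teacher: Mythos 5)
Your proof is correct and follows the same overall strategy as the paper: parts (i)--(iii) come from Proposition \ref{propdp=3} together with the Burnside basis theorem, and parts (iv)--(v) are read off from the positions of the point stabilizers inside the $2$-dimensional $\mathbb{F}_p$-space $S/\Phi(S)$. The one place where you genuinely diverge is the key step that the two lines $V_1$ and $V_2$ (equivalently, the maximal subgroups containing the stabilizers of points of $\Omega_1$ and of $\Omega_2$) are distinct. The paper deduces this from the last claim of Proposition \ref{propap=3}, namely that no non-trivial element of $S$ fixes $\Omega_1\cup\Omega_2$ pointwise; you instead derive it from Proposition \ref{propcp=3}: a non-semiregular maximal normal subgroup must complement some point stabilizer $S_Q$ with $Q\in\Omega_1\cup\Omega_2$, which is incompatible with its containing every such stabilizer. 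Your route is self-contained for arbitrary $|S|$ (the paper's appeal to Proposition \ref{propap=3} is transparent mainly when $|S|=p^2$, where each $M_i$ coincides with a stabilizer and abelianness makes it fix $\Omega_i$ pointwise), and you also make explicit an intermediate fact the paper leaves implicit: $\Phi(S)$ is semiregular, hence $S_P\cap\Phi(S)=1$ and the stabilizers genuinely project onto lines of $S/\Phi(S)$ rather than collapsing into the Frattini subgroup. Both arguments ultimately rest on the same structural input, so the gain is one of rigor and clarity rather than of substance.
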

\begin{proof} From Proposition \ref{propdp=3}, either $|\Phi(S)|=\frac{1}{p}|S|$, or  $|\Phi(S)|=\frac{1}{p^2}|S|$. In the former case, $S$ is cyclic by \cite[Hilfssatz 7.1.b]{huppertI1967}  but this contradicts Proposition \ref{propbp=3}. Therefore, (ii) holds.  Since $S/\Phi(S)$ is (elementary) abelian, $\Phi(S)$ contains $S'$. Hence, Proposition \ref{propdp=3} yields  (i).
Let $\varphi$ be the natural homomorphism $S\mapsto S/\Phi(S)$. Since every maximal subgroup of $S$ contains $\Phi(S)$, there is a one-to-one correspondence between the maximal subgroups of $S$ and the subgroups of $S/\Phi(S)$. By (ii), $S/\Phi(S)$ is an elementary abelian group of order $p^2$ which have exactly $p+1$ proper subgroups. Therefore there are exactly $p+1$ maximal subgroups in $S$. Also, the subgroups of $S/\Phi(S)$ are normal, and hence each of the $p+1$ maximal subgroups of $S$ is normal, as well. Furthermore, the $p+1$ maximal subgroups of $S/\Phi(S)$ partition the set of non-trivial elements of $S/\Phi(S)$. Hence every element of $S\setminus\Phi(S)$ belongs to exactly one of the $p+1$ maximal subgroups of $S$. Take a point $P\in \Omega_1$, and let $M_1$ be the maximal subgroup of $S$ containing $S_P$. Since $M$ is a normal subgroup of $S$ and $\Omega_1$ is an $S$-orbit, this yields that $M$ contains $S_Q$ for every $Q\in \Omega_1$.  Repeating the above argument for a point in $\Omega_2$ shows that a maximal normal subgroup contains the stabilizer of each point in $\Omega_2$. From the last claim of Proposition \ref{propap=3}, these two maximal subgroups are distinct. Therefore, the remaining $p-1$ maximal subgroups are  semiregular on $\cX$.

Finally, (i) together with the Burnside fundamental theorem, \cite[Chapter III, Satz 3.15]{huppertI1967} imply that $S$ can be generated by two elements. Here any two non trivial elements from different maximal subgroups of $S$ generate $S$. Since some element $g_1$ of order $p$ fixes a point $\Omega_1$, and the same holds for some element $g_2$ fixing a point of $\Omega_2$ where $g_1,g_2$ are in two distinct maximal subgroups of $S$, it turns out that $S=\langle g_1,g_2\rangle$.
\end{proof}
{}From now on, the following notation is used: For $i=1,2,$ $M_i$ denotes the maximal normal subgroup of $S$ containing the stabilizer of a point of $\Omega_i$ while $M_3,\ldots, M_{p+1}$ stand for the semiregular maximal subgroups of $S$, respectively.
\begin{proposition}
\label{b30ago2013} Every normal subgroup of $S$ whose order is at most $\frac{1}{p^2}|S|$ is contained in $\Phi(S)$.
\end{proposition}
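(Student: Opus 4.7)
My plan is to pass to the quotient curve $\bar{\cX} = \cX/N$ and exploit the fact, already proved in Proposition \ref{30ag2013}(ii), that under our standing hypotheses the Frattini subgroup has index $p^2$ in $S$. Let $N$ be a nontrivial normal subgroup of $S$ with $|N| \le \frac{1}{p^2}|S|$ (the trivial case is immediate). The bound on $|N|$ rules out the second alternative in Proposition \ref{propcp=3}, so $N$ must act semiregularly on $\cX$.

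Since $N$ is normal and semiregular, Proposition \ref{a30ag2013} applies: $\bar{\cX} = \cX/N$ together with the induced action of $\bar{S} = S/N$ satisfies the hypotheses of Theorem \ref{princ} and avoids both exceptional cases (i) and (ii). Consequently every structural result already proved under those standing hypotheses transfers to $\bar{S}$ acting on $\bar{\cX}$; in particular Proposition \ref{30ag2013}(ii) yields
\[
|\Phi(\bar{S})| = \textstyle\frac{1}{p^2}|\bar{S}|.
\]

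To conclude I would use the standard identification $\Phi(S/N) = \Phi(S)N/N$, valid for any $p$-group $S$ because $\Phi(S) = S^p[S,S]$ and this description is preserved by quotients. Rewriting the displayed equality via this identification gives $|\Phi(S)N| = \frac{1}{p^2}|S|$, and by Proposition \ref{30ag2013}(ii) applied to $S$ itself the right-hand side equals $|\Phi(S)|$; since $\Phi(S) \subseteq \Phi(S)N$ always, this forces $N \subseteq \Phi(S)$. The main point that needs care is the absence of circularity: Proposition \ref{30ag2013}(ii) is invoked for $\bar{S}$ rather than for $S$, and it was proved unconditionally from the hypotheses of Theorem \ref{princ}, which via Proposition \ref{a30ag2013} are inherited by $\bar{\cX}$. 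No additional induction on $|S|$ is required beyond what is already packaged in Proposition \ref{pro12dic}.
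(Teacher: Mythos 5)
Your proposal is correct and takes essentially the same route as the paper: both pass to $\bar{\cX}=\cX/N$, invoke Proposition \ref{a30ag2013} so that Proposition \ref{30ag2013}(ii) gives $|\Phi(S/N)|=\frac{1}{p^2}|S|/|N|$, and then compare with $|\Phi(S)|=\frac{1}{p^2}|S|$ via the relation between $\Phi(S)N/N$ and $\Phi(S/N)$. The only cosmetic difference is that you use the equality $\Phi(S/N)=\Phi(S)N/N$ (valid for $p$-groups), whereas the paper only needs the inclusion $\Phi(S)N/N\subseteq\Phi(S/N)$ from Huppert; either suffices.
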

\begin{proof} Let $N$ be a normal subgroup of $S$. {}From \cite[Chapter III, Hilfssatz 3.4.a]{huppertI1967}, $\Phi(S)N/N$ is a subgroup of $\Phi(S/N)$. From Propositions \ref{a30ag2013} and Proposition \ref{30ag2013} applied to $\bar{\cX}=\cX/N$, we have $|\Phi(S/N)|=\frac{1}{p^2}|S|/|N|$. Since $\Phi(S)/(\Phi(S)\cap N) \cong \Phi(S)N/N$, this yields $|N|\le |\Phi(S)\cap N|$. Therefore, if $|N|\leq |\Phi(S)|$ then $N$ is contained in $\Phi(S)$.
\end{proof}
\begin{proposition}
\label{pro27luglio2014} For $i=1,2$, the quotient curve $\bar{\cX}=\cX/M_i$ is rational.
\end{proposition}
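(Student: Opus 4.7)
The plan is to compute the genus $\bar{\gg}$ of $\cX/M_i$ via the Hurwitz formula, after pinning down the $M_i$-orbit structure on $\cX$; the Deuring--Shafarevich formula also gives $\bar\gamma=0$ as a sanity check. Write $|M_i|=|S|/p$ throughout.

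\emph{Orbit structure.} By Proposition \ref{30ag2013}(iv), $M_1$ and $M_2$ are the only two non-semiregular maximal subgroups of $S$, and each $M_i$ contains $S_R$ for every $R\in\Omega_i$. For $P\in\Omega_i$ we therefore have $(M_i)_P=S_P$ of order $p$, so the $M_i$-orbit of $P$ has length $|M_i|/p$, and $\Omega_i$ decomposes into exactly $p$ such $M_i$-orbits. For $Q\in\Omega_{3-i}$ I claim $(M_i)_Q=\{1\}$: the stabilizer $S_Q$ cannot lie in $\Phi(S)$, for otherwise it would be contained in all $p+1$ maximal subgroups and every one of them would be non-semiregular, contradicting (iv) of Proposition \ref{30ag2013}; hence $S_Q$ lies in a unique maximal subgroup, which is $M_{3-i}$, so $S_Q\cap M_i=\{1\}$. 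Points outside $\Omega_1\cup\Omega_2$ have trivial $S$-stabilizer, hence trivial $M_i$-stabilizer. Conclusion: $M_i$ has exactly $p$ short orbits on $\cX$, all contained in $\Omega_i$, each of length $|M_i|/p$ with cyclic stabilizer of order $p$.

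\emph{Ramification filtration.} By Proposition \ref{propap=3}, $\cX$ is ordinary and $k_1=k_2=0$, i.e.\ $S_P^{(2)}=\{1\}$ at every $P\in\Omega_1\cup\Omega_2$. Since $(M_i)_P^{(\nu)}=M_i\cap S_P^{(\nu)}$ and $(M_i)_P=S_P$, the filtration for $M_i$ at a ramified point $P$ is simply $(M_i)_P=(M_i)_P^{(1)}$ of order $p$ and $(M_i)_P^{(\nu)}=\{1\}$ for $\nu\geq 2$. Hence $d_P=2(p-1)$ at each of the $|\Omega_i|=|M_i|$ ramified points of the cover.

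\emph{Genus computation.} The Hurwitz formula for $\cX\to\cX/M_i$ becomes
\[
2\gg-2 \;=\; |M_i|(2\bar{\gg}-2) \;+\; 2(p-1)|M_i|.
\]
Substituting $2\gg-2=\tfrac{2(p-2)}{p}|S|=2(p-2)|M_i|$ from (\ref{eq18feb2014}) gives $2(p-2)=2\bar{\gg}-2+2(p-1)$, whence $\bar{\gg}=0$, i.e.\ $\cX/M_i$ is rational.

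\emph{Main obstacle.} The arithmetic is trivial once the orbit structure is known; the one nontrivial step is ruling out that $(M_i)_Q$ is nontrivial for $Q\in\Omega_{3-i}$. This uses the Frattini-theoretic fact from Proposition \ref{30ag2013} that every non-identity element outside $\Phi(S)$ lies in a unique maximal subgroup, combined with the distinctness $M_1\neq M_2$ coming from the last claim of Proposition \ref{propap=3}; everything else is bookkeeping with Hurwitz.
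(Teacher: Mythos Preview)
Your proof is correct, but the paper's argument is shorter: it avoids your ``main obstacle'' entirely by using an inequality rather than an exact Hurwitz computation. The paper simply observes that every point of $\Omega_i$ is fixed by some element of $M_i$ of order $p$, hence contributes at least $2(p-1)$ to the different; since $|\Omega_i|=|S|/p=|M_i|$, the Hurwitz formula gives
\[
\gg-1 \;\geq\; |M_i|(\bar\gg-1)+|M_i|(p-1),
\]
i.e.\ $(p-2)\geq(\bar\gg-1)+(p-1)$, forcing $\bar\gg\leq 0$. No information about the $M_i$-action on $\Omega_{3-i}$, nor the exact vanishing of $S_P^{(2)}$, is required.

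What your approach buys is an exact orbit picture of $M_i$ on $\cX$ (exactly $p$ short orbits, all inside $\Omega_i$), which is a nice byproduct; but for the bare statement $\bar\gg=0$ the inequality suffices, and the step you flagged as the only nontrivial one turns out to be unnecessary.
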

\begin{proof} Every point in $\Omega_i$ is fixed by an element of $M_i$ order $p$. From the Hurwitz genus formula applied to $M_i$,
$$\textstyle\frac{p-2}{p}|S|\geq\textstyle\frac{|S|}{p}(\bar{\gg}-1)+\textstyle\frac{|S|}{p}(p-1)$$
where $\bar{\gg}$ is the genus of the quotient curve $\bar{\cX}=\cX/M_i$. This yields $\bar{\gg}=0$.
\end{proof}
\begin{proposition}
\label{pro18feb2014} For $3\le i \le p+1$, the quotient curve $\bar{\cX}=\cX/M_i$ is a curve given in (ii) of Theorem \ref{princ}, and the extension $\K(\cX)|\K(\bar{\cX})$ is an unramified $p$-extension with Galois group isomorphic to $M_i$.
\end{proposition}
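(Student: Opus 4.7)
The plan is to combine the semiregularity of $M_i$ with the Hurwitz and Deuring--Shafarevich formulas. By Proposition \ref{30ag2013}(iv), $M_i$ is semiregular on $\cX$, so no non-trivial element of $M_i$ fixes a point of $\cX$, and the cover $\cX \to \bar{\cX} = \cX/M_i$ is unramified. By construction, $\K(\cX)|\K(\bar{\cX})$ is a Galois extension with group $M_i$, which is a $p$-group of order $|S|/p$; this already establishes the second assertion of the proposition.

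Next, I would compute $\bar{\gg}:=\gg(\bar{\cX})$ from the Hurwitz genus formula \eqref{eq1} applied to the unramified cover of degree $|M_i|=|S|/p$:
\[
2\gg - 2 \;=\; |M_i|\,(2\bar{\gg} - 2).
\]
Substituting $\gg - 1 = \tfrac{p-2}{p}|S|$ from \eqref{eq18feb2014} yields $\bar{\gg}=p-1$. The Deuring--Shafarevich formula \eqref{eq2deuring}, applied to the same unramified extension, gives $\gamma - 1 = |M_i|(\bar{\gamma}-1)$; since $\cX$ is ordinary by Proposition \ref{propap=3} we have $\gamma=\gg$, and an identical computation yields $\bar{\gamma}=p-1=\bar{\gg}$. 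Hence $\bar{\cX}$ is an ordinary curve of genus $p-1$.

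To conclude that $\bar{\cX}$ falls under case (ii) of Theorem \ref{princ}, it remains to exhibit an automorphism subgroup of order $p$. The quotient $\bar{S}:=S/M_i$ has order $p$ and acts on $\bar{\cX}$, and I expect the one genuine subtlety of the proof to be verifying that this action is faithful. The standard argument goes as follows: for any point $P\in\cX$ lying outside the two short orbits $\Omega_1\cup\Omega_2$ the stabilizer $S_P$ is trivial, and any $s\in S$ inducing the identity on $\bar{\cX}$ satisfies $s(P)\in M_i\cdot P$, forcing $s\in M_i\cdot S_P=M_i$. Since such points $P$ exist in abundance, the kernel of the action of $S$ on $\bar{\cX}$ is exactly $M_i$, so $\bar{S}$ acts faithfully. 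Thus $\bar{\cX}$ is an ordinary curve of genus $p-1$ whose $\K$-automorphism group contains a subgroup of order $p$, matching case (ii) of Theorem \ref{princ} exactly.
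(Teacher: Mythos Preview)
Your proof is correct and follows essentially the same approach as the paper's: both use the semiregularity of $M_i$ (Proposition~\ref{30ag2013}(iv)) to conclude that the extension is unramified, and then apply the Hurwitz and Deuring--Shafarevich formulas together with \eqref{eq18feb2014} to obtain $\bar{\gg}-1=\bar{\gamma}-1=p-2$. Your final paragraph verifying faithfulness of the $S/M_i$-action is an unnecessary precaution that the paper takes for granted: since $M_i\trianglelefteq S$ and $\K(\bar{\cX})=\K(\cX)^{M_i}$, Galois theory gives immediately that the kernel of the $S$-action on $\K(\bar{\cX})$ is exactly $M_i$, so $\bar S=S/M_i$ embeds in $\aut(\bar{\cX})$ as a subgroup of order $p$.
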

\begin{proof} Since $M_i$ is semiregular on $\cX$, the extension $\K(\cX)|\K(\bar{\cX})$ is unramified. Furthermore, since $M_i$ is a subgroup of $S$ of index $p$, (\ref{eq18feb2014}) together with the Hurwitz and the Deuring-Shafarevich formulas give $\bar{\gg}-1=\bar{\gamma}-1=p-2$ where $\bar{\gg}$ is the genus and $\bar{\gamma}$ is the $p$-rank of $\bar{\cX}$.
\end{proof}
\begin{rem}
\label{14mag2014} {\em{ From Propositions \ref{pro18feb2014} and \ref{shafart}(i), every minimal generator set of $M_i$ with $3\le i \le p+1$ has size at least $2$ and at most $p-1$. We will show curves attaining this bound $p-1$.}}
\end{rem}

Theorem \ref{princ} follows from Lemmas \ref{lem17Afeb2014} and \ref{lem17feb2014}} together with Propositions \ref{propap=3}, \ref{pro12dic} and \ref{pro18feb2014}.

For the rest of the paper, $\cX$ always denotes an extremal Nakajima curve. Also, we keep our notation and terminology adopted in Section \ref{princip}. In particular,
$\gg=\gg(\cX)=(p-2)p^{n-1}+1$ and $S$ is a Sylow subgroup of $\aut(\cX)$ of order $p^n$ with its subgroups $M_1,M_2,\ldots,M_{p+1}$ of index $p$.

\section{Infinite Family of Examples}
\label{inffam}
Let $\bar{\cX}$ be a general curve of genus $p-1$ defined in Remark \ref{rem1ago2014} with function field $F=\mathbb{K}(\bar{\cX})=\mathbb{K}(x,y)$ where
\begin{equation}
\label{eq2dic10} x(y^p-y)-ax^2-1=0,\,\,\,a\in \mathbb{K}^*.
\end{equation}
For a positive integer $N$, let $F_N$ be the largest unramified abelian
extension of $F$ of exponent $N$; that is, $F_N|F$ has the following three properties:
\begin{itemize}
\item[(i)] $F_N|F$ is an unramified Galois extension;
\item[(ii)] $F_N$ is generated by all function fields which are cyclic unramified extensions of $F$ of degree $p^N$,
\item[(iii)] ${\rm{Gal}}(F_N|F)$ is abelian and  $u^{p^N}=1$ for every element $u\in {\rm{Gal}}(F_N|F)$.
\end{itemize}
{}From classical results  due to Schmid and Witt \cite{schmidwitt1938}, we have that $\deg(F_N|F)=p^{(p-1)N}$ and that ${\rm{Gal}}(F_N|F)$ is the direct product of $p-1$ copies of the cyclic group of order $p^N$.
Let $\cX$ be the curve such that $F_N=\mathbb{K}(\cX)$. Since $F_N$ is an unramified extension of $F$, the Deuring-Shafarevich formula  yields  $\gamma(\cX)-1=p^{(p-1)N}(p-2)$. Our aim is to prove that $\aut(\cX)$ contains a $p$-group of order $p^{(p-1)N+1}$.

Let $\mathbb{K}(x)$ be the rational
 subfield of $F$ generated by $x$. Obviously, $\mathbb{K}(x)$ is a subfield of $F_N$ and we are going to consider the Galois closure $M$ of $F_N|\mathbb{K}(x)$. Let  $M=\mathbb{K}(\cY)$ where $\cY$ is an algebraic curve defined over $\mathbb{K}$. Take any $\mu\in {\rm{Gal}}(M|\mathbb{K}(x))$. Then $\mu$ is a $\mathbb{K}$-automorphism of $\cY$ fixing $x$. Let $v=\mu(y)$. Since $\mu(x(y^p-y)-ax^2-1)=x(v^p-v)-ax^2-1$, from (\ref{eq2dic10})
$$x(v^p-v)-ax^2-1=0.$$  This together with (\ref{eq2dic10}) yield that either $v=y$ or $v=y+s$ with
$s\in \mathbb{F}_p^*$. In both cases $v\in F$. Therefore, ${\rm{Gal}}(M|\mathbb{K}(x))$ viewed as a subgroup $G$ of $\aut(\cY)$ preserves
$F$. From the definition of $F_N$, this implies that $G$ also preserves $F_N$. If $L$ is the (normal) subgroup of $G$ fixing $F_N$ elementwise, this yields that $H=G/L$ is a subgroup of $\aut(\cX)$.
Let $T$ be the subfield of $M$ consisting of all elements which are fixed by $L$. Since $F_N\subseteq T \subseteq M$ and $M|T$ is a Galois extension,  we have that
$$|G|=[M:\mathbb{K}(x)]=[M:T][T:F_N][F_N:F][F:\mathbb{K}(x)]=|L|[T:F_N]p^{(p-1)N}p,$$
whence $|H|=|G|/|L|$ is divisible by $p^{(p-1)N+1}$. Let $S$ be a Sylow $p$-subgroup of $H$. Then $S$ is a subgroup of $\aut(\cX)$ so that $\gamma(\cX)-1=(p-2)\textstyle\frac{|S|}{p}$. Therefore, the following result is obtained.
\begin{theorem}
\label{31mag2014} For $N\geq 1$, let $\cX$ be the curve whose function field $\K(\cX)$ is generated by all cyclic unramified $p$-extensions of degree $p^N$ of the function field of
the curve $\bar{\cX}$ with affine equation (\ref{eq2dic10}). Then $\cX$ is an extremal Nakajima curve of genus $\gg(\cX)=p^{(p-1)N}(p-2)+1$ whose $p$-group of automorphisms $S$ is a semidirect product
$U\rtimes \langle s \rangle $ where $U$ is the direct product of $p-1$ cyclic group of order $p^N$ and $s$ has order $p$.
\end{theorem}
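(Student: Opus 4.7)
The plan is to establish three claims in sequence: (a) $\cX$ is ordinary of the asserted genus; (b) the Galois group $S$ arising from the construction has order exactly $p^{(p-1)N+1}$ and is a Sylow $p$-subgroup of $\aut(\cX)$, so that $\cX$ is Nakajima extremal; (c) $S$ splits as the claimed semidirect product.

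For (a), I would apply the Hurwitz genus formula and the Deuring-Shafarevich formula \eqref{eq2deuring} to the unramified extension $F_N|F$ of degree $p^{(p-1)N}$. Since $\gg(\bar{\cX}) = \gamma(\bar{\cX}) = p-1$ by Remark \ref{rem1ago2014}, both formulas yield
\[
\gg(\cX) - 1 \;=\; \gamma(\cX) - 1 \;=\; p^{(p-1)N}(p-2),
\]
so $\cX$ is ordinary with the asserted genus.

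For (b), I would first sharpen the Galois-closure construction preceding the theorem by proving that $F_N|\K(x)$ is itself Galois, bypassing the auxiliary field $M$. Since $F|\K(x)$ is an Artin-Schreier extension of degree $p$, any $\tau \in \gal(F|\K(x))$ lifts to a $\K(x)$-embedding $\tilde\tau$ of $F_N$ into a fixed algebraic closure; the image $\tilde\tau(F_N)$ is an unramified abelian $p^N$-extension of $\tilde\tau(F)=F$, and the intrinsic maximality defining $F_N$ forces $\tilde\tau(F_N) = F_N$. Consequently $S := \gal(F_N|\K(x))$ is a $p$-group of order $p^{(p-1)N+1}$ acting faithfully on $\cX$. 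By (a) and the Nakajima bound \eqref{naka16feb2013}, every $p$-subgroup $S' \le \aut(\cX)$ satisfies $|S'| \le \tfrac{p}{p-2}(\gg(\cX)-1) = p^{(p-1)N+1}$, so equality forces $S$ to be a Sylow $p$-subgroup and $\cX$ to be extremal Nakajima.

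For (c), set $U := \gal(F_N|F) \trianglelefteq S$; by Schmid-Witt this is the direct product of $p-1$ cyclic groups of order $p^N$, with $[S:U]=p$. To split the sequence $1 \to U \to S \to S/U \to 1$, I would use the ramification of $F_N|\K(x)$. The Artin-Schreier equation $y^p - y = ax + 1/x$ implies that $F|\K(x)$ is totally (wildly) ramified precisely at the two places $x=0$ and $x=\infty$; since $F_N|F$ is unramified, $F_N|\K(x)$ is ramified exactly at the places over $x=0$ and $x=\infty$, each with inertia group of order $p$. Fix a place $P$ of $F_N$ lying over $x=0$: its inertia group $S_P \le S$ has order $p$, and $S_P \cap U = 1$ because $P$ is unramified in $F_N|F$. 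Hence $S_P$ projects isomorphically onto $S/U \cong C_p$, and taking $s$ to generate $S_P$ yields $S = U \rtimes \langle s \rangle$. The main technical obstacle I foresee is the Galois-closure step in (b), namely verifying via the intrinsic maximality of $F_N$ that $F_N|\K(x)$ is already Galois; once this is in place, the remaining ingredients — genus and $p$-rank from Deuring-Shafarevich, extremality from the Nakajima bound, and the splitting via the inertia at a totally ramified place — follow essentially formally.
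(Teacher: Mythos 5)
Your proof is correct, and it follows the same overall strategy as the paper --- compute the genus and $p$-rank of $\cX$ through the unramified extension $F_N|F$, enlarge the Galois group by descending to the rational subfield $\K(x)$, and then cap $|S|$ with the Nakajima bound --- but it differs in two respects worth recording. First, where the paper passes to the Galois closure $M$ of $F_N|\K(x)$ (with its curve $\cY$), shows by the explicit computation with $v=\mu(y)$ that $\gal(M|\K(x))$ preserves $F$ and hence $F_N$, and then extracts the subgroup $H=G/L$ of $\aut(\cX)$ by a degree count, you prove directly that $F_N|\K(x)$ is already Galois: any $\K(x)$-embedding of $F_N$ restricts to an automorphism of $F$ (normality of the Artin--Schreier extension, which is exactly what the paper's computation re-derives) and then fixes $F_N$ setwise by the intrinsic maximality defining $F_N$. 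This is a genuine streamlining that removes the auxiliary field $M$ altogether, while resting on the same canonicity of $F_N$ over $F$. Second, you make the splitting $S=U\rtimes\langle s\rangle$ explicit by taking $s$ to generate the inertia group of a place of $F_N$ over $x=0$: this group has order $p$ because $F|\K(x)$ is totally ramified there while $F_N|F$ is unramified, and it meets $U=\gal(F_N|F)$ trivially for the same reason, so it is a complement. The paper asserts the semidirect product decomposition in the statement of the theorem but leaves this verification implicit, so your inertia-group argument is a useful completion rather than a deviation; both the degree count and the Nakajima-bound step ($|S|\le\frac{p}{p-2}(\gamma(\cX)-1)=p^{(p-1)N+1}$, forcing $S$ to be a Sylow $p$-subgroup and $\cX$ to be extremal) agree with the paper.
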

Theorem \ref{31mag2014} together with Proposition \ref{pro12dic} provides a curve of type (iii) in Theorem \ref{princ}, for every proper power of $p$. An explicit example, for $p=3$ and $N=1$, is
given in Section \ref{S27}.

In our construction, $F_N$ may be replaced by any unramified Galois extension $F'$ such that $G={\rm{Gal}}(F'|F)$ is a finite group of order $p^m$ with $d(G)=p-1$,
whose automorphism group $\aut(G)$ attains (\ref{eq25agostoA}).
In fact, Proposition \ref{shafart} shows that $F'$ is the unique unramified Galois extension of $F$ with Galois group $G$ in the separable algebraic closure of $F$. Therefore, if $\cX$ is a curve with function field $F'$, the above argument shows that $\cX$ is a Nakajima extremal curve with $p$-rank equal to $p^{m+1}(p-2)$. This proves the following result.
\begin{theorem}
\label{14mag2014a} Let $G$ be a finite $p$-group of order $p^n$ such that the minimum size of its generator sets equals $p-1$. Assume that the automorphism group of $G$ attains (\ref{eq25agostoA}). Then, for every $a\in \K^{*}$, there exists a unique Nakajima extremal curve $\cX$ which is an unramified $p$-extension of the curve $\bar{\cX}$, as in Remark
{\rm\ref{rem1ago2014}}, with ${\rm{Gal}}(\K(\cX)|\K(\bar{\cX}))\cong G$.
\end{theorem}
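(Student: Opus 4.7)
The plan is to mimic the construction in Theorem \ref{31mag2014}, but with the Schmid--Witt extension $F_N$ replaced by the unramified Galois extension $F'|F$ with ${\rm Gal}(F'|F)\cong G$ supplied by Proposition \ref{shafart}. By Remark \ref{rem1ago2014}, the curve $\bar{\cX}$ with equation \eqref{eq2dic10} is ordinary of genus $p-1$, so its $p$-rank equals $\bar\gamma=p-1=d(G)$. Proposition \ref{shafart}(i) therefore produces at least one unramified Galois $p$-extension $F'$ of $F=\K(\bar{\cX})$ with ${\rm Gal}(F'|F)\cong G$. Letting $\cX$ be the non-singular curve with $\K(\cX)=F'$, the Hurwitz and Deuring--Shafarevich formulas applied to the unramified cover $\cX\to\bar{\cX}$ yield
$$\gg(\cX)-1=\gamma(\cX)-1=p^n(p-2),$$
so $\cX$ is ordinary and the Nakajima bound $\tfrac{p}{p-2}(\gg(\cX)-1)$ equals $p^{n+1}$. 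It remains to prove (a) uniqueness of $F'$ and (b) that $\aut(\cX)$ contains a $p$-subgroup of order $p^{n+1}$.

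For (a), I would plug $\bar\gamma=d(G)=p-1$ into the counting formula \eqref{frbound} and observe that its numerator coincides with the Burnside--Hall upper bound \eqref{eq25agostoA} for $\alpha(G)$; since by hypothesis $\alpha(G)$ attains this bound, the count is exactly $1$. For (b), with uniqueness in hand I would run the Galois-closure argument of Theorem \ref{31mag2014} verbatim. Let $M$ be the Galois closure of $F'|\K(x)$, where $F|\K(x)$ is the Artin--Schreier extension of degree $p$ coming from \eqref{eq2dic10}. Every $\mu\in{\rm Gal}(M|\K(x))$ fixes $x$, and \eqref{eq2dic10} forces $\mu(y)=y+s$ with $s\in\mathbb{F}_p$, so $\mu(F)=F$; now $\mu(F')$ is again an unramified Galois extension of $F$ with Galois group isomorphic to $G$, so uniqueness forces $\mu(F')=F'$. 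The restriction homomorphism ${\rm Gal}(M|\K(x))\to\aut(\cX)$ then has image of order $[F':\K(x)]=p^{n+1}$, which is automatically a $p$-group by its order.

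The essential subtlety is the logical order: the Galois-closure step relies on $\mu(F')=F'$, which depends on $F'$ being the \emph{only} unramified Galois extension of $F$ inside $M$ with Galois group isomorphic to $G$. Hence (a) must be established before (b). Once this ordering is respected no further difficulty arises, and the construction is uniform in $a\in\K^*$; the exceptional value $p=3$, $a=-1$ of Remark \ref{rem1ago2014} plays no role here, since the only data entering the proof are the genus of $\bar{\cX}$, its $p$-rank, and the Artin--Schreier structure of $F|\K(x)$, all of which are insensitive to $a$.
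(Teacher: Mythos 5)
Your proposal is correct and follows essentially the same route as the paper: existence from Proposition \ref{shafart}(i), uniqueness by observing that with $\gamma=d(G)=p-1$ the numerator of \eqref{frbound} equals the Burnside--Hall bound \eqref{eq25agostoA} so that the count is exactly $1$, and then the Galois-closure argument of Theorem \ref{31mag2014} (which, as you rightly stress, needs the uniqueness step first in order to conclude $\mu(F')=F'$) to produce the $p$-subgroup of order $p^{n+1}$ attaining the Nakajima bound. The paper's own proof is precisely this, stated more tersely in the paragraph preceding the theorem.
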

{}From Remark \ref{rem15may2014}, Theorem \ref{14mag2014a} applies to the above considered direct product of $p-1$ copies of the cyclic group of order $p^N$, and to the group $UT(r,p)$ for $r=p$.
A further refinement of the above construction is given in the following theorem.
\begin{theorem}
\label{16mag2014a} Existence (but not necessarily uniqueness) of a Nakajima extremal curve stated in Theorem \ref{14mag2014a} holds true under the weaker hypothesis that a Sylow $p$-subgroup of the automorphism group of $G$ attains (\ref{eq16may2014}).
\end{theorem}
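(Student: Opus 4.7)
The plan is to mimic the construction of Theorem \ref{14mag2014a}, replacing the uniqueness of the unramified $G$-extension by an orbit-counting argument that produces at least one extension invariant under the order-$p$ automorphism $\sigma:y\mapsto y+1$ of the curve $\bar{\cX}$ of Remark \ref{rem1ago2014}. Set $F=\K(\bar{\cX})$ and fix a separable algebraic closure $F^{sep}$. Since $d(G)=p-1=\gamma(\bar{\cX})$, Proposition \ref{shafart}(i) ensures that the set $\mathcal{E}_G$ of unramified $p$-extensions $F'\subseteq F^{sep}$ of $F$ with ${\rm Gal}(F'|F)\cong G$ is non-empty, and by Proposition \ref{shafart}(ii) it is finite. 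Under the hypothesis of Theorem \ref{16mag2014a}, the same comparison of (\ref{eq16may2014}) with (\ref{frbound}) that underlies Corollary \ref{25agosto2014} gives $|\mathcal{E}_G|\not\equiv 0\pmod p$.

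Second, since $F|\K(x)$ is cyclic Galois of degree $p$ generated by $\sigma$, fix any lift $\tilde\sigma$ of $\sigma$ to $F^{sep}$. The assignment $F'\mapsto\tilde\sigma(F')$ is a map on $\mathcal{E}_G$, and it is independent of the chosen lift: any two lifts differ by an element of ${\rm Gal}(F^{sep}|F)$, which preserves each $F'\in\mathcal{E}_G$ setwise because $F'|F$ is Galois. We thereby obtain a $\langle\sigma\rangle$-action on the finite set $\mathcal{E}_G$ whose orbits have size $1$ or $p$, and the coprimality $|\mathcal{E}_G|\not\equiv 0\pmod p$ forces at least one fixed point $F'\in\mathcal{E}_G$.

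Third, such $F'$ is Galois over $\K(x)$: separability is automatic, and for normality any $\K(x)$-embedding $\tau$ of $F'$ into $F^{sep}$ restricts on $F$ to some $\sigma^k$, so $\tau$ is a lift of $\sigma^k$, and the $\sigma$-invariance of $F'$ yields $\tau(F')=\tilde\sigma^k(F')=F'$. Therefore $H={\rm Gal}(F'|\K(x))$ is a $p$-group of order $p\cdot|G|=p^{n+1}$ that embeds into $\aut(\cX)$, where $\cX$ is the smooth projective curve with $\K(\cX)=F'$. Applying the Hurwitz and the Deuring-Shafarevich formulas to the unramified extension $F'|F$ yields $\gg(\cX)-1=\gamma(\cX)-1=p^n(p-2)$, whence $|H|=p^{n+1}=\frac{p}{p-2}(\gg(\cX)-1)$, so $\cX$ is a Nakajima extremal curve with ${\rm Gal}(\K(\cX)|\K(\bar{\cX}))\cong G$. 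The principal technical point is the well-definedness of the $\langle\sigma\rangle$-action on $\mathcal{E}_G$ together with the passage from $\sigma$-invariance of $F'$ to the normality of $F'|\K(x)$; once these are settled, Corollary \ref{25agosto2014} supplies the fixed point essentially for free, and the final genus and $p$-rank calculation is routine.
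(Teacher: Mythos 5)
Your proof is correct and follows essentially the same route as the paper's: both rest on the fact that the number of unramified $G$-extensions of $F$ inside a fixed separable closure is prime to $p$ (the comparison of (\ref{eq16may2014}) with (\ref{frbound}) behind Corollary \ref{25agosto2014}), then let a $p$-group of automorphisms permute this finite set to produce an invariant extension, and conclude with the Hurwitz and Deuring--Shafarevich formulas. The only difference is in the packaging: the paper obtains the permutation action from a Sylow $p$-subgroup of ${\rm Gal}(M|\K(x))$, where $M$ is the Galois closure of the compositum of all such extensions, whereas you lift the Artin--Schreier generator $\sigma$ of ${\rm Gal}(F|\K(x))$ directly to the separable closure; your version has the minor merit of making explicit both the well-definedness of the induced action and the normality of the resulting field over $\K(x)$.
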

\begin{proof} Let $|G|=p^m$. In a separable algebraic closure of $F$,  
let $\{F_1,\ldots,F_k\}$ be the set of all unramified Galois extension $F_i|F$ with $G\cong{\rm{Gal}}(F_i|F)$, and let $F'$ be their compositium. Obviously, the Galois closure $M$ of $F'|\mathbb{K}(x)$ contains each $F_i$. Since $d(G)=p-1$, Corollary \ref{25agosto2014} yields that $k$ is not divisible by $p$. Our arguments leading to Theorem
\ref{14mag2014a} show that ${\rm{Gal}}(M|\K(x))$ preserves $F$, and hence leaves the set $\{F_1,\ldots,F_k\}$ invariant. Since $p\nmid k$, any $p$-subgroup of ${\rm{Gal}}(M|\K(x))$ preserves at least one of them, say $F_1$. As
$$|{\rm{Gal}}(M|\K(x))|=[M:F'][F':F_1][F_1:F][F:\mathbb{K}(x)]=[M:F'][F':F_1]p^{m+1},$$
${\rm{Gal}}(M|\K(x))$ has a subgroup of order $p^{m+1}$ that preserves $F_1$. This shows that if $\cX$ is a curve with $\K(\cX)=F_1$, then $\aut(\cX)$ has a subgroup of order $p^{m+1}$. Since $[F_1:F]$ is an unramified Galois extension with Galois group of order $p^m$ and $\bar{\cX}$ has $p$-rank $p-1$, the Deuring-Shafarevich formula yields that $\cX$ has $p$-rank $p^m(p-2)+1$. Therefore, $\cX$ is a Nakajima extremal curve with an automorphism group of order $p^{m+1}$. Our argument also shows that uniqueness might not hold when $k\not\equiv 1 \pmod p$.
\end{proof}
With some changes, the above construction also applies to the Artin-Mumford curve $\cM_c=\bar{\cX}$ with affine equation (\ref{artinmumford}). As we have already mentioned,
$\gg(\bar{\cX})=\gamma(\bar{\cX})=(p-1)^2$ and $\aut(\bar{\cX})$ has an elementary abelian subgroup of order $p^2$ generated by $\alpha=(x,y)\rightarrow (x+1,y)$ and $\beta=(x,y)\rightarrow (x,y+1)$. In fact, if $F=\mathbb{K}(t)$ is the rational field generated by $t=x^p-x$, and $M$ is the Galois closure of $F_N|\mathbb{K}(t)$ then every $\mu\in {\rm{Gal}}(M|\mathbb{K}(t))$ preserves
the Artin-Mumford curve $\bar{\cX}$. Therefore, the following result holds.
\begin{theorem}
\label{20luglio2014} For $N\geq 1$, let $\cX$ be the curve whose function field $\K(\bar{\cX})$ is generated by all cyclic unramified $p$-extensions of degree $p^N$ of the function field of
the Artin-Mumford curve $\bar{\cX}$ with affine equation (\ref{artinmumford}). Then $\cX$ is an extremal Nakajima curve of genus $\gg(\cX)=p^{N(p-1)^2+1}(p-2)+1$ with a $p$-group of automorphisms $S$ whose Frattini subgroup $\Phi(S)$ of order $p^{N(p-1)^2}$ is the direct product of $(p-1)^2$ copies of the cyclic group of order $p^N$, so that the factor group $S/\Phi(S)$ is elementary abelian of order $p^2$.
\end{theorem}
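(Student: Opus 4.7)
The plan is to adapt the construction of Theorems \ref{14mag2014a} and \ref{16mag2014a}, starting from the Artin-Mumford curve $\bar{\cX}=\cM_c$ with $F=\K(\bar{\cX})$ and using the rational subfield $\K(t)\subset F$ generated by $t=x^p-x$. First I verify that $\alpha\colon(x,y)\mapsto(x+1,y)$ and $\beta\colon(x,y)\mapsto(x,y+1)$ both fix $t$ (for $\alpha$, $(x+1)^p-(x+1)=x^p-x$; for $\beta$ trivially), and that $F$ is obtained from $\K(t)$ by the two successive Artin-Schreier steps $x^p-x=t$ and $y^p-y=c/t$. Consequently $[F:\K(t)]=p^2$ and $F|\K(t)$ is Galois with group $\langle\alpha,\beta\rangle\cong C_p\times C_p$. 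Inside a fixed separable closure of $F$, the maximal unramified abelian extension $F_N$ of $F$ of exponent $p^N$ is then uniquely determined, with $\mathrm{Gal}(F_N|F)\cong(\mathbb{Z}/p^N\mathbb{Z})^{(p-1)^2}$ by Schmid--Witt, since $\bar{\cX}$ has $p$-rank $(p-1)^2$.

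The main step is to show that $F_N|\K(t)$ is itself Galois. Any $\K(t)$-automorphism of the separable closure restricts to an element of $\mathrm{Gal}(F|\K(t))$, and therefore sends $F_N$ to an unramified abelian extension of $F$ of exponent $p^N$; by the uniqueness inherent in the definition of $F_N$, this image lies in (hence equals) $F_N$. This is essentially the same maximality argument used in the proof of Theorem \ref{14mag2014a}, and is the only genuinely delicate point: one must ensure that the ``maximal unramified abelian exponent $p^N$ extension'' is intrinsic enough to be preserved by Galois conjugation over $\K(t)$. Setting $S:=\mathrm{Gal}(F_N|\K(t))$, multiplicativity of degrees yields $|S|=[F_N:F]\cdot[F:\K(t)]=p^{N(p-1)^2+2}$, a $p$-group acting faithfully on $\cX$ via the identification $\K(\cX)=F_N$.

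Finally I read off the numerical and structural claims. Since $F_N|F$ is unramified and $\bar{\cX}$ is ordinary with $\bar{\gamma}=(p-1)^2$, the Hurwitz and Deuring--Shafarevich formulas applied to the cover $\cX\to\bar{\cX}$ both give $\gg(\cX)-1=\gamma(\cX)-1=p^{N(p-1)^2}((p-1)^2-1)=p^{N(p-1)^2+1}(p-2)$, so $\cX$ is ordinary of the stated genus and $|S|=\tfrac{p}{p-2}(\gg(\cX)-1)$ makes $\cX$ Nakajima extremal. In particular the results of Section \ref{princip} apply to $S$. The normal subgroup $N_0:=\mathrm{Gal}(F_N|F)\trianglelefteq S$ has quotient $S/N_0\cong\mathrm{Gal}(F|\K(t))\cong(\mathbb{Z}/p\mathbb{Z})^2$ elementary abelian of order $p^2$, so $\Phi(S)\subseteq N_0$; combining this with $|\Phi(S)|=|S|/p^2=|N_0|$ from Proposition \ref{30ag2013}(ii) forces $\Phi(S)=N_0\cong(\mathbb{Z}/p^N\mathbb{Z})^{(p-1)^2}$, and $S/\Phi(S)$ is elementary abelian of order $p^2$ as asserted.
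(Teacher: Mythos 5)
Your proposal is correct and follows essentially the same route as the paper: the paper likewise works over the rational subfield $\K(t)$ with $t=x^p-x$, uses the Galois-invariance of the maximal unramified abelian exponent-$p^N$ extension $F_N$ over $\K(t)$ to produce the $p$-group $S$ of order $p^{N(p-1)^2+2}$, and invokes Schmid--Witt together with the Hurwitz and Deuring--Shafarevich formulas for the numerical and structural claims. The only (harmless) differences are that you prove directly that $F_N|\K(t)$ is Galois rather than passing through the Galois closure $M$ of $F_N|\K(t)$ as the paper does, and that you make explicit the identification $\Phi(S)=\mathrm{Gal}(F_N|F)$ via Proposition~\ref{30ag2013}(ii), a detail the paper leaves implicit.
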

\section{The structure of $S$ for $|S|\leq p^{p+1}$}
\begin{proposition}
\label{a31ago2013}
If $|S|=p^3$ then $S$ isomorphic to $UT(3,p)$, the unique non-abelian group of order $p^3$. Furthermore, the non-trivial elements of $S$ which have fixed points are at most $2(p^2-p)$.
\end{proposition}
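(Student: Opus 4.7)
The plan has two parts. The first is to identify $S$ up to isomorphism among the three groups of order $p^3$ (for $p$ odd). Proposition \ref{propbp=3} rules out the abelian case, leaving the Heisenberg group $UT(3,p)$ (exponent $p$) and the split extension $H=\mathbb{Z}/p^2\mathbb{Z}\rtimes\mathbb{Z}/p\mathbb{Z}$ (exponent $p^2$); the task is to exclude $S\cong H$.

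The main obstacle is ruling out $S\cong H$, and I plan to do so by exploiting the structure of the maximal subgroups. In $H$, presented as $\langle a,b\mid a^{p^2}=b^p=1,\ bab^{-1}=a^{1+p}\rangle$, the identity $(ab^k)^p=a^p\neq 1$ for $0\le k\le p-1$ shows that the $p$ subgroups $\langle ab^k\rangle$ are cyclic of order $p^2$, so that exactly one of the $p+1$ maximal subgroups (namely $\langle b,a^p\rangle$) is non-cyclic. Since $M_1$ and $M_2$ are two distinct non-semiregular maximal subgroups of $S$, at least one of them, say $M_1$, would be cyclic of order $p^2$ if $S\cong H$. From the proof of Proposition \ref{30ag2013}, $M_1$ contains $S_P$ for every $P\in\Omega_1$; the uniqueness of the subgroup of order $p$ in a cyclic group of order $p^2$ then forces all stabilizers $S_P$, $P\in\Omega_1$, to coincide, so that a single element $s$ of order $p$ fixes every point of $\Omega_1$. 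Applying the Deuring-Shafarevich formula \eqref{eq2deuring} to $\langle s\rangle$ with $\gamma-1=(p-2)p^2$ and at least $p^2=|\Omega_1|$ fixed points yields
\[
(p-2)p^2\ \ge\ p(\bar{\gamma}-1)+p^2(p-1),
\]
whence $\bar{\gamma}\le 1-p<0$, a contradiction. Therefore $S\cong UT(3,p)$ and, in particular, both $M_1$ and $M_2$ are elementary abelian of order $p^2$.

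For the counting of fixed-point elements, I use that the short orbits of $S$ are exactly $\Omega_1$ and $\Omega_2$ (Proposition \ref{propap=3}), so every non-trivial element with a fixed point stabilizes some $P\in\Omega_1\cup\Omega_2$. For $P\in\Omega_i$, Proposition \ref{30ag2013} gives $S_P\subseteq M_i$; since $M_i$ is abelian of order $p^2$ and $|S_P|=p$, the $M_i$-orbit of $P$ inside $\Omega_i$ has length $p$ with common stabilizer $S_P$, so $\Omega_i$ splits into $p$ $M_i$-orbits with at most $p$ distinct stabilizers, contributing at most $p(p-1)=p^2-p$ non-trivial elements of $M_i$ with a fixed point in $\Omega_i$. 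The two contributions from $\Omega_1$ and $\Omega_2$ are disjoint because $M_1\cap M_2=\Phi(S)=Z(S)$ is semiregular by Proposition \ref{propgp=3}, so any fixed-point element of $M_1$ lies in $M_1\setminus\Phi(S)$, and similarly for $M_2$. Adding the two bounds yields $2(p^2-p)$.
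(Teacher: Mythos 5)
Your proof is correct and follows the same overall strategy as the paper's: classify the groups of order $p^3$, use Proposition \ref{propbp=3} to exclude the abelian ones, exclude the modular group $C_{p^2}\rtimes C_p$ because it has only one non-cyclic maximal subgroup while $M_1\neq M_2$ must both be non-cyclic, and then count the fixed-point elements inside $M_1\cup M_2$ after discarding the semiregular subgroup $\Phi(S)=Z(S)$. The one place where you genuinely diverge is in ruling out a cyclic $M_1$: the paper delegates this to Proposition \ref{31ago2013}, whose claim for $i=1,2$ rests on Proposition \ref{propcp=3} (the unique order-$p$ subgroup of a cyclic $M_1$ would be characteristic in $M_1$, hence normal in $S$ and non-semiregular, contradicting the order constraint of Proposition \ref{propcp=3}); you instead note that all stabilizers $S_P$, $P\in\Omega_1$, would coincide and derive a contradiction from the Deuring-Shafarevich formula \eqref{eq2deuring}, in the spirit of the last paragraph of the proof of Proposition \ref{propap=3}. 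Your version is self-contained and, as a bonus, sidesteps the superficially circular cross-reference between Propositions \ref{a31ago2013} and \ref{31ago2013} (the latter's base case $|S|=p^3$ cites the former). A minor quibble: there are five groups of order $p^3$ for odd $p$, not three, but this does not affect your argument since you treat the two non-abelian types explicitly and dispose of all abelian ones at once.
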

\begin{proof} From the classification of groups of order $p^3$, see \cite[Chapter I, 14.10 Satz]{huppertI1967}, either $S=C_{p^2}\rtimes C_p$, or $S\cong UT(3,p)$. Since the group $C_{p^2}\rtimes C_p$ has more than two cyclic maximal subgroups, the first assertion follows from Proposition \ref{31ago2013}. The elements of $S$ with fixed points fall into two subgroups, namely $M_1$ and $M_2$, both elementary abelian of order $p^2$. Since $Z(S)$ is a subgroup of $M_1$ of order $p$, Proposition \ref{propgp=3} shows that $M_1$ (and $M_2$) has at most as many as $p^2-p$ non-trivial elements with a fixed points.
\end{proof}
\begin{proposition}
\label{14may2014d}
For $c\in \K^*$, the curve  $\cX_c$ with function field $\mathbb{K}(x,y,z)$ defined by the equations
\begin{itemize}
\item[\rm(i)] $(x^p-x)(y^p-y)-c=0$;
\item[\rm(ii)] $z^p-z+x^py-xy^p=0$.
\end{itemize}
is a Nakajima extremal curve whose automorphism group has order $p^3$,  and its $\mathbb{K}$-automorphism group is a semidirect product of $U(p,3)$ by a dihedral group of order $2(p-1)$.
\end{proposition}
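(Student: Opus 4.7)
Plan of proof. The strategy is to realise $\cX_c$ as an unramified Artin--Schreier cover of the Artin--Mumford curve $\cM_c$ and then exhibit enough $\K$-automorphisms to match the claimed structure. Write $\K(\cM_c)=\K(x,y)$ subject to $(x^p-x)(y^p-y)=c$, so that $\K(\cX_c)=\K(x,y,z)$ is obtained by adjoining a root of $z^p-z=xy^p-x^py$. By \cite[Theorem~11.93]{hirschfeld-korchmaros-torres2008}, $\cM_c$ is ordinary of genus $(p-1)^2$, and under the elementary abelian group $\langle(x,y)\mapsto(x+1,y),(x,y)\mapsto(x,y+1)\rangle$ it has exactly two short orbits, each of length $p$: the points $P^i_\infty$ (where $x=\infty$, $y=i\in\mathbb{F}_p$) and the points $P^\infty_i$ (where $x=i$, $y=\infty$).

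The key technical step is the ramification analysis of $\cX_c\to\cM_c$. Since $f:=x^py-xy^p$ is polynomial, its poles on $\cM_c$ can lie only at the $2p$ infinite points. The relation $(x^p-x)(y^p-y)=c$ gives $v_{P^i_\infty}(x)=-1$ and $v_{P^i_\infty}(y-i)=p$, whence $v_{P^i_\infty}(f)=-p$ for $i\ne0$ and $v_{P^0_\infty}(f)=0$. The Artin--Schreier change of variable $w:=z+ix$ satisfies
\[
w^p-w \;=\; (z^p-z)+(ix)^p-ix \;=\; -f+ix^p-ix \;=\; x(y-i)^p-x^p(y-i)
\]
(using $i^p=i$ and Frobenius), and at $P^i_\infty$ the right-hand side has non-negative valuation; the $x\leftrightarrow y$ symmetry yields the analogous conclusion at every $P^\infty_i$. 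Hence $\cX_c\to\cM_c$ is \'etale, and the Hurwitz and Deuring--Shafarevich formulas both give
\[
\gg(\cX_c)-1 \;=\; \gamma(\cX_c)-1 \;=\; p\bigl((p-1)^2-1\bigr) \;=\; p^2(p-2),
\]
so $\cX_c$ is an ordinary Nakajima extremal curve.

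Next I would introduce the explicit automorphisms
\[
\alpha\colon(x,y,z)\mapsto(x+1,y,z+y),\quad\beta\colon(x,y,z)\mapsto(x,y+1,z-x),\quad\gamma\colon(x,y,z)\mapsto(x,y,z+1),
\]
\[
\tau_\lambda\colon(x,y,z)\mapsto(\lambda x,\lambda^{-1}y,z)\ (\lambda\in\mathbb{F}_p^*),\quad\sigma\colon(x,y,z)\mapsto(y,x,-z),
\]
and check by direct substitution (using $\lambda^p=\lambda$) that each preserves both defining equations. A short calculation yields $\alpha^p=\beta^p=\gamma^p=1$, $\gamma$ central, and $[\alpha,\beta]=\gamma^2\ne1$; hence $S:=\langle\alpha,\beta,\gamma\rangle$ is non-abelian of order $p^3$ and exponent $p$, so $S\cong UT(3,p)=U(p,3)$. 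The relations $\sigma^2=1$, $\tau_\lambda\tau_\mu=\tau_{\lambda\mu}$ and $\sigma\tau_\lambda\sigma=\tau_{\lambda^{-1}}$ make $H:=\langle\sigma,\tau_\lambda\rangle$ dihedral of order $2(p-1)$, while the conjugation formulas $\tau_\lambda\alpha\tau_\lambda^{-1}=\alpha^\lambda$, $\sigma\alpha\sigma=\beta$ (with analogues for $\beta,\gamma$) show that $H$ normalises $S$. Therefore $\aut(\cX_c)\supseteq S\rtimes H$, a subgroup of order $2p^3(p-1)$.

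To conclude equality, the Nakajima bound (\ref{naka16feb2013}) applied to $\cX_c$ forces $|T|\le p^3$ for every $p$-subgroup $T$ of $\aut(\cX_c)$, so $S$ is a Sylow $p$-subgroup; Theorem \ref{fullaut} then yields $\aut(\cX_c)=S\rtimes H'$ with $H'$ contained in a dihedral group of order $2(p-1)$, and the inclusion $H\subseteq H'$ already constructed forces $H'=H$. The principal obstacle is the ramification step: the apparent pole order of $f$ at $P^i_\infty$ is $p$, and only after the Artin--Schreier normalisation $w=z+ix$ does the cover become manifestly \'etale. For $p=3$ one has in addition to exclude the exceptional alternative in Theorem \ref{fullaut}, which follows from the observation that the involution $\sigma$ interchanges the short orbits $\Omega_1,\Omega_2$ of $S$ on $\cX_c$ and hence the two non-semiregular maximal subgroups $M_1,M_2$ of $S$ (Proposition \ref{30ag2013}), so that $S$ itself (rather than any proper index-$3$ subgroup) must be normal in $\aut(\cX_c)$.
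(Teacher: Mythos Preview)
Your overall approach mirrors the paper's: realise $\cX_c$ as an Artin--Schreier cover of $\cM_c$, verify it is \'etale via the substitution $z\mapsto z+ix$ (the paper writes this as $t_i=ix$), exhibit the explicit automorphisms, and invoke Theorem~\ref{fullaut}. The ramification computation and the group relations you check are correct and essentially identical to the paper's.

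Two genuine gaps remain. First, you never verify that the extension $\K(\cM_c)\subset\K(\cX_c)$ has degree~$p$ rather than~$1$; equivalently, that $u=xy^p-x^py$ is not of the form $w^p-w$ for any $w\in\K(x,y)$. This is not automatic: the paper spends a full paragraph on it, using the explicit basis $\{x^iy^j:0\le i,j\le p-2\}$ of the canonical linear system on $\cM_c$ to rule out every candidate $w$. Without this step your genus computation and the very existence of the automorphism $\gamma$ are unjustified (if $z\in\K(x,y)$, the formula $z\mapsto z+1$ does not define an automorphism).

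Second, and more seriously, your treatment of $p=3$ is incorrect. The observation that $\sigma$ swaps $\Omega_1$ and $\Omega_2$ (hence $M_1$ and $M_2$) only shows that \emph{those two} maximal subgroups fail to be normal in $\aut(\cX_c)$; it says nothing about $M_3$ or $M_4$, so it does not exclude the second alternative of Theorem~\ref{fullaut}. In fact the paper records (via a Magma computation) that for $p=3$ the full automorphism group of $\cX_c$ has order $432=9\cdot|GL(2,3)|$, with an index-$3$ subgroup $M\lhd\aut(\cX_c)$ and $\aut(\cX_c)/M\cong GL(2,3)$; this is strictly larger than $|S\rtimes D_{p-1}|=108$. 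So the exceptional case \emph{does} occur, the proposition as stated holds only for $p>3$, and no argument along your lines can succeed when $p=3$.
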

\begin{proof}
As before, let $\cM_c$ denote the Artin-Mumford curve with affine equation (\ref{artinmumford}). We first show that $\mathbb{K}(\cX_c)$ is an unramified Artin-Schreier extension of $\mathbb{K}(\cM_c)$.
This will imply that $\gg(\cX_c)=\gamma(\cX_c)=(p-2)p^2+1$.

Since $\gg(\cM_c)=(p-1)^2$ and $\mathbb{K}(\cM_c)=\mathbb{K}(x,y)$ with $x,y$ as in (\ref{artinmumford}), there exist places $P_0,\ldots,P_{p-1},$ $Q_0,\ldots,Q_{q-1}$ such that
$$
(y)_0=pP_0,\qquad (y)_{\infty}=Q_0+\ldots+Q_{q-1},
\qquad (x)_0=pQ_0,\qquad (x)_{\infty}=P_0+\ldots+P_{p-1},
$$
and for each $i=1,\ldots,p-1$
$$
v_{P_i}(y-i)=v_{Q_i}(x-i)=p.
$$
Let $u=xy^p-x^py$. Then $u=xy\prod_{a \in \mathbb F_P^\star}(y-ax)$. The pole divisor of $u$ is
$$
(u)_\infty=p(P_1+\ldots+P_{p-1}+Q_1+\ldots+Q_{p-1}).
$$
Also,
$$
v_{P_0}(u)=0,\qquad v_{Q_0}(u)=0.
$$
In order to prove that the equation $z^3-z=u$ defines an Artin-Schreier extension of $\mathbb K(x,y)$, we first show that $u\neq w^p-w$ for every $w\in \mathbb K(x,y)$; see \cite[Proposition III.7.8]{STI2ed}.
A canonical divisor of $\mathbb K(x,y)$ is
$$
W=(p-2)(P_0+\ldots+P_{p-1}+Q_0+\ldots+Q_{p-1}),
$$
and a $\mathbb K$-basis of ${\mathcal{L}}(W)$ is
$$
\{x^iy^j\mid 0\le i\le p-2,\, 0\le j\le p-2\}.
$$
Assume that $u=w^p-w$ for some $w\in \mathbb K(x,y)$. Then
$$
(w)_\infty=P_1+\ldots+P_{p-1}+Q_1+\ldots+Q_{p-1}.
$$
Therefore, $w\in {\mathcal{L}}(W)$, and hence
$$
w=\sum_{i=0,\ldots,p-1}x^if_i(y),
$$
for $f_i$ a polynomial in $\mathbb K[T]$ of degree less than or equal to $p-2$. Note that for each $k=1,\ldots,p-1$
$$
v_{P_k}(x^if_i(y))=-i+ps_{i,k},
$$
where $s_k$ is the multiplicity of $k$ as a root of $f_i$. As the degree of $f_i$ is less than $p-1$, for each $i>0$ with $f_i(y)\neq 0$ there is some $k$ with $s_{i,k}=0$. Let $k_i$ be the minimum of such $k$'s. Then
$$
-1=v_{P_{k_i}}(w)=-i,
$$
which shows that $f_i(y)=0$ for each $i\ge 2$.
Then
$$
w=f_0(y)+xf_1(y).
$$
Analogously, it can be proved that
$$
w=g_0(x)+yg_1(x)
$$
for some polynomials $g_0,g_1\in \mathbb K[T]$ of degree less than or equal to $p-2$.
The only possibility is that
$$
w=\alpha+\beta x+\gamma y+\delta xy, \text{ for some }\alpha,\beta,\gamma,\delta \in \mathbb K.
$$
Therefore,
$$
u=xy^p-x^py=(\alpha+\beta x+\gamma y+\delta xy)^p-(\alpha+\beta x+\gamma y+\delta xy)=\alpha^p-\alpha-\beta x+\beta^px^p-\gamma y+\gamma^py^p-\delta xy+\delta^px^py^p.
$$
If $\beta\neq 0$, then
$$
v_{P_0}(u)=v_{P_0}(\beta^px^p)=-p;
$$
similarly, if $\gamma\neq 0$ then
$$
v_{Q_0}(u)=v_{Q_0}(\gamma^py^p)=-p.
$$
As $v_{P_0}(u)=v_{Q_0}(u)=0$, we have
 $\beta=\gamma=0$ and hence $u=\alpha^p-\alpha-\delta xy+\delta ^px^py^p$.
 From $(x^p-x)(y^p-y)=c$ it follows
$
x^py^p=x^py+xy^p-xy+c,
$
whence $u=\delta^p(x^py+xy^p-xy+c)-\delta xy+\alpha^3-\alpha$, and
$$
(1-\delta^p)xy^p-(1+\delta^p)x^py+(\delta^p+\delta)xy-(\delta^p c+\alpha^p-\alpha)=0.
$$
Valuating at $P_1$ and $Q_1$ gives $\delta^p=1$ and $\delta^p=-1$, a contradiction.

In order to prove that that the extension $\mathbb{K}(x,y,z)|\mathbb{K}(x,y)$ is unramified,
we need to show that
for each $i=1,\ldots,p-1$ there exist  $t_i$ and $v_i$  such that
\begin{equation}\label{key}
v_{P_i}(xy^p-x^py-(t_i^p-t_i))\ge 0,\qquad v_{Q_i}(xy^p-x^py-(v_i^p-v_i))\ge 0.
\end{equation}
Let $t_i=ix$.
Then
$$
xy^p-x^py-(t_i^p-t_i)=xy^p-x^py+i x^p-i x=x^p(i-y)-x(i-y)^p=x(i-y)\prod_{a \in \mathbb F_p^\star}(x-a(i-y))
$$
and hence
$$
v_{P_i}(xy^p-x^py-(t_i^p-t_i))=v_{P_i}(y-i)-p=0.
$$
Similarly, one can show that
$v_{Q_i}(xy^p-x^py-((iy)^p-(iy)))=0$ for each $i=1,\ldots,p-1$. This completes the proof of the first assertion.

Both maps
$$ g:\,(x,y,z)\mapsto (x+1,y,z+y)\quad h:\,(x,y,z)\mapsto (x,y-1,z+x)$$
are in $\aut(\cX)$. They generate a non-abelian group $S$ of order $p^3$ and exponent $p$. Therefore $S\cong UT(p,3)$. Furthermore, $\aut(\cX)$ contains the maps $r:\,(x,y,z)\mapsto (y,x,-z)$, and $t:=(x,y,z)\mapsto(\omega x,\omega^{-1} y,z)$ where $\omega$ is primitive element of $\mathbb{F}_p$.  By a straightforward computation, $\langle r,t \rangle\cong D_{p-1}$ and
$$rgr=h^{-1},\, rhr=g^{-1},\, t^{-1}gt=g^{\omega^{-1}},\, t^{-1}ht=h^{\omega}.$$ Thus $G=\langle g,h,r,t \rangle\cong U(p,3)\rtimes D_{p-1}$. Actually $G$ is the full $\mathbb{K}$-automorphism group of $\cX$ for $p>3$. This follows from Theorem \ref{fullaut}. For $p=3$, a Magma computation shows that $\aut(\cX)$ is larger as it has order $432$ and $\aut(\cX)\cong U(3,3)\rtimes V$ where $V$ is a semidihedral group of order $16$.
\end{proof}
\begin{proposition}
\label{18mar2014}
If $|S|\leq p^p$ then $S$ has exponent $p$.
\end{proposition}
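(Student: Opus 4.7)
The plan is to reduce the statement to two classical theorems of P.~Hall on regular $p$-groups, after extracting a single structural fact already proved in the paper. The crucial input is Proposition \ref{30ag2013}(v), which asserts that $S$ is generated by two elements of order $p$: namely a generator $g_1$ of the stabilizer $S_P$ for some $P\in\Omega_1$ and a generator $g_2$ of the stabilizer $S_Q$ for some $Q\in\Omega_2$, each being of order $p$ by Lemma \ref{eq130apr2013}. Thus $S$ is a $2$-generator $p$-group whose chosen generators already have order $p$.

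Next I would invoke Hall's theorem that a finite $p$-group whose nilpotency class is strictly less than $p$ is regular; see, for example, \cite[Chapter III, Satz 10.2]{huppertI1967}. Since any $p$-group of order $p^n$ has nilpotency class at most $n-1$, the hypothesis $|S|\leq p^p$, i.e. $n\leq p$, forces the class of $S$ to be at most $p-1$. Consequently $S$ is a regular $p$-group.

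The final step uses another classical consequence of regularity: in a regular $p$-group $G$, the subset $\Omega_1(G)=\{x\in G\mid x^p=1\}$ is actually a subgroup (\cite[Chapter III, Satz 10.5]{huppertI1967}). Applying this to $S$ and using that both generators $g_1,g_2$ lie in $\Omega_1(S)$, we conclude $S=\langle g_1,g_2\rangle\subseteq \Omega_1(S)$, hence $S=\Omega_1(S)$. Therefore every element of $S$ has order dividing $p$, proving that $S$ has exponent $p$.

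There is no substantial obstacle in this argument beyond the bookkeeping of correctly citing Hall's regularity theorem and the subgroup property of $\Omega_1$. The geometric content has already been absorbed into Proposition \ref{30ag2013}(v), and the rest is purely group-theoretic; the only delicate point is the class bound, which relies on the uniform and elementary estimate $c(S)\leq n-1$ for any $p$-group of order $p^n$.
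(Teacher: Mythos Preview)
Your proof is correct and follows essentially the same route as the paper: both use that $|S|\le p^p$ forces $S$ to be regular, and then combine this with Proposition~\ref{30ag2013}(v) (generation by two elements of order $p$) to conclude exponent $p$. The only cosmetic differences are that the paper cites part~(b) of \cite[Chapter III, Satz 10.2]{huppertI1967} directly (order $\le p^p$ implies regular) rather than going through the class bound and part~(a), and it phrases the last step via $\mho_1(S)=1$ from \cite[Chapter III, Satz 10.7 a)]{huppertI1967} rather than via the subgroup property of $\Omega_1(S)$; these are equivalent formulations of the same regularity fact.
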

\begin{proof} From \cite[Chapter III, 10.2 b) Satz]{huppertI1967}, $S$ is a regular $p$-group. By (v) of Proposition \ref{30ag2013}, $S$ is generated by (two) elements of order $p$. Therefore,
the subgroup $\Omega_1(S)$ generated by all elements of order $p$ is the whole group $S$. From  \cite[Chapter III, 10.7 a) Satz]{huppertI1967}, the subgroup of $S$ generated by all elements which are proper $p$-powers of elements in $S$ is trivial. Hence, every non-trivial element of $S$ has order $p$.
\end{proof}
\begin{proposition}
\label{aprop15may2014} If $|S|=p^{p+1}$, then $S$ has exponent $p$ or $p^2$. In the latter case, $M_1$ and $M_2$ have exponent $p$, and if $M_i$ with $3\le i \le p+1$ has exponent $p^2$ then all elements of $M_i$ of order $p$ are in $\Phi(S)$. Moreover, the maximal normal subgroups $M_i$ of exponent $p^2$ are as many as $k$, then the number of elements of $S$ of order $p$ is equal to $(p+1-k)(p^p-p^{p-1})+p^{p-1}-1$.
\end{proposition}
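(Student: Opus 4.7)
The plan is to combine three ingredients: (a) the exponent restriction on smaller extremal examples coming from Proposition~\ref{18mar2014}, (b) the semiregularity dichotomy for normal subgroups of $S$ from Proposition~\ref{propcp=3}, and (c) the regularity of $p$-groups of order at most $p^p$ together with the consequence that such a group generated by elements of order $p$ has exponent $p$, which is exactly the argument used in the proof of Proposition~\ref{18mar2014} (via Huppert's regularity and $\mho_1/\Omega_1$ identities).

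I would first bound the exponent of $S$ itself. Any normal subgroup $N$ of $S$ with $|N|=p$ lies in $Z(S)$, and by Proposition~\ref{a30ag2013} the quotient $\bar\cX=\cX/N$ is a Nakajima extremal curve whose Sylow $S/N$ has order $p^p$; Proposition~\ref{18mar2014} then forces $S/N$ to have exponent $p$, so $g^p \in N$ for every $g \in S$. Letting $N$ range over all order-$p$ subgroups of $Z(S)$, we conclude that $g^p$ lies in their intersection. When $Z(S)$ is not cyclic this intersection is trivial, so $S$ has exponent $p$; when $Z(S)$ is cyclic this intersection is its unique subgroup of order $p$, so $g^{p^2}=1$ for every $g$ and $S$ has exponent at most $p^2$.

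Next I treat the $M_i$. For $i=1,2$, the subgroup $K_i:=\langle S_P \mid P \in \Omega_i\rangle$ is normal in $S$ (the stabilizers form a single $S$-conjugacy class of subgroups since $\Omega_i$ is an $S$-orbit) and is not semiregular; Proposition~\ref{propcp=3} then forces $|K_i|=|S|/p=p^p$, and since $M_i$ is the unique maximal normal subgroup of $S$ containing these stabilizers we must have $K_i=M_i$. Thus $M_i$ is generated by elements of order $p$, and because $|M_i|=p^p$ the group $M_i$ is regular; repeating the argument of Proposition~\ref{18mar2014} verbatim gives that $M_i$ has exponent $p$. For $3\le i \le p+1$, the key observation is that $\Phi(S)\subseteq M_1$ inherits exponent $p$ from $M_1$. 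If some $g \in M_i \setminus \Phi(S)$ has order $p$, then because $\Phi(S)$ has index $p$ in $M_i$ we have $M_i=\langle g\rangle \cdot \Phi(S)$, which is again generated by elements of order $p$, and the same regularity argument gives exponent $p$. Contrapositively, if $M_i$ has exponent $p^2$ then every element of order $p$ in $M_i$ must lie in $\Phi(S)$.

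For the final count I use the partition $S=\Phi(S)\sqcup\bigsqcup_{i=1}^{p+1}(M_i\setminus\Phi(S))$, which comes from the fact that $S/\Phi(S)$ is elementary abelian of rank two (Proposition~\ref{30ag2013}), so every nonzero coset lies in exactly one of the $p+1$ hyperplanes. All $p^{p-1}-1$ non-identity elements of $\Phi(S)$ have order $p$; each $M_i$ of exponent $p$ contributes $|M_i|-|\Phi(S)|=p^p-p^{p-1}$ order-$p$ elements from $M_i\setminus\Phi(S)$, while each $M_i$ of exponent $p^2$ contributes none, by what we just proved. Summing with $k$ equal to the number of maximal subgroups of exponent $p^2$ yields the claimed formula $(p+1-k)(p^p-p^{p-1})+p^{p-1}-1$. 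The only delicate point in the entire plan is the identification $M_i=\langle S_P \mid P\in\Omega_i\rangle$ for $i=1,2$ via Proposition~\ref{propcp=3}: once that is in hand, everything reduces to the clean interplay between regularity of $p$-groups of order at most $p^p$ and the Frattini-quotient structure of $S$.
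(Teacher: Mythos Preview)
Your argument is correct and follows essentially the same route as the paper: you identify $M_1$ and $M_2$ as generated by elements of order $p$ via Proposition~\ref{propcp=3}, invoke regularity of groups of order $p^p$ to force exponent $p$, use $\Phi(S)\subseteq M_1$ to handle the remaining $M_i$, and count using the Frattini partition. The only noteworthy variation is in how you bound the exponent of $S$: the paper argues directly that $g^p\in\Phi(S)\subseteq M_1$ (since $S/\Phi(S)$ has exponent $p$) and then uses that $M_1$ has exponent $p$, whereas you pass to the quotient by a central subgroup of order $p$ and invoke Proposition~\ref{18mar2014} on the resulting Nakajima extremal quotient curve via Proposition~\ref{a30ag2013}. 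Both are valid; your route has the mild bonus of showing that $S$ already has exponent $p$ whenever $Z(S)$ is noncyclic, while the paper's route avoids appealing to the quotient curve altogether. Similarly, where the paper takes $N_1=\Omega_1(M_1)$ (characteristic in $M_1$, hence normal in $S$), you take the normal closure $K_i=\langle S_P:P\in\Omega_i\rangle$; either choice feeds into Proposition~\ref{propcp=3} the same way.
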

\begin{proof} The subgroup $N_1$ generated by the elements of $M_1$ of order $p$ is a characteristic subgroup of $M_1$. Since $M_1$ is a normal subgroup of $S$, this yields that $N_1$ is a normal subgroup of $S$. By Lemma \ref{eq130apr2013}, the stabilizer of a point $P\in \Omega_1$ is in $N_1$. Hence Proposition \ref{propcp=3} yields $N_1=M_1$.  Since $M_1$ has order $p^p$ its exponent is equal to $p$. Therefore, \cite[Chapter III, 10.7 a) Satz]{huppertI1967} yields no non-trivial element of $M_1$ is a $p$-power of an element of $M_1$, that is, $M_1$ has exponent $p$. This remains true for $M_2$. If $S$ has exponent $p^h$ with $h>1$ then some $M_i$ with $3\le i \le p+1$ contains an element $u$ of order $p^i$. Since $\Phi(S)$ is a subgroup of $M_i$ of index $p$, $\Phi(S)$ contains $u^p$. On the other hand $\Phi(S)$ is a subgroup of $M_1$ and $M_1$ has exponent $p$. Therefore, $u^{p^2}=1$ whence $h=2$. Moreover, if $M_i$ had an element $v$ of order $p$ other than those in $\Phi(S)$, then $\Phi(S)$ together with $v$ would generate $M_i$. Since $M_i$ is a $p$-regular subgroup, this would yield $M_i$ to have exponent $p$, again by \cite[Chapter III, 10.7 a) Satz]{huppertI1967}; a contradiction. Therefore, no element of $M_i\setminus \Phi(S)$ has order $p$. If we have $k$ such $M_i$, then $S$ has exactly $(p+1-k)(p^p-p^{p-1})+p^{p-1}-1$ whence the last claim follows.
\end{proof}
\section{Particular families of groups}
Metacylic, regular $p$-groups and $p$-groups with maximal nilpotency class play an important role in Group theory; the main references are  \cite[Section III.14]{huppertI1967}, and \cite{berkovichbook}. This gives a motivation for the study of Nakajima extremal curves whose $p$-automorphism group $S$ falls in one of those families.
\begin{proposition}
\label{d4set2013} If $|S|\ge p^4$ then $S$ is not metacyclic.
\end{proposition}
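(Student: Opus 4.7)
The plan is to assume that $S$ is metacyclic and derive a contradiction by focusing on the maximal normal subgroup $M_1$ of $S$ containing the stabilizers of points of the short orbit $\Omega_1$. Since $[S:M_1]=p$ and $|S|\geq p^4$, we have $|M_1|\geq p^3$, so it suffices to show that under the metacyclic hypothesis $|M_1|\leq p^2$.

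First I would show that $M_1$ coincides with the subgroup generated by its elements of order $p$, by essentially recycling the opening argument of Proposition~\ref{aprop15may2014}. Let $N_1$ be the subgroup of $M_1$ generated by all its elements of order $p$; it is characteristic in $M_1$ and, because $M_1$ is normal in $S$, it is itself a normal subgroup of $S$. By Lemma~\ref{eq130apr2013} the stabilizer $S_P$ of any $P\in\Omega_1$ has order $p$ and lies in $M_1$, so it lies in $N_1$. Therefore $N_1$ is a non-trivial normal subgroup of $S$ which is not semiregular on $\cX$, and Proposition~\ref{propcp=3} forces $|N_1|=|S|/p=|M_1|$, whence $N_1=M_1$.

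Second, I would invoke the classical structure theory of metacyclic $p$-groups for odd $p$. Every subgroup of a metacyclic $p$-group is again metacyclic, and metacyclic $p$-groups with $p$ odd are regular (\cite[Chapter III, Satz~10.2]{huppertI1967}). Hence $M_1$ is a regular $p$-group that is generated by elements of order $p$, and \cite[Chapter III, Satz~10.7(a)]{huppertI1967} then yields that $M_1$ itself has exponent $p$. But a metacyclic group of exponent $p$ has a cyclic normal subgroup of order at most $p$ whose quotient is cyclic of order at most $p$, so $|M_1|\leq p^2$, contradicting $|M_1|\geq p^3$.

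The main obstacle is purely bibliographic: pinpointing the two structural facts about metacyclic $p$-groups (closure of the class under subgroups, and regularity for odd $p$) to precise statements in \cite{huppertI1967} or \cite{berkovichbook}. As a sanity check I would verify that the argument genuinely requires $p\geq 3$ at the regularity step and nowhere else, which is consistent with the standing hypothesis of the paper.
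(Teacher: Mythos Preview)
Your argument is correct and takes a genuinely different route from the paper's proof. The paper passes to a \emph{quotient}: it uses that $S'=\Phi(S)$ is cyclic (since $S$ is metacyclic), picks a characteristic subgroup $N\lhd S$ of index $p^3$, invokes Proposition~\ref{a30ag2013} to see that $\cX/N$ is again Nakajima extremal, and then Proposition~\ref{a31ago2013} forces $S/N\cong UT(3,p)$, which is not metacyclic --- contradicting the fact that quotients of metacyclic groups are metacyclic. You instead pass to a \emph{subgroup}: you show $M_1$ is generated by its elements of order $p$ (via Lemma~\ref{eq130apr2013} and Proposition~\ref{propcp=3}), then use that $M_1$, being a subgroup of a metacyclic group, is metacyclic, hence has cyclic commutator subgroup and is therefore regular for $p>2$ (\cite[Chapter III, 10.2 Satz c)]{huppertI1967}); regularity forces $M_1$ to have exponent $p$, so $|M_1|\le p^2$, contradicting $|M_1|=|S|/p\ge p^3$.

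Your approach is slightly more self-contained within Section~\ref{princip}: it avoids Propositions~\ref{a30ag2013} and~\ref{a31ago2013} (the quotient-curve machinery and the $p^3$ classification), relying only on Proposition~\ref{propcp=3}. The paper's approach, on the other hand, illustrates the descent technique that is used repeatedly throughout. Your ``bibliographic obstacle'' is minor: the regularity of odd-$p$ metacyclic groups is exactly \cite[Chapter III, 10.2 Satz c)]{huppertI1967} (cyclic $G'$ implies regular for $p>2$), and closure under subgroups is immediate from the definition.
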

\begin{proof} Assume on the contrary that $S$ is metacyclic. From Proposition \ref{30ag2013} and \cite[Lemma 2.2]{blackburn1958},
$S'/S$ is cyclic. Therefore $S'$ contains a characteristic subgroup $N$ of index $p$. By (i) of Proposition \ref{30ag2013}, $N$ has index $p^3$ in $S$. From  Proposition \ref{a30ag2013} applied to $N$, $\bar{S}=S/N$ is a subgroup of $\aut(\bar{\cX})$ with $\bar\cX=\cX/N$ such that $|\bar{S}|=p^3$, Proposition \ref{a31ago2013} implies that $\bar{S}\cong UT(3,p)$. On the other hand, as  $S$ is metacyclic,  \cite[Theorem 2]{berkovich2006} yields that $\bar{S}={S/N}$ is also a metacyclic group. But  $UT(3,p)$ is not a metacyclic group by Proposition \ref{a31ago2013},
a contradiction.
\end{proof}
\begin{proposition}
\label{9mag2014} $S$ is a regular $p$-group if and only if $S$ has exponent $p$.
\end{proposition}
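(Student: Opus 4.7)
\medskip

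\noindent\textbf{Proof proposal.} The plan is to verify the two directions separately, using only Proposition~\ref{30ag2013}(v) together with standard facts on regular $p$-groups from \cite{huppertI1967}; no curve-theoretic input beyond what is already available is required.

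For the implication ``exponent $p$ $\Rightarrow$ regular'', I will appeal directly to Hall's definition of regularity. Recall that $S$ is regular if for every pair $x,y\in S$ one can find $c_1,\ldots,c_r\in [\langle x,y\rangle,\langle x,y\rangle]$ such that $(xy)^p=x^py^p\,c_1^p\cdots c_r^p$. If $S$ has exponent $p$, then $x^p=y^p=(xy)^p=1$ and $c_i^p=1$ for every $c_i$, so the identity $1=1$ is trivially realised (say with $r=0$). Hence $S$ is regular by definition.

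For the converse ``regular $\Rightarrow$ exponent $p$'', I will mimic the closing part of the proof of Proposition~\ref{18mar2014}, noting that the only use of $|S|\le p^p$ there was to invoke Huppert's order bound ensuring regularity, which is now hypothesised directly. By Proposition~\ref{30ag2013}(v), $S$ is generated by two elements of order $p$, namely a pair of stabilisers of points chosen in $\Omega_1$ and $\Omega_2$. Both generators lie in $\Omega_1(S)=\{x\in S:x^p=1\}$, which is a (characteristic) subgroup in any regular $p$-group; therefore $\Omega_1(S)=S$. Applying \cite[Chapter III, 10.7 a) Satz]{huppertI1967}, which asserts $|S:\Omega_1(S)|=|\mho_1(S)|$ in a regular $p$-group, yields that the subgroup $\mho_1(S)$ generated by proper $p$-th powers is trivial, i.e.\ every non-trivial element of $S$ has order $p$.

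I expect no essential obstacle: the argument is a two-line citation exercise once Proposition~\ref{30ag2013}(v) is available, and in fact the $(\Leftarrow)$ direction follows from the mere definition of regularity without any further machinery. The only point to be careful about in the write-up is to make clear that Huppert~III.10.7(a) is being applied with regularity as a hypothesis rather than as a conclusion drawn from an order bound, so that the same reasoning goes through for arbitrarily large $|S|$.
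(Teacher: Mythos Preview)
Your proposal is correct and follows essentially the same route as the paper. For the direction ``regular $\Rightarrow$ exponent $p$'' you reproduce the argument of Proposition~\ref{18mar2014} verbatim (Proposition~\ref{30ag2013}(v) gives $\Omega_1(S)=S$, then \cite[III, 10.7 a) Satz]{huppertI1967} forces $\mho_1(S)=1$); for the converse the paper simply cites \cite[III, 10.2 d) Satz]{huppertI1967}, which is exactly the observation you spell out from the definition of regularity.
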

\begin{proof} The proof of Proposition \ref{18mar2014} shows that if $S$ is regular then it has exponent $p$. The converse also holds, see \cite[Chapter III, 10.2 d) Satz]{huppertI1967}.
\end{proof}
\begin{proposition}
\label{31ago2013} If $|S|>p^2$ then none of the subgroups $M_i$ is cyclic.
\end{proposition}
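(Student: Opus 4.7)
The plan is to argue by contradiction, handling separately the two non-semiregular maximal subgroups $M_1, M_2$ and the $p-1$ semiregular ones $M_3, \ldots, M_{p+1}$.

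Suppose first that $M_1$ is cyclic (the argument for $M_2$ is identical). Since $|M_1| = |S|/p \ge p^2$, it has a unique subgroup $N$ of order $p$. By Lemma \ref{eq130apr2013}, every stabilizer $S_P$ with $P \in \Omega_1$ has order $p$ and lies in $M_1$, so $S_P = N$ for every $P \in \Omega_1$; thus a generator $s$ of $N$ fixes the $|\Omega_1| = |S|/p$ points of $\Omega_1$. Substituting into the Deuring--Shafarevich formula for $\langle s\rangle$ yields
\[
(p-2)|S|/p \,=\, \gamma - 1 \,\ge\, p(\bar\gamma - 1) + (|S|/p)(p-1),
\]
so $\bar\gamma \le 1 - |S|/p^2 < 0$ since $|S| > p^2$, a contradiction.

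For a cyclic $M_i$ with $i \ge 3$, I would argue by induction on $n = \log_p |S| \ge 3$. For the inductive step $n \ge 4$, let $N$ be the unique subgroup of order $p$ of the cyclic group $M_i$; being characteristic in the normal subgroup $M_i$, $N$ is normal in $S$, and $|N| = p \le |S|/p^2$. Proposition \ref{a30ag2013} then shows that $\bar\cX = \cX/N$ is again a Nakajima extremal curve acted on by $\bar S = S/N$, whose order $p^{n-1} \ge p^3$ still exceeds $p^2$. Because $N \subseteq \Phi(S) \subseteq M_j$ for every $j$, the correspondence theorem identifies the $p+1$ maximal subgroups of $\bar S$ with $\{M_j/N : j = 1, \ldots, p+1\}$; a short stabilizer computation on $\bar\cX$ (using the semiregularity of $N$ and $|S_P| = p$) shows that $\bar M_j = M_j/N$ is non-semiregular precisely for $j = 1, 2$. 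Thus $\bar M_i$ is a cyclic maximal subgroup of $\bar S$ with $i \ge 3$, contradicting the inductive hypothesis.

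For the base case $n = 3$, Proposition \ref{propbp=3} gives that $S$ is non-abelian of order $p^3$, so by the classification \cite[Chapter I, 14.10 Satz]{huppertI1967} either $S \cong UT(3,p)$ or $S \cong C_{p^2} \rtimes C_p$. The group $UT(3,p)$ has exponent $p$ (for odd $p$), hence contains no cyclic subgroup of order $p^2$, and we are done. In the group $C_{p^2} \rtimes C_p$, a direct inspection shows there is exactly one non-cyclic maximal subgroup (isomorphic to $C_p \times C_p$) and $p$ cyclic ones; but $M_1$ and $M_2$ are distinct and both non-cyclic by the first step, so this case is also impossible. The most delicate step of the whole argument is verifying, in the inductive reduction, that the labelling of the maximal subgroups (semiregular versus non-semiregular) is preserved under the quotient $\cX \to \cX/N$, so that $\bar M_i$ really falls under the inductive hypothesis; the rest of the proof reduces to the Deuring--Shafarevich estimate of the first paragraph and the classification of groups of order $p^3$.
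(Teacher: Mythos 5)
Your proof is correct and follows essentially the same route as the paper: the non-semiregular subgroups $M_1,M_2$ are handled through the observation that the unique order-$p$ subgroup of a putative cyclic $M_i$ would be normal but not semiregular (the paper quotes Proposition \ref{propcp=3} here, while you redo the underlying Deuring--Shafarevich estimate), and the semiregular $M_i$ are handled by induction on $|S|$ via the quotient $\cX/N$, with base case $|S|=p^3$ settled by the classification of groups of order $p^3$. A small bonus of your write-up is that the base case is self-contained: the paper's base case cites Proposition \ref{a31ago2013}, whose own proof refers back to this very proposition, and your explicit count of the cyclic maximal subgroups of $C_{p^2}\rtimes C_p$ (exactly one non-cyclic one, incompatible with $M_1\neq M_2$ both being non-cyclic) is precisely what is needed to resolve that apparent circularity.
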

\begin{proof} For $i=1,2$ the assertion follows from Proposition \ref{propcp=3}. For $3\le i \le p+1$ the proof is by induction on $|S|$. In the smallest case,  $|S|=p^3$, the assertion is a
consequence of Proposition \ref{a31ago2013}. Assume that $M=M_i$ is cyclic for some  $3\le i \le p+1$. Let $T$ be the unique subgroup of $M$ of order $p$. Since $M$ is a normal subgroup of $S$,
$T$ is a normal subgroup of $S$, as well. As $T$ is semiregular, the quotient curve $\bar{\cX}=\cX/T$ is a Nakajima extremal curve with Sylow $p$-subgroup $S/T$. Since $|S/T|=\textstyle\frac{1}{p}|S|$ and  $|M/T|=\textstyle\frac{1}{p}|M|$, the inductive hypothesis yields that $M/T$ is not cyclic. But then $M$ itself is not cyclic.
\end{proof}
\begin{proposition}
\label{proA18feb2014} If at least two of the $p+1$  maximal normal subgroups $M_i$ of $S$ are abelian then $|S|=p^2$ or $|S|=p^3$.
\end{proposition}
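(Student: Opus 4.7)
The base cases $|S|=p^2$ (where $S$ is abelian and all $M_i$'s are trivially abelian) and $|S|=p^3$ (where Proposition~\ref{a31ago2013} gives $S\cong UT(3,p)$, whose maximal subgroups are all elementary abelian of order $p^2$) are automatic, so the plan is to assume $|S|=p^n$ with $n\ge 4$ and two abelian maximal subgroups $M_i,M_j$ among $M_1,\ldots,M_{p+1}$, and derive a contradiction.

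The first step is to locate $\Phi(S)$ inside the center. Since $M_i\cap M_j$ has index $p^2$ in $S$, it equals $\Phi(S)$; and because $M_i,M_j$ are abelian, every element of $\Phi(S)$ is centralized by each of $M_i$ and $M_j$, hence by $S=\langle M_i,M_j\rangle$. Thus $\Phi(S)\subseteq Z(S)$. By Proposition~\ref{propbp=3}, $S$ is non-abelian for $n\ge 4$, so $S/Z(S)$ is non-trivial and cannot be cyclic; combined with $|S/\Phi(S)|=p^2$ from Proposition~\ref{30ag2013}(ii), this forces $Z(S)=\Phi(S)$, so $S$ has nilpotency class exactly $2$.

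Next I would exploit the bilinearity of the commutator. In any class-$2$ group, $(x,y)\mapsto[x,y]$ descends to a bilinear alternating form on $S/Z(S)$ with values in $S'$, and since $S/Z(S)\cong(\mathbb{Z}/p)^2$ is $2$-generated, $S'$ is cyclic, generated by $[a,b]$ for any pair $a,b$ whose images span $S/Z(S)$. By Proposition~\ref{30ag2013}(i)-(ii), $|S'|=|\Phi(S)|=p^{n-2}$. Now pick $a\in M_i\setminus\Phi(S)$ and $b\in S\setminus M_i$; their images in $S/\Phi(S)$ are linearly independent, so $a,b$ generate $S$. The map $\varphi\colon M_i\to S'$, $m\mapsto[m,b]$, is a homomorphism (its image lies in $S'\subseteq Z(S)$), and $\varphi(a)=[a,b]$ generates $S'$, so $\varphi$ is surjective.

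The contradiction is then immediate: on the one hand $|M_i:\ker\varphi|=|S'|=p^{n-2}$, so $|\ker\varphi|=p$; on the other hand $\ker\varphi=C_{M_i}(b)$ contains $Z(S)=\Phi(S)$, which has order $p^{n-2}$. Hence $p^{n-2}\le p$, forcing $n\le 3$. The main technical point is the structural step $Z(S)=\Phi(S)$ together with the cyclicity of $S'$ it implies; once these are in hand, the size comparison via the commutator homomorphism $\varphi$ is routine.
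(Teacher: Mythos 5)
Your proof is correct, but it takes a genuinely different route from the paper. The paper's proof is a two-line combination of quoted results: Huppert's exercise that a $p$-group with two abelian maximal subgroups has class at most $2$, and the theorem of Xu, An and Zhang that a non-abelian $p$-group of order $p^n$ with an abelian maximal subgroup and $[G:G']=p^2$ has maximal class $n-1$; comparing the two gives $n-1\le 2$. You instead give a self-contained argument: from $M_i\cap M_j=\Phi(S)$ and commutativity of $M_i,M_j$ you deduce $\Phi(S)\subseteq Z(S)$, then $Z(S)=\Phi(S)$ since $S/Z(S)$ cannot be cyclic, so $S$ has class $2$ with $S'=\Phi(S)$ cyclic of order $p^{n-2}$ generated by $[a,b]$; the commutator homomorphism $m\mapsto[m,b]$ on $M_i$ then has kernel of order $p$ containing $Z(S)$ of order $p^{n-2}$, forcing $n\le 3$. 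All steps check out (the bilinearity of the commutator and the surjectivity of $\varphi$ are exactly as you say, and Proposition \ref{30ag2013}(i)--(ii) supplies $|S'|=|\Phi(S)|=p^{n-2}$). What your approach buys is independence from the two external references, at the cost of a longer argument; what the paper's approach buys is that the Xu--An--Zhang theorem is reused immediately afterwards to prove Proposition \ref{a4set2013} (a single abelian $M_i$ with $3\le i\le p+1$ forces $S$ to have maximal class), a statement your class-$2$ argument does not yield, since it needs both abelian maximal subgroups to place $\Phi(S)$ in the center.
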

\begin{proof} Assume that $|S|\neq p^2$. From \cite[Chapter I, Aufgabe 21)]{huppertI1967}, every $p$-group with at least two abelian maximal normal subgroup has class at most $2$. On the other hand, if a non abelian group $G$ of order $p^n$ has an abelian maximal normal subgroup and the commutator subgroup of $G$ has index $p^2$ then $G$ has (maximal) class $n-1$; see \cite[Theorem 2.5]{xu2008}. This applies to $S$ in our case by (i) and (ii) of Proposition \ref{30ag2013}. Therefore, $n-1=2$.
\end{proof}
The result on $G$   quoted in the proof of Proposition \ref{proA18feb2014} together with (i) and (ii) of Proposition \ref{30ag2013} also give the following result.
\begin{proposition}
\label{a4set2013} If $M_i$ is abelian for some $3\le i\le p+1$, then $S$ has maximal nilpotency class.
\end{proposition}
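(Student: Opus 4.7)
The plan is to apply \cite[Theorem 2.5]{xu2008} essentially in the same way as in the proof of Proposition \ref{proA18feb2014}, observing that a \emph{single} abelian $M_i$ with $3\le i\le p+1$ already supplies all the hypotheses that theorem requires. I will split the argument according to the size of $S$.

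First I would dispose of the case $|S|=p^2$: by Proposition \ref{propbp=3}, $S$ is elementary abelian, hence of nilpotency class $1=n-1$, which is already the maximal class for a $p$-group of order $p^2$, so there is nothing to prove. For $|S|=p^n$ with $n\ge 3$, Proposition \ref{propbp=3} forces $S$ to be non-abelian. By Proposition \ref{30ag2013}(iii), every $M_i$ is a normal subgroup of $S$ of index $p$, so it is a maximal normal subgroup of $S$, and by hypothesis the chosen $M_i$ is abelian. By Proposition \ref{30ag2013}(i)--(ii), the commutator subgroup satisfies $S'=\Phi(S)$ and $|S:S'|=p^2$.

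At this stage the three hypotheses of Xu's result \cite[Theorem 2.5]{xu2008} are satisfied: $S$ is a non-abelian $p$-group of order $p^n$, it contains an abelian maximal normal subgroup (namely $M_i$), and its commutator subgroup has index $p^2$. The conclusion of that theorem then gives that $S$ has nilpotency class $n-1$, which is the maximal possible class for a $p$-group of order $p^n$. There is no substantive obstacle here: the only point worth stressing is that the index condition on $S'$ and the maximality of $M_i$ as a normal subgroup of index $p$ are not ad hoc but follow directly from the general structural results of Proposition \ref{30ag2013}, so no additional information about the remaining $M_j$'s is needed.
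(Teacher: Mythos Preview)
Your proof is correct and follows essentially the same route as the paper: the paper also appeals to \cite[Theorem 2.5]{xu2008} together with parts (i) and (ii) of Proposition \ref{30ag2013} (this is exactly the result quoted in the proof of Proposition \ref{proA18feb2014}). Your treatment of the trivial case $|S|=p^2$ and the explicit verification of the hypotheses is a bit more detailed than the paper's one-line argument, but the substance is identical.
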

The subgroup $U$ in Theorem \ref{31mag2014} is an abelian subgroup of $S$ of index $p$. Therefore, the proof of Proposition \ref{proA18feb2014} can be used to prove the first assertion.
\begin{proposition}
\label{31mag2014a} The $p$-automorphism group $S$ of the Nakajima extremal curve given in Theorem {\rm \ref{31mag2014}} has maximal nilpotency class.
\end{proposition}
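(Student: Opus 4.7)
The plan is to reduce the statement to the very theorem of Xu already invoked in Proposition~\ref{proA18feb2014}, using the explicit subgroup $U$ of Theorem~\ref{31mag2014} as the required abelian maximal normal subgroup. From the statement of Theorem~\ref{31mag2014}, $S=U\rtimes\langle s\rangle$ with $U$ a direct product of $p-1$ cyclic groups of order $p^N$ and $|\langle s\rangle|=p$, so $|U|=p^{(p-1)N}$ and $|S|=p^{(p-1)N+1}$. Thus $U$ has index $p$ in $S$, and since every index-$p$ subgroup of a $p$-group is normal, $U$ is an abelian maximal normal subgroup of $S$; equivalently, $U$ coincides with one of the $M_i$ listed in Proposition~\ref{30ag2013}(iii).

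First I would dispose of the degenerate possibility that $S$ is abelian. Because $p\geq 3$ and $N\geq 1$, one has $|S|=p^{(p-1)N+1}\geq p^3>p^2$, so Proposition~\ref{propbp=3} forbids $S$ from being abelian. The curves produced by Theorem~\ref{31mag2014} are ordinary Nakajima extremal curves (case (iii) of Theorem~\ref{princ}), so the machinery of Section~\ref{princip} applies; in particular, parts (i) and (ii) of Proposition~\ref{30ag2013} give $S'=\Phi(S)$ and $[S:S']=p^2$.

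At this stage the hypotheses of \cite[Theorem~2.5]{xu2008}, as quoted in the proof of Proposition~\ref{proA18feb2014}, are in place with $G=S$: $S$ is a non-abelian $p$-group of order $p^n$ (here $n=(p-1)N+1$) which admits an abelian maximal normal subgroup (namely $U$) and whose commutator subgroup has index $p^2$. That theorem then forces $S$ to have nilpotency class $n-1$, which is maximal, as required.

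There is essentially no real obstacle. The sole content of the argument is the identification of the explicit abelian subgroup $U$ appearing in the semidirect decomposition of $S$ as an abelian maximal normal subgroup to which Xu's criterion applies; everything else is a direct transcription of the proof of Proposition~\ref{proA18feb2014}, with $U$ replacing the hypothetical abelian $M_i$.
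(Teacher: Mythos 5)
Your proposal is correct and follows exactly the route the paper takes: the paper's own proof simply observes that $U$ is an abelian subgroup of $S$ of index $p$ and then reruns the argument of Proposition~\ref{proA18feb2014}, i.e.\ Xu's criterion applied to a non-abelian $p$-group with an abelian maximal normal subgroup and commutator subgroup of index $p^2$. Your write-up merely makes explicit the details (normality of $U$, non-abelianness of $S$ via Proposition~\ref{propbp=3}, and Proposition~\ref{30ag2013}(i)--(ii)) that the paper leaves implicit.
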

\begin{proof} The subgroup $U$ in Theorem \ref{31mag2014} is an abelian subgroup of $S$ of index $p$. Therefore, the proof of Proposition \ref{proA18feb2014} can be used to prove the assertion.
\end{proof}
\begin{rem}{\em{According to Proposition \ref{pro12dic}, the quotient curves of the curve given in Theorem \ref{31mag2014} are also Nakajima extremal curves. Their $p$-automorphism groups have maximal nilpotency class, as well, by \cite[Section III, 14.2 Hilfssatz]{huppertI1967} }}.
\end{rem}
\begin{proposition}
\label{31mag2014b} The $p$-automorphism group $S$ of the Nakajima extremal curve given in Theorem {\rm\ref{20luglio2014}} has no maximal nilpotency class.
\end{proposition}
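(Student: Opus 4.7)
The plan is to argue by contradiction and exploit the specific structure $\Phi(S)\cong(\mathbb Z/p^N)^{(p-1)^2}$ given by Theorem~\ref{20luglio2014}. Set $n:=N(p-1)^2+2$, so $|S|=p^n$. First I would reduce to the case $N=1$: since $\Phi(S)^p$ is a characteristic normal subgroup of $S$, the quotient $\bar S:=S/\Phi(S)^p$ has order $p^{(p-1)^2+2}$, with $\Phi(\bar S)=\Phi(S)/\Phi(S)^p$ elementary abelian of rank $(p-1)^2$ and $\bar S/\Phi(\bar S)\cong(\mathbb Z/p)^2$. By Huppert \cite[Chapter III, 14.2 Hilfssatz]{huppertI1967}, any quotient of a $p$-group of maximal class is again of maximal class; hence if $S$ had maximal nilpotency class so would $\bar S$, and it suffices to exclude this under the assumption that $\Phi(S)$ is elementary abelian.

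Under this reduction, write $V:=\Phi(S)$, an $\mathbb F_p$-vector space of dimension $(p-1)^2$ on which $U:=S/\Phi(S)\cong(\mathbb Z/p)^2$ acts by conjugation. Let $R:=\mathbb F_p[U]$ and let $\mathfrak m$ denote its augmentation ideal. Since $V$ is abelian, the identity $[g,v]=(g-1)v$ for $g\in U$, $v\in V$, together with a straightforward induction, yields $\gamma_i(S)=\mathfrak m^{i-2}V$ for every $i\ge 2$, where $\gamma_i(S)$ denotes the $i$-th term of the lower central series of $S$. Maximal nilpotency class forces $|\gamma_i(S)/\gamma_{i+1}(S)|=p$ for $2\le i\le n-1$, equivalently $\dim_{\mathbb F_p}(\mathfrak m^kV/\mathfrak m^{k+1}V)=1$ for $0\le k\le(p-1)^2-1$, so $V$ must be a uniserial $R$-module of length $(p-1)^2$.

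The crux is to prove that every uniserial $R$-module has length at most $p$; combined with $(p-1)^2>p$ for $p\ge 3$ (since $p^2-3p+1\ge 1$ for $p\ge 3$), this gives the required contradiction. The argument combines two observations: any $\xi\in\mathfrak m$ satisfies $\xi^p=0$ in $R$, because $R$ is a commutative $\mathbb F_p$-algebra and $(u_i-1)^p=u_i^p-1=0$; and the union of $p$ $\mathbb F_p$-hyperplanes in an $\mathbb F_p$-vector space of dimension $d$ contains at most $1+p(p^{d-1}-1)<p^d$ points, hence cannot exhaust the space. For a uniserial module $M$ of length $k$ with $M_i:=\mathfrak m^iM$, each kernel $K_i\subseteq\mathfrak m$ of the shift map $\xi\mapsto(\xi\cdot-)\colon M_{i-1}/M_i\to M_i/M_{i+1}$ is an $\mathbb F_p$-hyperplane, the map being non-zero because $\mathfrak m M_{i-1}=M_i$. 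If $k\ge p+1$, a choice of $\xi\in\mathfrak m\setminus(K_1\cup\cdots\cup K_p)$ makes $\xi^j m$ successively generate the non-zero $M_j/M_{j+1}$ for $j=1,\ldots,p$, so $\xi^pm\ne 0$, contradicting $\xi^p=0$. The main obstacle is precisely this uniserial length bound; once the Frobenius vanishing $\xi^p=0$ and the hyperplane count are set up, both the reduction to $N=1$ through Huppert's Hilfssatz and the identification of the lower central series with the $\mathfrak m$-adic filtration become routine.
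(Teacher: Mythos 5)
Your proof is correct, but it takes a genuinely different route from the paper's. The paper's argument is essentially two lines: since $\Phi(S)\cong(\mathbb{Z}/p^N\mathbb{Z})^{(p-1)^2}$ needs $(p-1)^2>p-1$ generators, it invokes \cite[Theorem 5.2]{berkovich2002}, by which a $p$-group of maximal class possessing a subgroup that cannot be generated by $p-1$ elements must have order exactly $p^{p+1}$ (and be the Sylow $p$-subgroup of the symmetric group of degree $p^2$); the resulting equation $N(p-1)^2+2=p+1$ is impossible. You instead pass to $S/\Phi(S)^p$ (legitimate: for $p$-groups $\Phi(S/K)=\Phi(S)K/K$, and quotients of maximal-class groups are again of maximal class, the same Huppert Hilfssatz the paper uses after Proposition \ref{31mag2014a}), identify the lower central series below $\Phi$ with the radical filtration of $\Phi(S)/\Phi(S)^p$ as a module over $\mathbb{F}_p[C_p\times C_p]$ --- valid because $\gamma_2(S)=\Phi(S)$ by Proposition \ref{30ag2013}(i) and $\Phi(S)$ is abelian, so the conjugation action factors through $S/\Phi(S)$ --- and then prove from scratch that a module over this algebra with one-dimensional radical layers has length at most $p$, using $\xi^p=0$ for $\xi$ in the augmentation ideal together with the fact that $p$ proper subspaces cannot cover $\mathbb{F}_p^{p^2-1}$. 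I checked the individual steps: maximal class does force $|\gamma_i/\gamma_{i+1}|=p$ for $i\ge 2$, the kernels $K_i$ are genuinely hyperplanes because $\mathfrak{m}M_{i-1}=M_i\neq M_{i+1}$, and $(p-1)^2>p$ holds for all $p\ge 3$. What your version buys is self-containedness: you reprove, in exactly the special case needed, the piece of Blackburn--Berkovich structure theory that the paper imports as a black box. The cost is length; the paper's citation also identifies the unique exceptional group, which your argument never needs.
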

\begin{proof} From Theorem \ref{20luglio2014}, the minimum size of a generator set of $\Phi(S)$ is $(p-1)^2$. Since $(p-1)^2>p-1$, $\Phi(S)$ cannot be generated by $p-1$ elements. If $S$ has maximal nilpotency class, this implies that $S$ must be of order $p^{p+1}$ and isomorphic to the Sylow $p$-subgroup of the symmetric group of degree $p^2$, see \cite[Theorem 5.2]{berkovich2002}. Since $|S|=p^{N(p-1)^2+2}$, this yields $N(p-1)^2+2=p+1$, a contradiction which proves the assertion.
\end{proof}
By \cite[Chapter III, 14.22 Satz]{huppertI1967}, any $p$-group of maximal nilpotency class and order bigger than $p^{p+1}$ has exactly one maximal subgroup which is a regular $p$-group. This subgroup, called the \emph{fundamental subgroup}, plays a relevant role in the study of $p$-groups.
\begin{proposition}
\label{31mag2014c} Let $S$ be the $p$-automorphism group of a Nakajima extremal curve such that $S$ has maximal nilpotency class and order bigger than $p^{p+1}$. If $s\in S$ is an element of order $p$ then number of fixed points of $s$ is either zero, or $p$. Accordingly, the relative quotient curve $\cZ=\cX/\langle s \rangle$  of $\cX$ has genus
\begin{equation}
\label{eq31mag2014}
\gg(\cZ)= \left\{
\begin{array}{lll}
(p-2)p^{n-2}+1,\\
(p-2)p^{n-2}-(p-1)+1.
\end{array}
\right.
\end{equation}
\end{proposition}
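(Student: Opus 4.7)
The plan is to reduce the claim to a centralizer computation in a maximal-class $p$-group. Any fixed point of a non-trivial $s\in S$ lies in $\Omega_1\cup\Omega_2$ since stabilizers of long orbits are trivial, and by Lemma \ref{eq130apr2013} each such $P$ satisfies $S_P=\langle s\rangle$. If $s\in Z(S)$, then $Z(S)$ is semiregular by Proposition \ref{propgp=3}, so the number $\lambda$ of fixed points of $s$ is zero. Assume $s\notin Z(S)$ for the rest.

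I would first show that $s$ fixes points in at most one of $\Omega_1,\Omega_2$. If $P\in\Omega_1$, then $\langle s\rangle=S_P\subseteq M_1\setminus\Phi(S)$: otherwise $s\in\Phi(S)=M_1\cap M_2\subseteq M_j$ for some $3\le j\le p+1$, contradicting that $M_j$ is semiregular. The analogous statement holds for $\Omega_2$ and $M_2$, and the sets $M_1\setminus\Phi(S)$ and $M_2\setminus\Phi(S)$ are disjoint. Suppose $s\in M_1\setminus\Phi(S)$. Since in a $p$-group $N_S(\langle s\rangle)=C_S(s)$ for any subgroup of order $p$, and $C_S(s)$ acts transitively on the set of $\Omega_1$-points fixed by $s$ with point stabilizer $\langle s\rangle$, we obtain $\lambda=|C_S(s)|/p$.

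The key step is to show $|C_S(s)|=p^2$, i.e.\ $C_S(s)=\langle s\rangle Z(S)$. By the structure theorems for $p$-groups of maximal class of order $p^n\geq p^{p+2}$ (Huppert III.14), the fundamental subgroup $K_1$ is the unique regular maximal subgroup of $S$, is characteristic of index $p$, and any non-central $s\in S\setminus K_1$ has centralizer $\langle s\rangle Z(S)$ of order $p^2$. I would then show $K_1\ne M_1$ (the case $K_1\ne M_2$ is symmetric): the stabilizers of $\Omega_1$-points generate a normal subgroup $N$ of $S$ contained in $M_1$; since some $S_P\subseteq N$ lies in $M_1\setminus\Phi(S)$, Proposition \ref{b30ago2013} forces $|N|>|S|/p^2$, hence $N=M_1$. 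If $K_1=M_1$, then $M_1$ is regular and generated by order-$p$ elements, so $M_1$ has exponent $p$, which forces $\exp S\le p^2$ and contradicts the standard bounds on the exponent of maximal-class $p$-groups of order $>p^{p+1}$. Thus $K_1\cap M_1=\Phi(S)$, so $s\in M_1\setminus\Phi(S)$ lies outside $K_1$, whence $|C_S(s)|=p^2$ and $\lambda=p$.

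Finally, I apply the Hurwitz formula to $\langle s\rangle$. Since $\cX$ is ordinary by Proposition \ref{propap=3}, the second ramification group $S_P^{(2)}$ is trivial at each fixed point, so $d_P=2(p-1)$; combined with $\gg-1=(p-2)p^{n-1}$ this yields
$$p\,\gg(\cZ)=(p-2)p^{n-1}+p-(p-1)\lambda,$$
which gives $\gg(\cZ)=(p-2)p^{n-2}+1$ for $\lambda=0$ and $\gg(\cZ)=(p-2)p^{n-2}-(p-1)+1$ for $\lambda=p$, matching (\ref{eq31mag2014}). The main obstacle is verifying $K_1\notin\{M_1,M_2\}$, which is exactly where the hypothesis $|S|>p^{p+1}$ is used essentially through the structural theory of maximal-class $p$-groups.
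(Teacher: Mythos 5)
Your argument follows the same route as the paper's proof: the number of fixed points of $s$ is $|C_S(s)|/p$, the fundamental subgroup $K_1$ is shown to be different from $M_1$ and $M_2$, Blackburn's theorem then gives $|C_S(s)|=p^2$ for $s\notin K_1$, and a genus formula finishes the computation. The fixed-point count via $N_S(\langle s\rangle)=C_S(s)$, the use of Proposition \ref{b30ago2013} to identify the normal closure of the point stabilizers with $M_1$, and the concluding computation (the paper uses the Deuring--Shafarevich formula where you use Hurwitz; the two are equivalent here since the curve is ordinary and the second ramification groups vanish) are all sound.

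The one step that fails is the contradiction you draw when ruling out $K_1=M_1$. From $\exp M_1=p$ you pass to the weaker statement $\exp S\le p^2$ and assert that this contradicts ``the standard bounds on the exponent of maximal-class $p$-groups of order $>p^{p+1}$.'' No such bound exists in the range $p+2\le n\le 2p-1$: regularity of $K_1$ together with $|\Omega_1(K_1)|\le p^{p-1}$ only yields $\exp S\ge p^{\lceil (n-1)/(p-1)\rceil}$, which is exactly $p^2$ throughout that range. A counterexample occurs in this very paper: the group $S(243,26)$ appearing in the treatment of the case $|S|=81$ has maximal nilpotency class, order $3^5>3^{p+1}$, and exponent $9=p^2$ (its $170$ elements of order $3$ together with the identity leave exactly $72$ elements, all of order $9$, lying in the maximal subgroup $M_1\cong C_9\times C_9$). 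So $\exp S\le p^2$ is not in itself contradictory. The contradiction must be extracted from the full strength of what you already derived: if $K_1=M_1$ had exponent $p$, then $K_1=\Omega_1(K_1)$, whereas \cite[Chapter III, 14.16 Satz]{huppertI1967} forces $|\Omega_1(K_1)|=p^{p-1}<p^{n-1}=|K_1|$ once $|S|>p^{p+1}$. This is precisely how the paper closes this step; with that repair your proof is complete.
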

\begin{proof} If $s$ has no fixed point in $\Omega$, then the Deuring-Shafarevich formula shows that
$\gg(\cZ)=(p-2)p^{n-2}+1$. Therefore, we focus on an element $s\in S$ which fixes a
point in $\Omega$. Then $s\in M_1$ or $s\in M_2$, according as the set $\Omega_s$ of the fixed points of $s$ is contained in $\Omega_1$ or in $\Omega_2$. Assume that $\Omega_s\subset \Omega_1$, and let $P_1,P_2$ be any two distinct points in $\Omega_s$. Since $\Omega_1$ is an $S$-orbit, there exists $h\in S$ that takes $P_1$ to $P_2$. Then $hsh^{-1}$ fixes $P_1$, and Lemma \ref{eq130apr2013} implies that either $hsh^{-1}=s$ or $hsh^{-1}=s^{-1}$. The latter case cannot actually occur as in a $p$-group a non-trivial element and its inverse are in different conjugacy classes. Therefore, $h$ is in the centralizer $C_S(s)$ of $s$. The converse also holds. Thus $p|\Omega_s|=|C_S(s)|$.

We show that the fundamental subgroup of $S$ is neither $M_1$ nor $M_2$. Assume on the contrary that it is $M_1$. The argument at the beginning of the proof of Proposition \ref{aprop15may2014} shows that $M_1$ is generated by its elements of order $p$. Since $M_1$ is a regular $p$-group, \cite[Chapter III, 10.7 a) Satz]{huppertI1967} shows that $M_1$ has exponent $p$.  Now, the last claim of \cite[Chapter III, 14.16 Satz]{huppertI1967} yields $|M_1|=p^{p-1}$, a contradiction. Therefore, one of the other subgroups, say $M_3$, is the fundamental subgroup of $S$, and $s\in S\setminus M_3$. By \cite{blackburn1958b}, see also \cite[Remark 4]{berkovichbook}, this yields that $|C_S(s)|=p^2$. Hence, $|\Omega_s|=p$. Finally, the Deuring-Shafarevich formula shows that $\gg(Z)=(p-2)p^{n-2}-(p-1)+1$.
\end{proof}
The converse of Proposition \ref{31mag2014c} also holds.
\begin{proposition}
\label{31mag2014c1} Let $S$ be the $p$-automorphism group of a Nakajima extremal curve with $|S|=p^n,\, n\geq3$. If some element $s\in S$ has exactly $p$ fixed points, then $S$ has maximal nilpotency class.
\end{proposition}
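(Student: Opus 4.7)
The plan is to translate the geometric hypothesis $|\Omega_s|=p$ into a statement about the size of the centralizer $C_S(s)$, and then appeal to a classical characterization of $p$-groups of maximal class.

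For the translation, I would first note that, since $s$ is non-trivial and has a fixed point, Lemma \ref{eq130apr2013} forces $s$ to have order $p$ and all of its fixed points to lie in $\Omega_1\cup\Omega_2$. Next, I would show that $\Omega_s$ is entirely contained in one of the two short orbits: if $s$ had fixed points in both $\Omega_1$ and $\Omega_2$, then $\langle s\rangle\subseteq S_{Q_1}\cap S_{Q_2}\subseteq M_1\cap M_2=\Phi(S)$ by Proposition \ref{30ag2013}, so $s$ would lie in every maximal subgroup of $S$, including the semiregular ones $M_3,\dots,M_{p+1}$, contradicting the assumption that $s$ has fixed points on $\cX$. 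Assuming without loss of generality $\Omega_s\subseteq\Omega_1$, the centralizer--orbit argument already carried out in the proof of Proposition \ref{31mag2014c} applies verbatim: $C_S(s)$ acts transitively on $\Omega_s$ with point stabilizer $\langle s\rangle$, so $|C_S(s)|=p\cdot|\Omega_s|=p^2$.

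The bulk of the proof is then the following purely group-theoretic fact, which can be cited from \cite{huppertI1967} or \cite{berkovichbook}: a finite $p$-group $G$ of order $p^n$ with $n\geq 3$ which contains an element whose centralizer has order $p^2$ is of maximal nilpotency class. If a self-contained argument is preferred, I would proceed by induction on $n$. The base case $n=3$ is immediate, since $|C_G(s)|=p^2<|G|$ forces $G$ to be non-abelian and any non-abelian $p$-group of order $p^3$ has class $n-1=2$. For $n\geq 4$, from $Z(G)\subseteq C_G(s)$ and $s\notin Z(G)$ one deduces $|Z(G)|=p$; the map $g\mapsto[g,s]$ restricted to the subgroup $H=\{g\in G\mid [g,s]\in Z(G)\}$ is a homomorphism to $Z(G)$ with kernel $C_G(s)$, so $|H|\leq p^3$ and the image of $s$ in $\bar G=G/Z(G)$ has centralizer of order at most $p^2$. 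A short argument (ruling out $|C_{\bar G}(\bar s)|=p$ via $Z(\bar G)\neq 1$) forces equality, and the inductive hypothesis then yields that $\bar G$ has maximal class of order $p^{n-1}$, which combined with $|Z(G)|=p$ shows that $G$ itself has maximal class.

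The main obstacle is really just isolating the right group-theoretic fact; once it is in hand, the bridge from the geometric hypothesis $|\Omega_s|=p$ to the centralizer condition is a two-line computation using Lemma \ref{eq130apr2013}, Proposition \ref{30ag2013}, and the orbit--centralizer identity already established in the proof of Proposition \ref{31mag2014c}.
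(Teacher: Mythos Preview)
Your proposal is correct and follows essentially the same route as the paper: both translate $|\Omega_s|=p$ into $|C_S(s)|=p^2$ via the orbit--centralizer identity from the proof of Proposition~\ref{31mag2014c}, and then invoke the classical fact (Huppert, Chapter~III, Satz~14.23) that a $p$-group of order $p^n$ containing an element with centralizer of order $p^2$---equivalently, a conjugacy class of size $p^{n-2}$---has maximal class. Your extra care in arguing that $\Omega_s$ lies in a single short orbit, and your optional self-contained induction for the group-theoretic step, are welcome elaborations but not a different approach.
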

\begin{proof} The first part of the proof of Proposition \ref{31mag2014c} also shows that if an element $s\in S$ has exactly $p$ fixed points then $|C_S(s)|=p^2$. The latter condition means that
the conjugacy class of $s$ in $S$ has size $p^{n-2}$. Therefore, the claim follows from  \cite[Chapter III, 14.23 Satz]{huppertI1967}.
\end{proof}
\section{Proof of Theorem \ref{fullaut}}
\begin{lemma}
\label{lemagosto9C} Let $N$ be a normal subgroup of $\aut(\cX)$ such that the quotient curve $\bar{\cX}=\cX/N$ is neither rational nor elliptic. Then the order of $N$ is a power of $p$. Furthermore,  $\bar{\cX}$ is an extremal Nakajima curve provided that its genus is bigger than $p-1$.
\end{lemma}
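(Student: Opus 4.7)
The plan is to combine the Nakajima bound on a natural $p$-subgroup of $\aut(\bar\cX)$ with the Hurwitz genus formula applied to $N$. Setting $Q = N \cap S$, standard Sylow theory shows that $Q$ is a Sylow $p$-subgroup of $N$, and conjugation by elements of $S$ stabilizes both $N$ and $S$, so $Q$ is in fact normal in $S$; the quotient $\bar S := S/Q$ then embeds in $\aut(\bar\cX)$ as a $p$-subgroup of order $|S|/|Q|$.

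As a preliminary observation, I would verify that $\cX/S$ is rational: this is a direct Hurwitz computation from the data in Proposition \ref{propap=3} (ordinariness of $\cX$, two short orbits each of size $|S|/p$, and trivial higher ramification). Hence if $S \subseteq N$ then the inclusion of fixed fields $\K(\cX)^N \subseteq \K(\cX)^S$ would realize $\bar\cX$ as dominated by the rational curve $\cX/S$, forcing $\bar\cX$ to be rational by L\"uroth, contrary to hypothesis. Therefore $\bar S$ is non-trivial.

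For the first claim, I would then establish the chain
\[
\frac{|S|}{|Q|} \;=\; |\bar S| \;\leq\; \frac{p}{p-2}(\bar\gg-1) \;\leq\; \frac{p}{p-2}\cdot\frac{\gg-1}{|N|} \;=\; \frac{|S|}{|N|},
\]
from which $|N| \leq |Q|$ and hence $N = Q \subseteq S$ is immediate. The first inequality is the Nakajima bound (\ref{naka16feb2013}) applied to $\bar S$, the second is the Hurwitz formula for the cover $\cX \to \bar\cX$, and the final equality uses $\gg - 1 = (p-2)p^{n-1}$. The main obstacle is verifying the hypothesis $\bar\gamma \geq 2$ required for the Nakajima bound; I would handle this by invoking Nakajima's theorem that a quotient of an ordinary curve by any finite group of automorphisms is again ordinary, so that $\bar\gamma = \bar\gg \geq 2$, using that $\bar\cX$ is neither rational nor elliptic.

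For the second claim, $N$ is now a normal $p$-subgroup of $S$ whose quotient is not rational. By Proposition \ref{propcp=3} there are only two possibilities: $N$ is semiregular, or $|N| = |S|/p$ with $S = N \rtimes S_P$. In the latter alternative, $N$ is a non-semiregular maximal subgroup of $S$ and must therefore coincide with one of $M_1$ or $M_2$, so $\bar\cX$ would be rational by Proposition \ref{pro27luglio2014}, contradicting the hypothesis. Hence $N$ is semiregular, the extension $\K(\cX)|\K(\bar\cX)$ is unramified, and the Hurwitz formula yields $\bar\gg - 1 = (p-2)p^{n-1}/|N|$. The hypothesis $\bar\gg > p - 1$ then translates to $|N| \leq |S|/p^2$, which is precisely the assumption of Proposition \ref{a30ag2013}, yielding that $\bar\cX$ is a Nakajima extremal curve.
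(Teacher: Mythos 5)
Your proposal is correct and, for the main claim, follows essentially the same route as the paper: pinching $|N|$ between the Nakajima bound for $S/(S\cap N)$ acting on $\bar{\cX}=\cX/N$ and the Hurwitz bound for $N$ forces $N=S\cap N$. The differences are only supplementary: you justify $\bar{\gamma}\geq 2$ by the ordinarity of quotients of ordinary curves (a hypothesis the paper's appeal to the Nakajima bound leaves implicit), and you obtain the second claim via Propositions \ref{propcp=3}, \ref{pro27luglio2014} and \ref{a30ag2013} rather than by reading it off from the equality case of the chain of inequalities; both steps are valid.
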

\begin{proof} Let $|N|=ap^b$ with $a$ prime to $p$. We may assume that $S\cap N$ is a Sylow subgroup of $N$. From the Hurwitz genus formula applied to $N$, $\gg-1=p^{n-1}(p-2)\geq a p^b(\bar{\gg}-1)$. On the other hand, since $SN/N\cong S/S\cap N$ is a $\mathbb{K}$-automorphism group of the quotient curve $\bar{\cX}=\cX/N$ whose order is $p^{n-b}$, the Nakajima bound gives $p^{n-b-1}(p-2)\leq \bar{\gg}-1.$
Then,
$$\textstyle\frac{p-2}{a}p^{n-1-b}\geq \bar{\gg}-1\geq p^{n-1-b}(p-2).$$
Therefore $a=1$ and this proves the assertion.
\end{proof}
\begin{lemma}
\label{lemagosto9D} Let $N$ be a normal subgroup of $\aut(\cX)$ such that the quotient curve $\bar{\cX}=\cX/N$ is rational. Then the order of $N$ is a divisible by $p^{n-1}.$
\end{lemma}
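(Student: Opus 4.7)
The plan is to write $|N|=ap^b$ with $\gcd(a,p)=1$ and show $b\geq n-1$; combined with $\gcd(a,p)=1$, this gives $p^{n-1}\mid|N|$. Put $T:=S\cap N$, which is a Sylow $p$-subgroup of $N$ of order $p^b$, and since $N\trianglelefteq\aut(\cX)$ it is a normal subgroup of $S$.

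First, I would establish the weaker bound $b\geq n-2$. The quotient $\bar S:=SN/N\cong S/T$ embeds in $\aut(\bar\cX)=\PGL(2,\K)$. A finite $p$-subgroup of $\PGL(2,\K)$ in characteristic $p$ is elementary abelian, since its centre fixes a point of $\mathbb{P}^1$, forcing the whole group into the unipotent radical $(\K,+)$ of the associated Borel subgroup of $\PGL(2,\K)$. So $\bar S$ is an elementary abelian quotient of $S$, hence a quotient of $S/\Phi(S)$; by Proposition~\ref{30ag2013}(ii), $|\bar S|\leq|S/\Phi(S)|=p^2$ and therefore $|T|\geq p^{n-2}$.

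It remains to exclude $b=n-2$. Assume it; then $S/T$ is elementary abelian of order $p^2$, forcing $T=\Phi(S)$. Since $|T|\neq|S|/p$, Proposition~\ref{propcp=3} applied to $T\trianglelefteq S$ yields that $T$ is semiregular on $\cX$. The group $\bar S\cong(\mathbb{F}_p)^2$ acts on $\bar\cX=\mathbb{P}^1$ fixing a unique point $\bar\infty$ and as free translations elsewhere, so each $\bar S$-orbit has size $1$ or $p^2$. The images $\bar\Omega_1,\bar\Omega_2$ of the two short $S$-orbits are $\bar S$-orbits. If $|\bar\Omega_i|=p^2$, then for $P\in\Omega_i$ the stabilizer $S_P$ of order $p$ projects trivially to $\bar S$, so $S_P\subseteq T$, contradicting the semiregularity of $T$. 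Therefore $\bar\Omega_1=\bar\Omega_2=\{\bar\infty\}$, and both $\Omega_1,\Omega_2$ lie in the fibre $\mathcal F_\infty$ of $\cX\to\bar\cX$ over $\bar\infty$.

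Since the Sylow $p$-subgroups of $N$ are $N$-conjugates of $T$, they are all semiregular; hence the cover $\cX\to\bar\cX$ is tamely ramified, and $\mathcal F_\infty$ is an $N$-orbit of size $(a/e_\infty)p^{n-2}$ with $e_\infty=|N_P|$ coprime to $p$. On the other hand, $\mathcal F_\infty$ is $S$-invariant (as $\bar\infty$ is $\bar S$-fixed) and decomposes as $\Omega_1\sqcup\Omega_2$ together with some $m\geq 0$ regular $S$-orbits of size $p^n$, giving $|\mathcal F_\infty|=2p^{n-1}+mp^n=p^{n-1}(2+pm)$. Comparing the two expressions yields $a=e_\infty p(2+pm)$, contradicting $\gcd(a,p)=1$. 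The delicate step is precisely this final divisibility clash: the fibre size is divisible by $p^{n-2}$ (from the free $T$-action) but the two short $S$-orbits it contains contribute $p^{n-1}$, leaving an extra factor of $p$ that has nowhere to sit in $a$.
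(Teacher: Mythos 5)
Your proof is correct, but it follows a genuinely different route from the paper's. The paper's argument is a two-line ramification computation: apply the Hurwitz genus formula to $N$ with $2\gg(\bar{\cX})-2=-2$, and observe that, since $S$ normalizes $N$, the different exponent $d_P$ of $\K(\cX)|\K(\bar{\cX})$ is constant on $S$-orbits, so $\sum_P d_P=p^{n-1}(d_P+d_Q)+\kappa p^n$; hence $2|N|\equiv 0 \pmod {p^{n-1}}$ and, as $p$ is odd, $p^{n-1}\mid |N|$. You instead bound the Sylow $p$-part of $N$ from below group-theoretically: $S/(S\cap N)$ is an elementary abelian $p$-subgroup of $\PGL(2,\K)$, hence a quotient of $S/\Phi(S)$ of order at most $p^2$, and you then kill the borderline case $|S\cap N|=p^{n-2}$ by locating both short orbits inside the single fibre over the fixed point of $\bar{S}$ on $\mathbb{P}^1$ and comparing the two expressions for its cardinality. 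Both arguments are sound; the paper's is shorter, needs nothing beyond Proposition~\ref{propap=3}, and does not use that $S$ is a Sylow subgroup of $\aut(\cX)$, whereas yours does need that (so that $S\cap N$ is Sylow in $N$) and additionally invokes Propositions~\ref{propcp=3} and~\ref{30ag2013} --- in exchange it exhibits concretely where the obstruction sits, namely that $\Omega_1\cup\Omega_2$ would have to fill a fibre of size $(a/e_\infty)p^{n-2}$ prime to $p^{n-1}$. One small point to flag: for $n=2$ the group $T=S\cap N$ may be trivial, in which case Proposition~\ref{propcp=3} does not literally apply; semiregularity is then vacuous and your argument still goes through, but the edge case deserves a word.
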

\begin{proof} By Proposition \ref{propap=3} $S$ has two short orbits, $\Omega_1$ and $\Omega_2$, both of size $p^{n-1}$. Since $S$ normalizes $N$, the Hurwitz genus formula applied to $N$ gives
$$2\gg-2=2(p-2)p^{n-1}=-2|N|+p^{n-1}(d_P+d_Q)+\kappa p^n$$
with $P\in \Omega_1,\,Q\in\Omega_2$ and $\kappa$ a non-negative integer. From this the assertion follows.
\end{proof}
To obtain a similar result for the case where $\bar{\cX}$ is elliptic, we need some technical results.
\begin{lemma}
\label{lem7agosto2014}
Assume that $S$ is not a normal subgroup of $\aut(\cX)$ and that $T$ is a Sylow $p$-subgroup of $\aut(\cX)$ other than $S$. If there exists a point $P\in \Omega_1$ fixed by a non-trivial element of $T$ then no point in $\Omega_2$ is fixed by a non-trivial element of $T$.
\end{lemma}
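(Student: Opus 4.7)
The plan is a contradiction argument. I would suppose that, besides the given non-trivial $t_1\in T$ fixing some $P\in\Omega_1$, there is also a non-trivial $t_2\in T$ fixing some $Q\in\Omega_2$. The aim is to show that both $t_1$ and $t_2$ actually lie in $S$: once this is established, Proposition \ref{30ag2013}(v) applied to $t_1,t_2$ yields $\langle t_1,t_2\rangle = S$; but $\langle t_1,t_2\rangle\subseteq T$, so $S\subseteq T$, and equality of orders forces $S=T$, contradicting the hypothesis $T\neq S$.

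The key technical step is to show that for every point $R\in\cX$ fixed by some non-trivial element of $p$-power order in $\aut(\cX)$, the wild inertia $I_R$ --- the unique Sylow $p$-subgroup of the stabilizer $\aut(\cX)_R$ --- has order exactly $p$. To see this, I would embed $I_R$ into a Sylow $p$-subgroup $S''$ of $\aut(\cX)$. Then $(S'')_R=S''\cap\aut(\cX)_R$ is a $p$-subgroup of $\aut(\cX)_R$, hence contained in $I_R$; the reverse containment is automatic, so $(S'')_R=I_R$. As $I_R\neq 1$, the point $R$ lies in a short orbit of $S''$. Since all Sylow $p$-subgroups of $\aut(\cX)$ are conjugate and since Proposition \ref{propap=3} together with Lemma \ref{eq130apr2013} depend only on the curve $\cX$, the analogous structural statements hold for $S''$ in place of $S$; in particular the stabilizer of any short-orbit point of $S''$ has order $p$, so $|I_R|=p$.

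Applying this claim at $R=P$, both $S_P$ and $T_P$ are subgroups of $I_P$ of order $p$ and must therefore coincide with $I_P$; in particular $t_1\in S_P\subseteq S$. The same reasoning at $Q$ gives $t_2\in S_Q\subseteq S$, and the argument concludes as outlined in the first paragraph. The only delicate point is the transfer of Lemma \ref{eq130apr2013} and Proposition \ref{propap=3} from $S$ to a generic Sylow $S''$: this rests on the observation that the derivation of those results in Section \ref{princip} uses only properties intrinsic to $\cX$, and therefore survives replacing $S$ by any of its $\aut(\cX)$-conjugates.
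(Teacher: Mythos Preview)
Your argument is correct and follows essentially the same route as the paper's proof: both establish that the $p$-part of the stabilizer at $P$ (and at $Q$) has order exactly $p$ by embedding it in a Sylow $p$-subgroup and invoking Lemma~\ref{eq130apr2013}, conclude that $S_P=T_P$ and $S_Q=T_Q$, and then apply Proposition~\ref{30ag2013}(v) to deduce $S=\langle S_P,S_Q\rangle\subseteq T$, contradicting $S\neq T$. Your version is simply more explicit about the transfer of the structural results to an arbitrary Sylow $p$-subgroup via conjugacy, which the paper leaves implicit.
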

\begin{proof} Let $G=\aut(\cX)$. In $G_P$, all $\mathbb{K}$-automorphisms of order a power of $p$ lie in the first ramification group $G_P^{(1)}$. Obviously, $G_P^{(1)}$ contains both $S_P$ and $T_P$. Actually $S_P=T_P$ must hold by virtue of Lemma \ref{eq130apr2013} applied to a Sylow $p$-subgroup of $\aut(\cX)$ containing $G_P^{(1)}$.
Assume on the contrary the existence of a point $Q\in \Omega_2$ fixed by a non-trivial element of $T$. As before this yields $S_Q=T_Q$. Hence $\langle S_P,S_Q \rangle =\langle T_P,T_Q \rangle$. By (v) of Proposition \ref{30ag2013}, $S=\langle S_P,S_Q \rangle$. Therefore, $S\leq T$. Since $S$ and $T$ are  Sylow $p$-subgroups of $\aut(\cX)$, this yields $S=T$.
\end{proof}
\begin{lemma}
\label{lem8agostoA2014}
If a Sylow $p$-subgroup $T$ of $\aut(\cX)$ preserves $\Omega_1\cup \Omega_2$ then it does both $\Omega_1$ and $\Omega_2$.
\end{lemma}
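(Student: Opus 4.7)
My plan is to reduce to the case $T=S$ by showing that no other Sylow $p$-subgroup can preserve $\Omega_1 \cup \Omega_2$. Once this reduction is in place, the statement is immediate, since $\Omega_1$ and $\Omega_2$ are by definition the two short orbits of $S$ on $\cX$.

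First, if $T=S$, there is nothing to prove. So assume $T \neq S$. Then $S$ is not normal in $\aut(\cX)$, and Lemma~\ref{lem7agosto2014} is available. Note $|T|=|S|=p^n$ while $|\Omega_1 \cup \Omega_2|=2p^{n-1}$, and the strict inequality $2p^{n-1} < p^n$ holds because $p \geq 3$. I would then split according to whether some nontrivial element of $T$ fixes a point of $\Omega_1 \cup \Omega_2$.

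If no nontrivial element of $T$ fixes a point of $\Omega_1 \cup \Omega_2$, then $T$ acts semiregularly there, so every $T$-orbit on this set has size $|T|=p^n$; but $p^n > 2p^{n-1}$, and no such orbit can fit inside $\Omega_1 \cup \Omega_2$, a contradiction. Otherwise, up to swapping the labels, some nontrivial element of $T$ fixes a point of $\Omega_1$. By Lemma~\ref{lem7agosto2014}, no point of $\Omega_2$ is then fixed by a nontrivial element of $T$. Hence for any $Q \in \Omega_2$ the stabilizer $T_Q$ is trivial, so the $T$-orbit of $Q$ has size $p^n$. Since $T$ preserves $\Omega_1 \cup \Omega_2$ by assumption, this orbit must lie in $\Omega_1 \cup \Omega_2$, again violating $p^n > 2p^{n-1}$. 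Both cases being impossible, $T=S$, and the conclusion follows.

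The heart of the argument is the orbit-size inequality $|T|=p^n > 2p^{n-1} = |\Omega_1 \cup \Omega_2|$, which is valid precisely because $p \geq 3$. There is no real obstacle beyond correctly invoking Lemma~\ref{lem7agosto2014}; the only minor thing to verify is that $T\neq S$ already forces $S$ to be non-normal in $\aut(\cX)$, which is automatic since otherwise $S$ would be the unique Sylow $p$-subgroup.
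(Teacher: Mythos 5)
Your proof is correct and follows essentially the same route as the paper: reduce to $T\neq S$, use the counting $|T|=p^n>2p^{n-1}=|\Omega_1\cup\Omega_2|$ to force a non-trivial stabilizer in each of $\Omega_1$ and $\Omega_2$, and derive a contradiction from Lemma~\ref{lem7agosto2014}. The paper's proof is just a one-line appeal to that lemma; you have supplied exactly the details it leaves implicit.
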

\begin{proof} We may assume that $T\neq S$. The assertion follows from Lemma \ref{lem7agosto2014}.
\end{proof}
\begin{lemma}
\label{lem8agosto2014}
Assume that $S$ is not a normal subgroup of $\aut(\cX)$.
If $\Omega_1$ is preserved by all Sylow $p$-subgroups of $\aut(\cX)$ then $M_1$ is a normal subgroup of $\aut(\cX).$
\end{lemma}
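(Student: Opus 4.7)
The plan is to show that $gM_1g^{-1}=M_1$ for every $g\in G:=\aut(\cX)$ by first identifying $gM_1g^{-1}$ with either $M_1$ or $M_2$ according to the action of $g$ on the pair $\{\Omega_1,\Omega_2\}$, and then ruling out the ``swap'' possibility by combining a hypothetical element interchanging $\Omega_1$ with $\Omega_2$ with an element witnessing the non-normality of $S$.

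First I would verify that every $g\in G$ satisfies $g(\Omega_1)\in\{\Omega_1,\Omega_2\}$, and that $g(\Omega_1)=\Omega_2$ forces $g(\Omega_2)=\Omega_1$. The hypothesis applied to the Sylow $p$-subgroup $g^{-1}Sg$ implies that $\Omega_1$ is a union of $g^{-1}Sg$-orbits; since $|\Omega_1|=p^{n-1}$ while the long orbits of any Sylow $p$-subgroup have size $p^n$, the set $\Omega_1$ coincides with a single short orbit of $g^{-1}Sg$. Pushing forward by $g$ then expresses $g(\Omega_1)$ as a short $S$-orbit, so $g(\Omega_1)\in\{\Omega_1,\Omega_2\}$. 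The second assertion follows from the same argument applied to $g^{-1}$.

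Second, I would compute $gM_1g^{-1}$. By Proposition \ref{30ag2013}, $M_1$ is generated by the stabilizers $S_P$ for $P\in\Omega_1$, and by the argument given in the proof of Lemma \ref{lem7agosto2014} each such stabilizer coincides with the unique subgroup $G_P^{(1)}$ of order $p$ of the first ramification group at $P$ (the first ramification group has order $p$ as soon as $P$ lies in a short orbit of some Sylow $p$-subgroup). Hence $gS_Pg^{-1}=G_{g(P)}^{(1)}=S_{g(P)}$ whenever $g(P)\in\Omega_1\cup\Omega_2$, and the previous step gives
\[
gM_1g^{-1}\;=\;\langle S_{P'}\,:\,P'\in g(\Omega_1)\rangle\;=\;\begin{cases}M_1 & \text{if }g(\Omega_1)=\Omega_1,\\[2pt] M_2 & \text{if }g(\Omega_1)=\Omega_2.\end{cases}
\]

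The key step is to rule out the swap case. If some $g\in G$ interchanged $\Omega_1$ and $\Omega_2$, then $T:=gSg^{-1}$ would have $\{\Omega_1,\Omega_2\}$ as its pair of short orbits, hence nontrivial elements of $T$ would fix points in each of $\Omega_1$ and $\Omega_2$; Lemma \ref{lem7agosto2014} would then force $T=S$, so $g\in N_G(S)$. The hypothesis that $S$ is not normal in $G$ furnishes $h\in G\setminus N_G(S)$. By the same application of Lemma \ref{lem7agosto2014}, such an $h$ cannot swap $\Omega_1$ with $\Omega_2$ (swapping would force $hSh^{-1}=S$), so $h(\Omega_1)=\Omega_1$; applying Lemma \ref{lem7agosto2014} to $hSh^{-1}\neq S$ then yields $h(\Omega_2)\cap\Omega_2=\emptyset$, in particular $h(\Omega_2)\neq\Omega_2$. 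Now $gh(\Omega_1)=g(\Omega_1)=\Omega_2$, so by the dichotomy of the first step $gh$ is itself a swap and $gh(\Omega_2)=\Omega_1$; rewriting as $g(h(\Omega_2))=\Omega_1$ gives $h(\Omega_2)=g^{-1}(\Omega_1)=\Omega_2$, contradicting $h(\Omega_2)\neq\Omega_2$. Hence no swap element exists, $gM_1g^{-1}=M_1$ for every $g\in G$, and $M_1\trianglelefteq\aut(\cX)$. The main obstacle is that a single swapping element $g$, taken in isolation, does not clash with the standing hypotheses; the contradiction emerges only from coupling such a $g$ with an auxiliary element $h$ witnessing the non-normality of $S$ and exploiting Lemma \ref{lem7agosto2014} to pin down the behaviour of $hSh^{-1}$ on the pair $\{\Omega_1,\Omega_2\}$.
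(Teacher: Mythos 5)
Your argument is correct, but it follows a genuinely different route from the paper's. The paper's proof is more direct: for any Sylow $p$-subgroup $T$, the hypothesis makes $\Omega_1$ a short orbit of $T$, and the first-ramification-group argument from Lemma \ref{lem7agosto2014} gives $T_P=S_P$ for every $P\in\Omega_1$; since $M_1=\langle S_P : P\in\Omega_1\rangle$, every Sylow $p$-subgroup contains $M_1$, so $M_1$ lies in the intersection of all Sylow $p$-subgroups, which is a normal subgroup of $\aut(\cX)$ contained in $S$. Non-normality of $S$ makes that intersection proper in $S$, and $[S:M_1]=p$ forces it to equal $M_1$. You instead prove the stronger-looking statement that every $g\in\aut(\cX)$ fixes $\Omega_1$ setwise, via the dichotomy $g(\Omega_1)\in\{\Omega_1,\Omega_2\}$, the conjugation formula $gM_1g^{-1}\in\{M_1,M_2\}$, and the exclusion of swaps by pairing a hypothetical swapping element $g$ (which must normalize $S$) with a witness $h\notin N_G(\aut(\cX))(S)$ and deriving $h(\Omega_2)=\Omega_2$ against $h(\Omega_2)\cap\Omega_2=\emptyset$. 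Each step of yours checks out — in particular the identification $S_P=G_P^{(1)}$ and the fact that $M_i$ is generated by the point stabilizers over $\Omega_i$ are both justified exactly as in the paper — but the intersection-of-Sylows argument buys the same conclusion with far less case analysis, while your version makes the invariance of $\Omega_1$ itself explicit rather than deducing it a posteriori from the normality of $M_1$.
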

\begin{proof}
Let $T$ be any Sylow $p$-subgroup of $\aut(\cX)$ other than $S$. From the proof of Lemma \ref{lem7agosto2014}, $S_P=T_P$ for every point $P\in \Omega_1$. Since $M_1$ is generated by all stabilizers $S_P$ with $P$ ranging over $\Omega_1$, this shows that $M_1$ is a subgroup of $T$. Therefore, all the Sylow $p$-subgroups share $M_1$. Since $M_1$ has index $p$ in
$S$, $M_1$ is their complete intersection. From this the assertion follows.
\end{proof}

\begin{lemma}
\label{lem8agostoB2014}
Let $N$ be a normal subgroup of $\aut(\cX)$. Let $\Pi$ be the set of all points of $\cX$ which are fixed by some non-trivial element of $N$.
Assume that $S$ is not a normal subgroup of $\aut(\cX)$.
If
$0<|\Pi|< p^{n}$ then $\Pi=\Omega_1$ (or $\Pi=\Omega_2$) and $M_1$ (or $M_2$) is a normal subgroup of $\aut(\cX)$.
\end{lemma}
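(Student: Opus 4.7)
The plan is to analyze $\Pi$ via the $S$-orbit structure of $\cX$. Since $N$ is normal in $\aut(\cX)$, the set $\Pi$ is $\aut(\cX)$-invariant: if $gP=P$ with $g\in N\setminus\{1\}$ and $h\in\aut(\cX)$, then $(hgh^{-1})(hP)=hP$ with $hgh^{-1}\in N\setminus\{1\}$. In particular $\Pi$ is $S$-invariant, hence a union of $S$-orbits.

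By Proposition \ref{propap=3}, the $S$-orbits on $\cX$ have size either $|S|=p^n$ (long orbits) or $p^{n-1}$, and the only orbits of size $p^{n-1}$ are $\Omega_1$ and $\Omega_2$. The hypothesis $0<|\Pi|<p^n$ therefore forces $\Pi$ to be a non-empty union of short orbits, so $\Pi\in\{\Omega_1,\,\Omega_2,\,\Omega_1\cup\Omega_2\}$.

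The key step is to rule out $\Pi=\Omega_1\cup\Omega_2$. Suppose this holds. Since $\Pi$ is invariant under $\aut(\cX)$, every Sylow $p$-subgroup $T$ of $\aut(\cX)$ preserves $\Omega_1\cup\Omega_2$, and Lemma \ref{lem8agostoA2014} then forces $T$ to preserve each of $\Omega_1$ and $\Omega_2$ separately. Applying Lemma \ref{lem8agosto2014} to both orbits yields that $M_1$ and $M_2$ are both normal subgroups of $\aut(\cX)$. But $M_1$ and $M_2$ are distinct maximal subgroups of $S$, so $\langle M_1,M_2\rangle=S$, and consequently $S$ is generated by two normal subgroups of $\aut(\cX)$ and is itself normal in $\aut(\cX)$, contrary to our standing assumption.

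Therefore, up to interchanging indices, $\Pi=\Omega_1$. Its $\aut(\cX)$-invariance now says that every Sylow $p$-subgroup of $\aut(\cX)$ preserves $\Omega_1$, and a final application of Lemma \ref{lem8agosto2014} gives that $M_1$ is normal in $\aut(\cX)$, completing the proof. The only mildly delicate point is the cascading use of Lemmas \ref{lem8agostoA2014} and \ref{lem8agosto2014} to convert Sylow-invariance of $\Omega_1\cup\Omega_2$ into joint normality of $M_1$ and $M_2$; the rest is a direct orbit-counting argument once the $\aut(\cX)$-invariance of $\Pi$ is noted.
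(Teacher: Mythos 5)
Your proposal is correct and follows essentially the same route as the paper: use normality of $N$ to make $\Pi$ an $\aut(\cX)$-invariant union of $S$-orbits, observe that the size bound forces $\Pi\in\{\Omega_1,\Omega_2,\Omega_1\cup\Omega_2\}$, and then invoke Lemmas \ref{lem8agostoA2014} and \ref{lem8agosto2014} to get normality of $M_1$ (or $M_2$), ruling out $\Pi=\Omega_1\cup\Omega_2$ because it would force $S=\langle M_1,M_2\rangle$ to be normal. The only cosmetic difference is that you phrase the case distinction via the $S$-orbit decomposition where the paper splits on whether $|\Pi|\le p^{n-1}$; the substance is identical.
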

\begin{proof} Since $N$ is normal, $\Pi$ is partitioned in orbits of $\aut(\cX)$. In particular, the orbit of $P\in \Pi$ under the action of any Sylow $p$-subgroup of $\aut(\cX)$ is contained in $\Pi$. If $|\Pi|\le p^{n-1}$ then  $\Pi=\Omega_1$ (or $\Pi=\Omega_2$), and all Sylow $p$-subgroup of $\aut(\cX)$ preserve $\Omega_1$ (or $\Omega_2$). Therefore, the assertion follows from Lemma \ref{lem8agosto2014}. If $p^{n-1}<|\Pi|< p^{n}$, then  $\Pi=\Omega_1\cup \Omega_2$, and both $M_1$ and $M_2$ are normal subgroups of $\aut(\cX)$ by Lemmas \ref{lem8agostoA2014} and \ref{lem8agosto2014}.
But then $S=\langle M_1,M_2\rangle$ would be normal in $\aut(\cX)$, a contradiction.
 \end{proof}

\begin{lemma}
\label{lemagosto9E} Let $N$ be a normal subgroup of $\aut(\cX)$ such that the quotient curve $\bar{\cX}=\cX/N$ is elliptic.
Assume that $S$ is not a normal subgroup of $\aut(\cX)$.
If the order of $N$ is prime to $p$ then $M_1$ (or $M_2$) is a normal subgroup of $\aut(\cX)$.
\end{lemma}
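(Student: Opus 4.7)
The plan is to verify $0<|\Pi|<p^n$, where $\Pi$ denotes the set of points of $\cX$ fixed by some non-trivial element of $N$, and then invoke Lemma~\ref{lem8agostoB2014} to conclude that $M_1$ or $M_2$ is normal in $\aut(\cX)$.

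Nonemptiness of $\Pi$ follows from the Hurwitz genus formula applied to the tame Galois cover $\cX\to\bar{\cX}=\cX/N$: with $\gg(\bar{\cX})=1$, it reads
\begin{equation*}
\sum_{P\in\Pi}(|N_P|-1)=2\gg-2=2(p-2)p^{n-1}>0,
\end{equation*}
and it also yields the crude bound $|\Pi|\le 2(p-2)p^{n-1}$ via $|N_P|\ge 2$.

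The main obstacle is the upper bound $|\Pi|<p^n$, since this crude estimate is only sharp enough when $p=3$: for $p=3$ it gives $|\Pi|\le 2\cdot 3^{n-1}<3^n=p^n$, and Lemma~\ref{lem8agostoB2014} closes the case directly.

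For $p\ge 5$ the crude bound is too weak, so I would instead argue that the hypotheses themselves are incompatible, whence the statement holds vacuously. Since $\gcd(|S|,|N|)=1$, one has $S\cap N=1$ and $\bar S:=SN/N\cong S$ is a $p$-subgroup of order $p^n\ge p^2$ of $\aut(\bar{\cX})$. On an elliptic curve in characteristic $p\ge 5$, the stabilizer $\aut(\bar{\cX},O)$ of a base point $O$ has order dividing $6$, coprime to $p$, so the projection $\aut(\bar{\cX})\to\aut(\bar{\cX},O)$ is trivial on $\bar S$. Hence $\bar S$ lies in the translation subgroup $\bar{\cX}(\mathbb{K})$, whose $p$-primary torsion is either trivial (supersingular case) or isomorphic to $\mathbb{Q}_p/\mathbb{Z}_p$ (ordinary case). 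Every finite $p$-subgroup is therefore cyclic, forcing $S\cong\bar S$ to be cyclic. This contradicts Proposition~\ref{propbp=3} when $|S|=p^2$, as there $S$ is elementary abelian of rank $2$, and contradicts Proposition~\ref{31ago2013} when $|S|>p^2$, since the index-$p$ subgroups $M_i$ and hence $S$ would then be cyclic.
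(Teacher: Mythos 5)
Your proof is correct, and it reaches the conclusion by a somewhat different route from the paper's. Both arguments funnel through Lemma \ref{lem8agostoB2014} after bounding the fixed-point set $\Pi$ of $N$ via the (tame) Hurwitz genus formula, but they diverge on how the prime $p$ is controlled. The paper observes that, since $S\cap N=1$, the order-$p$ stabilizer $S_P$ of a point $P\in\Omega_1\cup\Omega_2$ (Lemma \ref{eq130apr2013}) induces an order-$p$ automorphism of the elliptic curve $\bar{\cX}$ fixing the point under $P$; as no elliptic curve in odd characteristic $p\geq 5$ admits such an automorphism, $p=3$. It then refines the Hurwitz computation to show $d_P+d_Q=2$ with no long orbit ramified, so that $\Pi$ is exactly $\Omega_1$, $\Omega_2$ or their union. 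You instead dispose of $p\geq 5$ by noting that $S\cong SN/N$ would embed into the translation group of $\bar{\cX}$ and hence be cyclic, contradicting Proposition \ref{propbp=3} for $|S|=p^2$ and Proposition \ref{31ago2013} for $|S|>p^2$; and for $p=3$ you content yourself with the crude bound $0<|\Pi|\leq 2\gg-2=2\cdot 3^{n-1}<3^n$, which is all that Lemma \ref{lem8agostoB2014} requires. Your version is slightly leaner, since the paper's finer determination of $\Pi$ is not actually needed once that lemma is available. One cosmetic slip: for $j=1728$ and $p\geq 5$ the stabilizer $\aut(\bar{\cX},O)$ has order $4$, which does not divide $6$; what matters, and what is true, is only that its order lies in $\{2,4,6\}$ and is therefore prime to $p$.
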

\begin{proof} Since $|N|$ is prime to $p$, $S$ can be regarded as a $\mathbb{K}$-automorphism group of $\bar{\cX}$. For $P\in \Omega_1\cup \Omega_2$, let $\bar{P}$ be the point of the quotient curve
$\bar{\cX}=\cX/N$ lying under $P$. Since $S_P$ has order $p$ by Lemma \ref{eq130apr2013}, the point $\bar{P}$  is fixed by a $\mathbb{K}$-automorphism of order $p$. As $p$ is odd and $\bar{\cX}$ is elliptic, we have $p=3$; see \cite[Theorem 11.84]{hirschfeld-korchmaros-torres2008}. From the Hurwitz genus formula applied to $N$,
$$\gg-1=3^{n-1}=3^{n-1}\textstyle\frac{1}{2}(d_P+d_Q)+\frac{\tau}{2}\, 3^n$$
with $P\in \Omega_1,\,Q\in\Omega_2$ and $\tau$ a non-negative integer. This is only possible when $\tau=0$ and $d_P+d_Q=2$. Therefore, either $\Omega_1$, or $\Omega_2$, or
$\Omega_1\cup \Omega_2$ coincide with the set of all points of $\cX$ which are fixed by some non-trivial element of $N$. Now, the assertion follows from Lemma \ref{lem8agostoB2014}.
\end{proof}
\begin{lemma}
\label{lemagosto10} For an odd prime $d$ other than $p$, let $U$ be a $d$-subgroup of $\aut(\cX)$ of order $d^u$ and exponent $d^e$. Then  $d^{u-e}$ divides $p-2$.
\end{lemma}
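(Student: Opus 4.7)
The plan is to apply the Hurwitz genus formula to the action of $U$ on $\cX$ and extract a divisibility relation modulo $d^{u-e}$. Since $\gcd(|U|,p)=1$, this action is tamely ramified, which tightly constrains both the stabilizers and the different exponents; combined with Nakajima extremality $\gg-1=(p-2)p^{n-1}$, this will transfer the divisibility onto $p-2$.

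First I would establish the structure of stabilizers. For any $P\in\cX$, the group $U_P$ acts faithfully on the one-dimensional cotangent space at $P$ (tame case), so $U_P$ is cyclic; its order therefore divides the exponent $d^e$ of $U$, and we may write $|U_P|=d^{f_P}$ with $0\leq f_P\leq e$. Moreover tameness gives different exponent equal to $|U_P|-1$ at each $P$. Letting $\bar\gg$ denote the genus of $\bar\cX=\cX/U$, $k$ the number of places of $\bar\cX$ at which the cover ramifies, and $f_i$ the common value of $f_P$ for $P$ above the $i$-th ramified place, the Hurwitz formula rewrites as
\[
2(p-2)p^{n-1}\;=\;d^u(2\bar\gg-2)+\sum_{i=1}^{k}\bigl(d^u-d^{u-f_i}\bigr)\;=\;d^u\bigl(2\bar\gg-2+k\bigr)\;-\;\sum_{i=1}^{k}d^{u-f_i}.
\]

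From here the conclusion is immediate. Since $f_i\leq e$ we have $u-f_i\geq u-e$ for every $i$, so each summand on the right-hand side is a multiple of $d^{u-e}$; consequently $d^{u-e}$ divides $2(p-2)p^{n-1}$. Because $d$ is an odd prime distinct from $p$, the factor $2p^{n-1}$ is coprime to $d$, and we conclude $d^{u-e}\mid p-2$. In the degenerate situation where $U$ acts without fixed points, the ramification sum is empty and the same identity yields the stronger divisibility $d^u\mid p-2$.

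I do not expect a genuine obstacle: the proof is essentially a single application of the tame Hurwitz formula combined with Nakajima extremality. The only delicate point is verifying that each stabilizer is cyclic of order dividing the exponent $d^e$, since this is exactly what forces every ramification contribution $d^{u-f_i}$ to be a multiple of $d^{u-e}$; once this is in place, the conclusion follows by pure divisibility.
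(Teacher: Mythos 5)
Your proof is correct and follows essentially the same route as the paper's: both apply the tame Hurwitz genus formula to $U$, use that tame stabilizers are cyclic and hence of order dividing the exponent $d^e$ so that every ramification contribution is divisible by $d^{u-e}$, and then strip off the factor $2p^{n-1}$, which is coprime to $d$. The paper phrases the ramification term via orbit lengths $\ell_i=d^{u-u_i}$ and factors $d^{u-e}$ out of the whole right-hand side, but this is only a cosmetic difference from your term-by-term divisibility argument.
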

\begin{proof} 
If $U$ has no short orbit, then $d^u$ divides $\gg-1$ by the Hurwitz genus formula applied to $U$, and the assertion follows. We may assume that $U$ has $m\ge 1$ short orbits and let $\ell_1,\ldots,\ell_m$ be their lengths. From  the Hurwitz genus formula applied to $U$,
\begin{equation}
\label{eqagosto10}
2\gg-2=2(p-2)p^{n-1}=d^u(2\bar{\gg}-2)+\sum_{i=1}^m(d^u-\ell_i)
\end{equation}
where $\bar{\gg}$ is the genus of the quotient curve $\bar{\cX}=\cX/U$.
Let $P$ be a point from a short orbit of length $\ell_i$. Then $d^u=|U_P|\ell_i$. Since $U_P$ is a cyclic subgroup of $U$, we also have that $|U_P|=p^{u_i}\le p^e$. Therefore, $\ell_i=d^{u-u_i}$ with $u_i\leq e$. From (\ref{eqagosto10}), $$2(p-2)p^{n-1}=d^{u-e}(d^e(2\bar{\gg}-2)+\sum_{i=1}^m(d^{e}-d^{e-u_i}))$$
whence the assertion follows.
\end{proof}
\begin{lemma}
\label{le12agostoA} For $|S|=p^2$, one of the following cases occurs.
\begin{itemize}
\item[\rm(i)] $\cX$ is an Artin-Mumford curve with affine equation {\rm(\ref{artinmumford})}, and $\aut(\cX)$ is the semidirect product of $S$ by a dihedral group of order $2(p-1)$.
\item[\rm(ii)] $M_1$ (and $M_2)$ is a normal subgroup of $\aut(\cX)$, and $\aut(\cX)$ is the semidirect product of $S$ by a  subgroup of a cyclic group of order $p-1$.
\end{itemize}
\end{lemma}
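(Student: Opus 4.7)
The plan is to analyze $G := \aut(\cX)$ by splitting on whether $S$ is a normal subgroup. The structural setup, drawn from Propositions \ref{propbp=3} and \ref{propap=3}, is: $S$ is elementary abelian of order $p^2$, $\cX$ is ordinary of genus $(p-1)^2$, and $S$ has two short orbits $\Omega_1, \Omega_2$ of length $p$ with pointwise stabilizers $M_1, M_2$ of order $p$. The Hurwitz formula applied to $M_i$ and to $S$ shows that $\cX/M_i$ and $\cX/S$ are both rational, and the branch locus of the cover $\cX \to \cX/S$ consists of the two images $\bar P_1, \bar P_2$ of $\Omega_1, \Omega_2$.

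I would first argue that $S$ must be normal in $G$. Suppose not, and let $N \subseteq S$ be a minimal non-trivial normal $p$-subgroup of $G$, which then has order $p$. Lemma \ref{lem8agostoB2014} applied to $N$ forces either $N \in \{M_1, M_2\}$, or $N$ is a semiregular $C_p$-subgroup of $S$. In the latter case, the Hurwitz and Deuring--Shafarevich formulas applied to $N$ yield $\gg(\cX/N) = \gamma(\cX/N) = p - 1$, and Proposition \ref{igusa1} applied to $G/N \le \aut(\cX/N)$ containing the order-$p$ subgroup $S/N$ forces $S/N$ (and hence $S$) to be normal in $G$, contradicting the assumption outside of the $p = 3$ Igusa-curve exception, which is ruled out by direct inspection. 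If instead $N \in \{M_1, M_2\}$, the symmetric argument on the opposite short orbit produces the other $M_j$ as normal; since $S = M_1 M_2$, it follows that $S$ itself is normal in $G$.

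Once $S$ is normal, Schur--Zassenhaus decomposes $G = S \rtimes H$ with $|H|$ coprime to $p$. The group $H$ acts faithfully on the rational curve $\cX/S$ preserving $\{\bar P_1, \bar P_2\}$, so $H$ embeds in the setwise stabilizer of two points in $\PGL(2,\K)$, namely $\K^* \rtimes C_2$. The central step identifies $\cX$ explicitly: the $C_p$-Galois cover $\cX/M_i \to \cX/S$ is totally ramified at a single point with ordinary ramification, so after placing $\bar P_1$ at $(t=0)$ and $\bar P_2$ at $(t=\infty)$ and absorbing the $\mathbb{F}_p^*$-rescaling ambiguities, it takes the Artin--Schreier form $\bar u^p - \bar u = t$ (respectively $\bar v^p - \bar v = c/t$). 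Their compositum gives $\K(\cX) = \K(\bar u, \bar v)$ satisfying $(\bar u^p - \bar u)(\bar v^p - \bar v) = c$, so $\cX \cong \cM_c$ is an Artin--Mumford curve. If $H$ contains the involution swapping $\bar P_1, \bar P_2$, invoking $\aut(\cM_c) = S \rtimes D_{p-1}$ delivers case (i); otherwise $H$ fixes each $\bar P_i$ individually, both $M_i$ are normal in $G$, and a local Artin--Schreier compatibility at a point of $\Omega_1$ (matching the scaling $t \mapsto \lambda t$ with the relation $y^p - y = 1/t$) forces $\lambda \in \mathbb{F}_p^*$, giving $H \le C_{p-1}$ and case (ii).

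The main obstacle is the Artin--Schreier identification of $\cX$ with $\cM_c$: the $\mathbb{F}_p^*$-scaling ambiguities in the two $C_p$-towers must be tracked so that the composite equation takes the standard Artin--Mumford form. A secondary but important point is the exclusion of the non-normal-$S$ case via Proposition \ref{igusa1}, requiring a careful treatment of the Igusa exception at $p = 3$.
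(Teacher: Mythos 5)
Your proof divides into two halves of very different quality, and the weak one is the first. The step ``suppose $S$ is not normal, and let $N\subseteq S$ be a minimal non-trivial normal $p$-subgroup of $G$'' is a genuine gap: nothing guarantees that $\aut(\cX)$ has a non-trivial normal $p$-subgroup at all, let alone one inside $S$. If the $p$-core of $\aut(\cX)$ is trivial, there is no $N$ to feed into Lemma \ref{lem8agostoB2014}, and your contradiction never gets started; this is exactly the hard configuration one must exclude. The paper closes this hole with an external input you do not have: since $\cX/M_1$ is rational (Proposition \ref{pro27luglio2014}), $\K(\cX)|\K(\cX/M_1)$ is a degree-$p$ Artin--Schreier extension of a rational function field, and the Madan--Valentini Hauptsatz says that the Galois group $M_1$ of such an extension is normal in the full automorphism group except for four explicitly listed curves; one exception is the Artin--Mumford curve (case (i)) and the other three are excluded by $p$-rank and genus. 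With $M_1$ and $M_2$ both normal, $S=\langle M_1,M_2\rangle$ is normal, and the complement is read off from the subgroups of $PGL(2,\K)$ acting on $\cX/M_1$. A secondary unproved step in your argument is the ``symmetric argument'' deducing $M_2\trianglelefteq G$ from $M_1\trianglelefteq G$: normality of $M_1$ makes $\Omega_1$ a $G$-invariant set, but it does not by itself hand you a second normal subgroup supported on $\Omega_2$.

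Your second half, by contrast, is correct and is a genuinely different route worth keeping. The two quotient covers $\cX/M_i\to\cX/S$ are degree-$p$ Artin--Schreier covers of rational curves, each totally ramified over exactly one of $\bar P_1,\bar P_2$ with different $2(p-1)$ (hence a single simple pole in reduced Artin--Schreier form), and their compositum is $\K(\cX)$ because $M_1\cap M_2=1$; normalizing the two equations to $\bar u^p-\bar u=t$ and $\bar v^p-\bar v=c/t$ identifies $\cX$ with the Artin--Mumford curve $\cM_c$. Crucially, this identification uses only the ramification data of $S$ from Propositions \ref{propap=3}, \ref{propbp=3} and \ref{pro27luglio2014} and nothing about normality of $S$ in $\aut(\cX)$, so you could discard your first step entirely and finish by quoting $\aut(\cM_c)\cong(C_p\times C_p)\rtimes D_{p-1}$, landing in case (i). Reordered this way your argument is complete, shorter than the paper's, and in fact stronger: it shows case (ii) of the lemma is vacuous, a fact the paper establishes only for $p=3$ (Proposition \ref{pro28luglio2014}) and by a much longer canonical-embedding computation. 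As submitted, however, the normality argument you lead with does not stand.
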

\begin{proof}
Let $\bar{\cX}=\cX/M_1$. By Proposition \ref{pro27luglio2014}, $\mathbb{K}(\cX)|\mathbb{K}(\bar{\cX})$ is an Artin-Schreier extension. Therefore,
 since $|M_1|=p$, $M_1$ is a normal subgroup $\aut(\cX)$ with four exceptions by a result of Madan and Valentini \cite{maddenevalentini1982}; see also \cite[Theorem 11.93]{hirschfeld-korchmaros-torres2008}. One exception is given in case (i). Two of the other three exceptions have zero $p$-rank, while the forth has genus $2$, and hence they cannot actually occur in our case.

The above argument holds true for $M_2$, and hence we may assume that both $M_1$ and $M_2$ are normal subgroups of $\aut(\cX)$. Since $S$ is generated by $M_1$ and $M_2$, it turns out that $S$ is also a normal subgroup of $\aut(\cX)$. By Proposition \ref{pro27luglio2014}, the quotient curve $\bar{\cX}=\cX/M_1$ is rational. Therefore
$\aut(\cX)/M_1$ is isomorphic to a subgroup $\Lambda$  of $PGL(2,\mathbb{K})$.  Furthermore,
$S/M_1$ is isomorphic to a normal subgroup of $\Lambda$ of order $p$. Also, $p^2\nmid |\Lambda|$,
since $S$ is a Sylow $p$-subgroup of $\aut(\cX)$.
From  the classification of subgroups of $PGL(2,\mathbb{K})$, see \cite[Chapter II. Hauptsatz 8.27]{huppertI1967} and \cite{maddenevalentini1982}, $|\Lambda|=pm$ with $m|(p-1)$ and hence
 $\Lambda$ is a semidirect product of $S/M_1$ by a cyclic group $L$ of order
 $m$. Therefore, $\aut(\cX)/S$ is
isomorphic to $L$ and the assertion is proven.
\end{proof}
\begin{rem} \em{The property of $\aut(\cX)$ given in (i) of Lemma \ref{le12agostoA} characterizes the Artin-Mumford curve; see \cite{AK}.}
\end{rem}

\begin{lemma}
\label{lem6agos} Any $2$-subgroup of $\aut(\cX)$  has a cyclic subgroup of index $2$.
\end{lemma}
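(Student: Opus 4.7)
The plan is to pick a single involution $v\in U$ (which exists whenever $|U|\geq 2$, since the centre of a non-trivial $2$-group contains an element of order $2$; the case $|U|=1$ is trivial) and study the action of $U/\langle v\rangle$ on the fixed point set of $v$ on $\cX$. The key input is a simple parity computation for the number of fixed points of $v$.

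First, I would apply the Hurwitz genus formula to $\langle v\rangle$. Since $p$ is odd, $v$ acts with tame ramification, and setting $f(v):=|\{P\in\cX\mid v(P)=P\}|$ one obtains
\begin{equation*}
f(v)=2\bigl(\gg(\cX)-2\gg(\cX/\langle v\rangle)+1\bigr).
\end{equation*}
Because $\cX$ is Nakajima extremal, $\gg(\cX)=(p-2)p^{n-1}+1$ with $p$ odd; hence $(p-2)p^{n-1}$ is odd and $\gg(\cX)$ is even, which forces $f(v)\equiv 2\pmod{4}$, independently of $\gg(\cX/\langle v\rangle)$.

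Next, I would examine how $U/\langle v\rangle$ permutes this fixed point set. For $P\in\cX$ with $v(P)=P$, the stabilizer $U_P$ contains $v$ and has order prime to $p$, so it embeds into the tame quotient of $\aut(\cX)_P$, which is cyclic; therefore $U_P$ itself is cyclic. The $U/\langle v\rangle$-orbit through $P$ has length $|U|/|U_P|$, a power of $2$. Summing orbit lengths yields $f(v)\equiv 2\pmod{4}$, so if every orbit had length at least $4$ the sum would be divisible by $4$, a contradiction. Hence some orbit has length $1$ or $2$: in the first case $U_P=U$ and $U$ is cyclic; in the second, $U_P$ is a cyclic subgroup of $U$ of index $2$. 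Either way, $U$ has a cyclic subgroup of index at most $2$.

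The crucial step is the parity congruence $f(v)\equiv 2\pmod{4}$, which is exactly where the Nakajima extremal hypothesis enters; without the explicit form $\gg(\cX)=(p-2)p^{n-1}+1$ one loses control of $f(v)\bmod 4$ and the argument collapses. Beyond this, the proof is a routine coprime-order orbit count together with the standard fact that the tame inertia at a point of a curve in characteristic $p$ is cyclic.
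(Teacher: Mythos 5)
Your proof is correct and rests on exactly the same idea as the paper's: the Nakajima genus $\gg=(p-2)p^{n-1}+1$ forces $2\gg-2\equiv 2\pmod 4$, so a mod-$4$ count in the (tame) Hurwitz formula produces an orbit of length $1$ or $2$, whose point stabilizer is cyclic of index at most $2$ in $U$. The only cosmetic difference is that you route the parity argument through a central involution and an orbit count on its fixed-point set, whereas the paper applies the Hurwitz formula to $U$ itself and reads the congruence off the short-orbit terms $\sum_i(2^u-\ell_i)$.
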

\begin{proof} Let $U$ be a subgroup of $\aut(\cX)$ of order $d=2^u\geq 2$. From  the Hurwitz genus formula applied to $U$,
$$2\gg-2=2(p-2)p^{n-1}=2^u(2\bar{\gg}-2)+\sum_{i=1}^m(2^u-\ell_i)$$
where $\bar{\gg}$ is the genus of the quotient curve $\bar{\cX}=\cX/U$ and $\ell_1,\ldots,\ell_m$ are the short orbits of $U$ on $\cX$. Since $2(p-2)p^{n-1}\equiv 2 \pmod 4$ while
$2^u(2\bar{\gg}-2)\equiv 0 \pmod 4$, some $\ell_i$ ($1\le i \le m$) must be either $1$ or $2$.  Therefore, $U$ or a subgroup of $U$ of index $2$ fixes a point of $\cX$ and hence
is cyclic.
\end{proof}
\begin{rem}
\label{2sub} {\em{From Lemma \ref{lem6agos} and \cite[Chapter I, Satz 14.9]{huppertI1967}, any $2$-subgroup of $\aut(\cX)$ is
either cyclic, or abelian with a cyclic subgroup of index $2$, or generalized quaternion, or dihedral, or semidihedral, or type (3) with Huppert's notation \cite{huppertI1967}. This together with deep results from Group theory, see \cite{abg,gw1,gw2,walter1969} yields that if $G$ is a non-abelian simple subgroup of $\aut(\cX)$, then
a Sylow $2$-subgroup of $G$ is either dihedral, or semidihedral. In the former case, $G\cong PSL(2,q)$, with $q\geq 5$ or $G\cong {\rm{Alt}_7}$ (the Gorenstein-Walter theorem); in the latter case,
$G\cong PSL(3,q)$ with $q\equiv 3 \pmod 4$, or $G\cong PSU(3,q)$ with $q \equiv 1 \pmod 4$, or $G=M_{11}$, where $q$ is an odd prime power (the Alperin-Brauer-Gorenstein theorem).}}
\end{rem}
We are going to investigate the possibilities of the existence of a simple normal subgroup $N$ in $\aut(\cX)$, as  described in Remark \ref{2sub}. For our purpose, it will be sufficient to consider the cases
when the quotient curve $\cX/N$ is rational. Under this hypothesis, $p$ divides $|N|$. In fact, otherwise $S$ is an abelian $p$-subgroup of $PGL(2,\mathbb{K})$, and hence $n=2$ by Proposition \ref{propbp=3}, while $\aut(\cX)$ is solvable for $n=2$ by Lemma \ref{le12agostoA}.
\begin{lemma}
\label{lemagosto10A} Let $N$ be a normal subgroup of $\aut(\cX)$ such that the quotient curve $\bar{\cX}=\cX/N$ is rational. Then $N$ is not isomorphic to $PSU(3,q)$ with $q\equiv 1 \pmod 4$.
\end{lemma}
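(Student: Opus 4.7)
The plan is to suppose toward contradiction that $N \cong \PSU(3,q)$ with $q \equiv 1 \pmod 4$, and to derive a contradiction by analysing how $p$ divides the various cyclotomic factors of $|N| = q^3(q-1)(q+1)^2(q^2-q+1)/\gcd(3,q+1)$. By Lemma \ref{lemagosto9D}, $p^{n-1}$ divides $|N|$, hence $p$ divides $|N|$. Let $P := S \cap N$, a Sylow $p$-subgroup of $N$; since $N \trianglelefteq \aut(\cX)$ we have $P \trianglelefteq S$, and because $|S:P| \le p$, either $P = S$ or $P$ is one of the maximal normal subgroups $M_i$ of $S$.

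First I would handle the non-defining-characteristic cases. If $p \mid q-1$ or $p \mid q^2 - q + 1$, the Sylow $p$-subgroup of $\PSU(3,q)$ lies inside a cyclic maximal torus of $SU(3,q)$, so $P$ is cyclic; this contradicts Proposition \ref{propbp=3} when $P = S$, and Proposition \ref{31ago2013} when $P = M_i$. If instead $p \mid q+1$ with $p$ odd, the Sylow $p$-subgroup of $\PSU(3,q)$ is homocyclic of rank two, namely $C_{p^b}\times C_{p^b}$ with $p^b = (q+1)_p$. For any odd prime $d \ne p,3$ dividing $q+1$, the group $\PSU(3,q)$ contains a $d$-subgroup $C_{d^c}\times C_{d^c}$ with $d^c = (q+1)_d$, so Lemma \ref{lemagosto10} forces $d^c \mid p-2$. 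Applying Zsigmondy's theorem to produce a primitive prime divisor $d$ of $q^2-1$ rules out all but finitely many $q$; the surviving small $q$ are eliminated by combining Propositions \ref{propbp=3}, \ref{proA18feb2014}, \ref{a4set2013} with Lemma \ref{le12agostoA}, since the resulting restrictions on $\aut(\cX)$ force solvability, incompatible with the simple subgroup $N$ (the peculiarities for $p = 3$ are dispatched by direct inspection).

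The crux is the defining-characteristic case $p \mid q$, so $q = p^a$. Here $|P| = p^{3a}$ and $P$ has nilpotency class two with centre $Z_0$ of order $p^a$; being characteristic in $P \trianglelefteq S$, the subgroup $Z_0$ is normal in $S$, so Proposition \ref{propcp=3} applies. The alternative $|Z_0| = |S|/p$ gives $a = n-1$, while $|P| \le |S|$ gives $3a \le a+1$, hence $a \le 1/2$, a contradiction. Otherwise $Z_0$ is semiregular on $\cX$; then the Hurwitz and Deuring--Shafarevich formulas give $\gg(\cX/Z_0) = \gamma(\cX/Z_0) = (p-2)p^{n-1-a}+1$, and Proposition \ref{a30ag2013} identifies $\cX/Z_0$ as a Nakajima extremal curve with Sylow $p$-subgroup $S/Z_0$ of order $p^{n-a}$. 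The normaliser $B = N_N(Z_0)$, a Borel subgroup of $\PSU(3,p^a)$ of order $p^{3a}(p^{2a}-1)/\gcd(3,p^a+1)$, descends to a faithful action of $B/Z_0$ on $\cX/Z_0$; moreover $P/Z_0 \cong (\mathbb{F}_{p^{2a}},+)$ is elementary abelian of rank $2a$. Comparing this with Proposition \ref{propbp=3} applied to $\cX/Z_0$ reduces the analysis to $a = 1$, whence $n = 3$, $|S| = p^3$, $q = p$, and $S \cong UT(3,p)$ by Proposition \ref{a31ago2013}.

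In this last remaining configuration, $\cX/Z_0$ has genus $(p-1)^2$ and its Sylow $p$-subgroup has order $p^2$, so by Lemma \ref{le12agostoA} its automorphism group is either $C_p^2 \rtimes D_{p-1}$ (the Artin--Mumford case) or $C_p^2 \rtimes L$ with $L$ cyclic of order dividing $p-1$. The embedding of $B/Z_0$ into $\aut(\cX/Z_0)$ contributes a cyclic subgroup of order $(p^2-1)/\gcd(3,p+1)$ coming from the Levi torus. A direct inspection of element orders in $C_p^2 \rtimes D_{p-1}$ shows its maximum cyclic subgroup has order at most $2(p-1)$, so $(p^2-1)/\gcd(3,p+1) \le 2(p-1)$, forcing $p \le 5$; the lone remaining case $p = 5$ is closed by verifying that $C_5^2 \rtimes D_4$ has no element of order $8$, whereas $(p^2-1)/3 = 8$. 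The main obstacle is precisely this defining-characteristic analysis: because $Z_0$ is not normal in $N$, one cannot iterate the reduction cleanly, and the argument must pass through the Borel subgroup and finish with an explicit element-order computation in the automorphism group of the Artin--Mumford curve.
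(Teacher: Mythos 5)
Your overall skeleton is the right one (split on whether $p$ divides $q$, exploit Lemma \ref{lemagosto10}, and finish by playing the normalizer of the centre of the Sylow $p$-subgroup of $N$ against Lemma \ref{le12agostoA} on the quotient curve — this last step is essentially identical to the paper's, which for $q=5$ produces exactly your cyclic group of order $(p^2-1)/3=8$ inside $C_p^2\rtimes D_{p-1}$). But both of your reduction steps have genuine gaps. In the cross-characteristic case with $p\mid q+1$, your Zsigmondy argument does not close: the primitive prime divisor of $q^2-1$ that Zsigmondy provides is typically $p$ itself (that is what $p\mid q+1$, $p\nmid q-1$ means), and when $q+1=2^{\alpha}3^{\beta}p^{\gamma}$ there is no auxiliary odd prime $d\neq p,3$ at all; even when such a $d$ exists, the conclusion $d^c\mid p-2$ bounds only the $d$-part of $q+1$ and does not bound $q$. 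So "all but finitely many $q$" is not established, and "dispatched by direct inspection" is not a proof. The paper closes this entire case in one stroke by applying Lemma \ref{lemagosto10} not to a torus but to the Sylow subgroup of $PSU(3,q)$ of order $q^3$ in the non-defining characteristic: its exponent is at most $q$, so $q^2\mid p-2$, while $p$ divides one of $q^2-q+1$, $q\pm 1$ and hence $p<q^2$ — a contradiction needing no subdivision by cyclotomic factor.

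In the defining-characteristic case your reduction to $a=1$ is also gapped. You invoke Proposition \ref{propbp=3} for $\cX/Z_0$, but that proposition applies only when the full group $S/Z_0$ is abelian, i.e.\ only when $S=P$. When $[S:P]=p$ (which you allowed at the outset, and which can genuinely occur, e.g.\ through field automorphisms of $PSU(3,p^a)$ of order $p$ when $p\mid a$, or through elements centralizing $N$), the group $S/Z_0$ properly contains the elementary abelian $P/Z_0$ and need not be abelian, so no contradiction with Proposition \ref{propbp=3} arises and your argument never reaches $a=1$; note also that your endgame requires $|S/Z_0|=p^2$ for Lemma \ref{le12agostoA} to be applicable, so this reduction cannot be skipped. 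The paper avoids the problem by first bounding $q$ arithmetically: $PSU(3,q)$ with $q\equiv 1\pmod 4$ contains $C\times C_1$ with $|C|=|C_1|=\frac{1}{2}(q+1)/\mu$ odd and prime to $p$, and Lemma \ref{lemagosto10} applied prime by prime forces $\frac{1}{2}(q+1)/\mu$ to divide $p-2$, whence $q\in\{5,17\}$; only then is $S=M$ proved, using that $|\Aut(PSU(3,q))|=6|PSU(3,q)|$ for these two values so that elements of $S\setminus M$ would have to centralize the non-abelian group $M$. You would need either this torus argument or a separate treatment of the case $S\neq P$ to make your route sound.
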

\begin{proof}  Let $\mu=3$ or $\mu=1$ according as $3$ divides $q+1$ or does not, and factorize the order of $PSU(3,q)$ as $q^3(q^2-q+1)(q-1)(q+1)^2/\mu$.

Assume first that $p$ is prime to $q$. Since a Sylow subgroup $M$ of $PSU(3,q)$ of order $q^3$ has exponent at most $q$, Lemma \ref{lemagosto10} applied to $M$ yields $q^2\mid(p-2)$. On the other hand, as $p$ divides one of the integers $q^2-q+1,q-1,q+1$, we have $p<q^2$. This contradiction proves the claim for $(p,q)=1$.

Assume that $q=p^m$ for some $m\geq 1$. Take a subgroup in $PSU(3,q)$ that is the direct product of two cyclic groups $C$ and $C_1$ both of odd order $\textstyle\frac{1}{2}(q+1)/\mu$. Write $|C|=p_1^{u_1} \cdots p_t^{u_t}$ with $p_1,\ldots, p_t$ pairwise distinct prime numbers. Obviously, the subgroup $G_i$ of $G$ of order $p_i^{2u_i}$ has exponent $p^{u_i}$. Since $p\nmid(q+1)/\mu$, Lemma \ref{lemagosto10} applied to $G_i$ yields that $p_i^{u_i}$ divides $p-2$. Therefore, $|C|$ itself divides $p-2$ showing that $(\textstyle\frac{1}{2}(q+1)/\mu)\mid(p-2)$. From this, $\lambda(p^m+1)=2\mu(p-2)$ for a positive integer $\lambda$, whence $p^m\in \{5,17\}$ follows.
We may assume that $S$ contains $M$.

We show that $S=M$. For $q\in \{5,17\}$,  $|\aut(PSU(3,q))|=6|PSU(3,q)|$ holds, and hence no element in $S\setminus M$ is in $\aut(PSU(3,q))$. Therefore, if we suppose $S$ to be larger than $M$, the elements of $S$  not in $M$ commute with $M$. According to (v) of Lemma \ref{30ag2013}, take a pair $\{s_1,s_2\}$ of generators of $S$, both of order $p$. Obviously, one of them, say $s_1$, is not in $M$. Then $s_2$ is not $M$ as well, otherwise $|S|=p^2<p^3=|M|$. Therefore, every element in $M$ is falls in $Z(S)$ as both $s_1$ and $s_2$ commute with $M$. But then $M$ is contained in $Z(S)$ which is impossible since $M$ is not abelian.

It remains to rule out the possibility that either $|S|=|M|=5^3$ or $|S|=|M|=17^3$. Assume first that $|S|=5^3$. From Propositions \ref{propgp=3} and \ref{a30ag2013}, the quotient curve $\bar{\cX}=\cX/Z(S)$ is a Nakajima extremal curve of genus $\bar{\gg}=(p-2)p=15$. By Lemma \ref{le12agostoA},
a Sylow $2$-subgroup of $\aut(\cX)$ is a subgroup of a dihedral group of order $2(p-1)=8$. On the other hand, the normalizer $T$ of $Z(S)$ in $PSU(3,5)$ has order $1000=8\cdot 125$ and its factor group $\bar{T}=T/Z(S)$  has a cyclic group of order $8$. Since $\bar{T}$ is a subgroup of $\aut(\bar{\cX})$, this is impossible. The proof for $|S|=17^3$ is analogous. In fact, the normalizer $T$ of $Z(S)$ in $PSU(3,17)$ has order $32\cdot 3\cdot 17^3$ and the factor group
$\bar{T}=T/Z(S)$  has a cyclic group of order $32$.
 \end{proof}
\begin{lemma}
\label{lemagosto12B} Let $N$ be a normal subgroup of $\aut(\cX)$ such that the quotient curve $\bar{\cX}=\cX/N$ is rational. Then $N$ is not isomorphic to $PSL(3,q)$ with $q\equiv 3 \pmod 4$.
\end{lemma}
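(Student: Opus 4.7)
The plan is to adapt the proof of Lemma \ref{lemagosto10A} to $PSL(3,q)$, using the order
$$|PSL(3,q)|=\textstyle\frac{1}{\mu}q^3(q-1)^2(q+1)(q^2+q+1),\qquad \mu=\gcd(3,q-1),$$
and replacing the role of $(q+1)/2$ (which is odd when $q\equiv 1\pmod 4$) by that of $(q-1)/2$ (which is odd when $q\equiv 3\pmod 4$).

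First, in the case $\gcd(p,q)=1$, a Sylow $p$-subgroup $M$ of $PSL(3,q)$ has order $q^3$ and exponent at most $q$; Lemma \ref{lemagosto10} then forces $q^2\mid(p-2)$, hence $p\ge q^2+2$. On the other hand $p$ divides one of $q-1$, $q+1$, $q^2+q+1$, so $p\le q^2+q+1$. The only compatible possibility is $p=q^2+2$ dividing $q^2+q+1$, which forces $(q^2+2)\mid(q-1)$, impossible for $q\ge 2$.

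Next, for $q=p^m$ the hypothesis $q\equiv 3\pmod 4$ makes $(q-1)/2$ odd. Inside the split maximal torus of $PSL(3,q)$, of order $(q-1)^2/\mu$, I pick a subgroup $C\times C_1$ with $C$ and $C_1$ cyclic of odd order $(q-1)/(2\mu)$, both coprime to $p$. Applying Lemma \ref{lemagosto10} Sylow-prime-wise to $C\times C_1$ yields $(q-1)/(2\mu)\mid(p-2)$, hence $\lambda(p^m-1)=2\mu(p-2)$ for some positive integer $\lambda$, with $\mu\le 3$. The inequality $p^m-1\le 6(p-2)$ rules out $m\ge 2$; the case $m=1$ reduces to $(p-1)\mid 2\mu$, whose only solutions with $p$ an odd prime, $p\equiv 3\pmod 4$ and $\mu=\gcd(3,p-1)$ are $(p,\mu)=(3,1)$ and $(p,\mu)=(7,3)$.

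For each remaining $q=p\in\{3,7\}$ I follow the concluding step of Lemma \ref{lemagosto10A}. Since $|\mathrm{Out}(PSL(3,p))|=2\mu$ is coprime to $p$, the very argument used there (elements of $S\setminus M$ centralize $N$, so together with a pair of generators of $S$ of order $p$ from Proposition \ref{30ag2013}(v) they generate only a subgroup of $Z(S)M$) forces $S=M$, so $|S|=p^3$ and, by Proposition \ref{a31ago2013}, $S\cong UT(3,p)$. Passing to $\bar\cX=\cX/Z(S)$, which by Propositions \ref{propgp=3} and \ref{a30ag2013} is a Nakajima extremal curve of genus $(p-2)p+1$ with Sylow $p$-subgroup of order $p^2$, Lemma \ref{le12agostoA} gives $|\aut(\bar\cX)|\le 2p^2(p-1)$. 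Since the kernel of $\aut(\cX)\to\aut(\bar\cX)$ coincides with the deck group $Z(S)$ of order $p$, one concludes $|\aut(\cX)|\le 2p^3(p-1)$, which equals $108$ for $p=3$ and $4116$ for $p=7$. In both cases $|PSL(3,p)|$ is much larger ($5616$ and $1\,876\,896$ respectively), contradicting $PSL(3,p)\le\aut(\cX)$.

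The main technical obstacle is the explicit choice of the subgroup $C\times C_1$ when $\mu=3$, where modding out $SL(3,q)$ by its center twists the split torus and requires some care to extract a direct product of two cyclic groups of equal odd order, and the cleanup of the step $S=M$ when $|S|$ is only a priori bounded below by $p^3$. Both are resolved by the Frattini-type arguments already used in the proof of Lemma \ref{lemagosto10A}.
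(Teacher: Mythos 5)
Your argument coincides with the paper's own proof up to and including the reduction to $q=p\in\{3,7\}$ with $S=M\cong UT(3,p)$: the coprime case via Lemma \ref{lemagosto10} applied to a Sylow subgroup of order $q^3$, the split-torus computation giving $\tfrac{1}{2}(q-1)/\mu\mid(p-2)$, and the step $S=M$ via the outer automorphism group are exactly the paper's steps. The gap is in your final elimination of $p=3$ and $p=7$. You invoke ``the kernel of $\aut(\cX)\to\aut(\bar{\cX})$'' for $\bar{\cX}=\cX/Z(S)$, but such a homomorphism is defined only on the normalizer of $Z(S)$ in $\aut(\cX)$: here $Z(S)$ is a subgroup of order $p$ of the simple group $N\cong PSL(3,p)$, so it is certainly not normal in $N$, let alone in $\aut(\cX)$. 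Consequently the bound $|\aut(\cX)|\le 2p^3(p-1)$ is unjustified, and the comparison with $|PSL(3,p)|$ proves nothing.

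Moreover, the argument cannot be repaired by restricting to the normalizer $L$ of $Z(S)$ in $N$, where the induced map into $\aut(\bar{\cX})$ does exist: $L$ is the Borel subgroup of $PSL(3,p)$, of order $p^3(p-1)^2/\mu$, i.e. $108=2\cdot 3^3\cdot 2$ for $p=3$ and $4116=2\cdot 7^3\cdot 6$ for $p=7$, which in both cases equals \emph{exactly} the upper bound $2p^3(p-1)$ obtained by lifting Lemma \ref{le12agostoA} through $Z(S)$. So no contradiction can come from counting orders alone; this is precisely where the paper has to work harder. For $p=7$ it uses the structure of $L/Z(S)$: its complement to $S/Z(S)$ is an \emph{abelian} group of order $12$, which cannot occur inside the dihedral group of order $2(p-1)=12$ permitted by Lemma \ref{le12agostoA}. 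For $p=3$ even that fails (the dihedral group of order $4$ is abelian), and the paper instead takes a subgroup of order $13$ of $PSL(3,3)$ and derives an arithmetic contradiction from the Hurwitz genus formula. You need one of these finer arguments, or a substitute, to close the two residual cases.
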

\begin{proof} We argue as in the proof of Lemma \ref{lemagosto10A}. Let $\mu=3$ or $\mu=1$ according as $3$ divides $q-1$ or does not, and factorize the order of $PSL(3,q)$ as $q^3(q^2+q+1)(q+1)(q-1)^2/\mu$.

Assume first that $p$ is prime to $q$. Since a Sylow subgroup $M$ of $PSL(3,q)$ of order $q^3$ has exponent at most $q$, Lemma \ref{lemagosto10} applied to $M$ yields $q^2\mid(p-2)$. On the other hand, as $p$ divides one of the integers $q^2+q+1,q-1,q+1$, we have either $p<q^2$, or $p=q^2+q+1$. Both cases are inconsistent with $q^2\mid (p-2)$. This contradiction proves the claim for $(p,q)=1$.

Assume that $q=p^m$ for some $m\geq 1$. Then $p\equiv 3 \pmod 4$. Take a subgroup in $PSL(3,q)$ that is the direct product of two cyclic groups $C$ and $C_1$ both of odd order $\textstyle\frac{1}{2}(q-1)/\mu$. Write $|C|=p_1^{u_1} \cdots p_t^{u_t}$ with $p_1,\ldots, p_t$ pairwise distinct prime numbers. Obviously, the subgroup $G_i$ of $G$ of order $p_i^{2u_i}$ has exponent $p^{u_i}$. Since $p\nmid(q-1)/\mu$, Lemma \ref{lemagosto10} applied to $G_i$ yields that $p_i^{u_i}$ divides $p-2$. Therefore, $|C|$ itself divides $p-2$ showing that $(\textstyle\frac{1}{2}(q-1)/\mu)\mid(p-2)$. From this, $\lambda(p^m-1)=2\mu(p-2)$ for a positive integer $\lambda$, whence either $p^m=3$, or
$p^m=7$ follow. We may assume that $S$ contains $M$. As in the proof of Lemma \ref{lemagosto10A}, this implies  $S=M$ since $|\aut(PSL(3,3))|=2|PSL(3,3)|$ and $|\aut(PSL(3,7))|=6|PSL(3,7)|$.

Assume that $p^m=7$. Then $N\cong PSL(3,7)$, and $S\cong UT(3,7)$ whose center $Z(S)$ has order $7$. The normalizer $L$ of $Z(S)$ in $N$ has order $4116=7^3\cdot 12$, and the factor group $L/Z(S)$ is the semidirect product of a normal subgroup $S/Z(S)$ of order $7^2$ by an abelian subgroup of order $12$. Such a group $L/Z(S)$ is a subgroup of the $\mathbb{K}$-automorphism group of the Nakajima extremal curve $\cX/Z(S)$ of genus $15=7\cdot (7-5)+1$.
Since a dihedral group of order bigger than $4$ is not abelian, this contradicts Lemma \ref{le12agostoA}.


Assume that $p^m=3$. Take a subgroup $C$ of $PSL(3,3)$ of order $13$. The Hurwitz formula applied to $C$ yields that $9=13(\bar{\gg}-1)+6\lambda$ where $\bar{\gg}$ is the genus of the quotient curve $\bar{\cX}=\cX/C$ and $\lambda$ is an integer. Therefore, $\bar{\gg}=0$ and hence $22=6\lambda$ which is impossible.
\end{proof}
\begin{lemma}
\label{lemagosto10B} Let $N$ be a normal subgroup of $\aut(\cX)$ such that the quotient curve $\bar{\cX}=\cX/N$ is rational. Then $N$ is not isomorphic to $PSL(2,q)$ with $q\geq 5$.
\end{lemma}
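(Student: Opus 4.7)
The proof would follow the template of Lemmas \ref{lemagosto10A} and \ref{lemagosto12B}. Writing $|N|=q(q^2-1)/d$ with $d=\gcd(2,q-1)$, I split into the cases $(p,q)=1$ and $q=p^m$, and use Lemma \ref{lemagosto10} together with the structural constraints on $S$ established in Section \ref{princip}. Recall that under the standing hypothesis that $\bar\cX=\cX/N$ is rational one has $p\mid |N|$, and $\aut(\cX)/N$ embeds in $PGL(2,\mathbb K)$.

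In the case $(p,q)=1$, write $q=r^k$ with $r$ prime. If $r=2$, the Sylow $2$-subgroup of $N$ is elementary abelian of rank $k$; by Remark \ref{2sub} any $2$-subgroup of $\aut(\cX)$ has a cyclic subgroup of index $2$, which forces $k\le 2$, leaving only $q=4$, and this collapses to $q=5$ via $PSL(2,4)\cong PSL(2,5)$. If $r$ is odd, the Sylow $r$-subgroup of $N$ has order $r^k$ and exponent $r$, so Lemma \ref{lemagosto10} yields $r^{k-1}\mid p-2$. Combined with $p\mid (q^2-1)/d$ (hence $p\le q+1$), this gives $r^{k-1}\le p-2\le q-1$; for $k\ge 2$ only finitely many triples $(r,k,p)$ survive, and I would rule each of them out individually.

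The residual subcase $k=1$, in which $q$ is an odd prime and Lemma \ref{lemagosto10} becomes vacuous, is the main obstacle. My plan is to pick $S$ containing a Sylow $p$-subgroup of $N$; since the $p$-Sylows of $PSL(2,q)$ are cyclic for $(p,q)=1$, $S\cap N$ is a cyclic normal subgroup of $S$, while $SN/N$ embeds as an elementary abelian $p$-subgroup of $PGL(2,\mathbb K)$ of rank at most $d(S)=2$. Propositions \ref{31ago2013} and \ref{b30ago2013} then force $S\cap N=\Phi(S)$, cyclic of order $p^{n-2}$. The normalizer $N_N(\Phi(S))$ is dihedral of order $2p^{n-2}$, producing inside $\aut(\cX)$ a dihedral subgroup incompatible with the simplicity of $N$ after an inductive descent along the $p$-quotients $\cX/Z(S)$ afforded by Proposition \ref{a30ag2013}: at the bottom of the descent, $|S|=p^2$, and Lemma \ref{le12agostoA} forces $\aut(\cX)$ to be solvable, while the simple subgroup $N\cong PSL(2,q)$ persists, yielding the contradiction.

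In the case $q=p^m$, the Sylow $p$-subgroup of $N$ is elementary abelian of order $p^m$; choosing $S$ to contain it, the rank-$2$ constraint on $S/(S\cap N)$ forces $m\ge n-2$. The Borel subgroup $(S\cap N)\rtimes T$ of $N$, with $T$ cyclic of order $(q-1)/d$, embeds in $\aut(\cX)$. Applying Lemma \ref{lemagosto10} to the odd prime-power Sylows of $T$ and exploiting the specific action of $T$ on $S\cap N$ should restrict $p^m$ to a short list of small values, in analogy with $\{5,17\}$ in Lemma \ref{lemagosto10A} and $\{3,7\}$ in Lemma \ref{lemagosto12B}; each would then be eliminated by a normalizer computation in $PSL(2,p^m)$ combined with Lemma \ref{le12agostoA} applied to an appropriate quotient curve of $\cX$, exactly as in the $p^m=7$ analysis in Lemma \ref{lemagosto12B}.
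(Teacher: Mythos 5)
There is a genuine gap, and it sits exactly where the real work of this lemma is. Your plan for the case $q=p^m$ is to apply Lemma \ref{lemagosto10} to the prime-power Sylow subgroups of the cyclic complement $T$ of order $(q-1)/d$ in a Borel subgroup of $N$. But Lemma \ref{lemagosto10} bounds $d^{u-e}$, the quotient of the order by the exponent, and for a cyclic group $u=e$, so the conclusion is the vacuous statement $1\mid p-2$. The analogy with Lemmas \ref{lemagosto10A} and \ref{lemagosto12B} breaks down precisely here: $PSU(3,q)$ and $PSL(3,q)$ contain maximal tori $C\times C_1$ that are homocyclic of rank two and of order prime to the characteristic, which is what makes $d^{u-e}$ large; by contrast every subgroup of $PSL(2,q)$ of order prime to $q$ is cyclic, dihedral, $\Alt_4$, $\Sym_4$ or $\Alt_5$, so no such subgroup exists and no short list of values of $p^m$ can be extracted this way. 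The paper's argument for this case is of a different nature: since $[S:T]=p$ for $T$ the (elementary abelian) Sylow $p$-subgroup of $N$, an element of $S\setminus T$ must induce an outer automorphism of $PSL(2,p^r)$ of order $p$ (it cannot centralize $N$, else $S$ would be abelian against Proposition \ref{propbp=3}), which forces $p\mid r$; one then pushes the normalizer $T\rtimes C$ of $T$ in $N$ down to the quotient curve $\cY=\cX/T$ and uses either the subgroup classification of $PGL(2,\K)$ (when $\cY$ is rational) or Proposition \ref{igusa1} via Proposition \ref{pro18feb2014} (when $\cY$ has genus $p-1$) to force $|C|=\frac{1}{2}(q-1)$ to divide $p-1$ or to lie in a short list for $p=3$, contradicting $r>1$. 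None of this mechanism appears in your proposal, and without it the case $q=p^m$ is not closed.

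The case $(p,q)=1$ is also mishandled, though less fatally: you overlook that Lemma \ref{lemagosto9D} kills it in two lines. Since $p^{n-1}$ divides $|N|$, a Sylow $p$-subgroup $T$ of $N$ chosen inside $S$ has index at most $p$ in $S$; if $p\nmid q$ then $T$ is cyclic, yet $T$ is either $S$ itself (excluded by Proposition \ref{propbp=3}) or a maximal subgroup $M_i$ (excluded by Proposition \ref{31ago2013} for $|S|>p^2$, and by the solvability of $\aut(\cX)$ from Lemma \ref{le12agostoA} for $|S|=p^2$). This makes your subdivision into $r=2$, $r$ odd with $k\ge 2$ (where your finiteness claim from $r^{k-1}\mid p-2$ is not actually justified), and $k=1$ unnecessary; worse, the configuration on which your $k=1$ plan rests, namely $S\cap N=\Phi(S)$ cyclic of order $p^{n-2}$, is incompatible with Lemma \ref{lemagosto9D}, and the subsequent ``inductive descent along $\cX/Z(S)$'' is not legitimate as stated, since you never show that $Z(S)$ is normalized by $N$ so that $N$ survives as a simple subgroup of the automorphism group of the quotient curve.
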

\begin{proof} Assume on the contrary that $N\cong PSL(2,q)$ with $q\ge 5$, and choose a Sylow $p$-subgroup $T$ of $N$. By Lemma \ref{lemagosto9D}, $T$ is a subgroup of $S$ of index at most $p$. By Proposition \ref{31ago2013}, $T$ is a non-cyclic group. From  the classification of subgroups of $PSL(2,q)$, see \cite[Chapter II. Hauptsatz 8.27]{huppertI1967} and \cite{maddenevalentini1982}, $T$ is an elementary abelian group of order $q$ where $q$ is a power of $p$. If $S=T$ then $S$ is elementary abelian as well, and hence $|S|=p^2$, by Proposition \ref{propbp=3}. But then, by Lemma \ref{le12agostoA}, $\aut(\cX)$ is solvable and hence contains no subgroup isomorphic to $PSL(2,q)$ with $q\geq 5$.

Therefore,  $[S:T]=p$. We show that $q=p^r$ with $r$ divisible by $p$.  Take an element $s\in S$ not in $T$. Since $s$ normalizes $N$, either $s$ induces an automorphism of $N$, or centralizes $N$. The latter case  cannot actually occur as $S$ is not abelian by Proposition \ref{propbp=3}. Thus $s\in \aut(N)$. From \cite[Chapter II, Aufgabe 15]{huppertI1967}, the automorphism group of $PSL(2,p^r)$ is $P\Gamma L(2,p^r)$. Since $P\Gamma L(2,p^r)$ only contains $p$-elements other than those in $PSL(2,p^r)$ when $p\mid r$, we have that $r=\lambda p$ for an integer $\lambda$.

The normalizer of $T$ in $N$ is a semidirect product $T\rtimes C$ with a cyclic group $C$ of order $\textstyle\frac{1}{2}(q-1)$. Since $T$ is a normal subgroup of $S$, the normalizer of $T$ in $\aut(\cX)$ also contains $S$.
Actually, $S$ also normalizes $T\rtimes C$. In fact, since $S$ normalizes $T$, any subgroup $s^{-1}(T\rtimes C)s$ with $s\in S$ is a subgroup of $N$ containing $T$. Since $p\geq 5$, the classification of subgroups of $PSL(2,q)$, see \cite[Chapter II. Hauptsatz 8.27]{huppertI1967} and \cite{maddenevalentini1982}, yields that $N$ has a unique subgroup of order $\textstyle\frac{1}{2}q(q-1)$ containing $T$. Therefore, $s^{-1}(T\rtimes C)s=T\rtimes C$.
It turns out that $S(T\rtimes C)$ is a subgroup of the normalizer of $T$ in $\aut(\cX)$ whose order is $\textstyle\frac{1}{2}(q-1)|S|$. Therefore, since $[S:T]=p$, the factor group $S(T\rtimes C)/T$ has order $\textstyle\frac{1}{2}p(q-1)$, and it may be regarded as a $\mathbb{K}$-automorphism group of the quotient curve ${\cY}=\cX/T$.
Observe that $\textstyle\frac{1}{2}p(q-1)\geq \ha5(5^5-1)>60$.

Two cases arise according as ${\cY}$ is rational or not.

In the former case, $S(T\rtimes C)/T$ is isomorphic to a subgroup of $PGL(2,\K)$. From the classification of subgroups of $PSL(2,\mathbb{K})$, see \cite[Chapter II. Hauptsatz 8.27]{huppertI1967} and \cite{maddenevalentini1982},  $q=p$ must hold. But we have already shown that $r>1$, a contradiction.

In the latter case, Proposition \ref{pro27luglio2014} yields that $T$ is one of the subgroups $M_i$ with $3\le i \le p+1$, and hence  by Proposition \ref{pro18feb2014} the curve ${\cY}$ satisfies the hypotheses of Proposition \ref{igusa1}.
For $p>3$, Proposition \ref{igusa1} yields that $C$ is isomorphic to a subgroup of a dihedral group of order $2(p-1)$. Therefore $\textstyle\frac{1}{2}(q-1)$ divides $p-1$. Since $q=p^r$ with $r>1$ is this is impossible.
For $p=3$, Proposition \ref{igusa1} gives some more possibilities namely that $C$ is isomorphic to a cyclic subgroup of $GL(2,3)$. Then $|C|\in \{2,3,4,6,8\}$, but none of these number is equal to $\textstyle\frac{1}{2}(q-1)$ for $q=3^{r}$ with $r$ divisible by $3$.
\end{proof}
\begin{lemma}
\label{lemagosto12E} Let $N$ be a normal subgroup of $\aut(\cX)$ such that the quotient curve $\bar{\cX}=\cX/N$ is rational. Then $N$ is not isomorphic to  $N\cong {\rm{Alt}_7}$ or $N\cong {\rm{M}}_{11}$.
\end{lemma}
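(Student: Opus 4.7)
The plan is to rule out both $N\cong \mathrm{Alt}_7$ and $N\cong M_{11}$ by first using Lemma~\ref{lemagosto9D} to cut the problem down to a few small cases, then handling the cases with $|S|=p^2$ by a crude order comparison via Lemma~\ref{le12agostoA}, and finally handling the lone remaining case by a centralizer argument that exploits the smallness of $\mathrm{Out}(\mathrm{Alt}_7)$ and $\mathrm{Out}(M_{11})$.

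Since both $|\mathrm{Alt}_7|=2520$ and $|M_{11}|=7920$ have $p$-part at most $p^2$ for every odd prime $p$, and equal to $p^2$ only when $p=3$, Lemma~\ref{lemagosto9D} (which forces $p^{n-1}$ to divide $|N|$), together with the standing assumption $|S|\geq p^2$ (we are not in case (i) or (ii) of Theorem~\ref{princ}), leaves only two cases to treat: either $|S|=p^2$ with $p$ ranging over $\{3,5,7\}$ (for $\mathrm{Alt}_7$) or $\{3,5,11\}$ (for $M_{11}$), or $|S|=p^3=27$ with $p=3$.

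For the first case I would apply Lemma~\ref{le12agostoA}, which gives the uniform upper bound $|\aut(\cX)|\leq 2p^2(p-1)$. A direct check shows that for each admissible $p$ this number is strictly smaller than $|N|$, so $N$ cannot sit inside $\aut(\cX)$, a contradiction.

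The main obstacle is the case $p=3$, $|S|=27$, and I would handle it as follows. Let $C=C_G(N)$ inside $G=\aut(\cX)$. Since $N$ is simple and non-abelian, $N\cap C=Z(N)$ is trivial, so $NC$ is an internal direct product $N\times C$; it is normal in $G$, and $G/NC$ embeds into $\mathrm{Out}(N)$. Using $\mathrm{Out}(\mathrm{Alt}_7)\cong C_2$ and $\mathrm{Out}(M_{11})=1$, the index $[G:NC]$ divides $2$, so $S\leq NC=N\times C$ by the parity of $|S|=27$. Consequently $S=(S\cap N)\times (S\cap C)$; comparing $3$-parts in the identity $|SN|=|S|\,|N|/|S\cap N|$ shows that $S\cap N$ must be the full Sylow $3$-subgroup of $N$, of order $9$, whence $|S\cap C|=3$. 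Since both direct factors are abelian, so is $S$. But an abelian $S$ must have order $p^2=9$ by Proposition~\ref{propbp=3}, contradicting $|S|=27$ and completing the proof.
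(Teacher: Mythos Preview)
Your proof is correct and follows essentially the same line as the paper's. Both reduce (via Lemma~\ref{lemagosto9D} and the $3$-part of $|N|$) to the case $p=3$, $|S|=27$, then use that $\mathrm{Out}(N)$ has no element of order $3$ to force $S\le N\times C_G(N)$ and hence $S$ abelian, contradicting Proposition~\ref{propbp=3}. The only cosmetic difference is that you dispose of the $|S|=p^2$ cases explicitly via the order bound from Lemma~\ref{le12agostoA}, whereas the paper has already noted (in the paragraph preceding Lemma~\ref{lemagosto10A}) that $\aut(\cX)$ is solvable when $n=2$, so those cases are excluded from the outset; your direct-product decomposition $S=(S\cap N)\times(S\cap C)$ is also a bit more carefully argued than the paper's phrasing.
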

\begin{proof} Since both $\rm{Alt}_7$ and  $\rm{M}_{11}$ have subgroups of odd non-prime order $d$ only for $d=9$, Lemma \ref{lemagosto9D} yields $p=3$ and $n=3$. Since the quotient curve $\bar{\cX}=\cX/N$ is rational, and  neither $\rm{Alt}_7$ nor  $\rm{M}_{11}$ has an outer automorphism of order $3$, the case $n=3$ can only occur if each element of $S\setminus N$ centralizes $N$. But then $S$ would be abelian contradicting Proposition \ref{propbp=3}.
\end{proof}
\begin{proposition}
\label{pro9agostoA} Let $N$ be a minimal normal subgroup of $\aut(\cX)$ such that the quotient curve $\bar{\cX}=\cX/N$ is rational. Then  $N$ is an elementary abelian group.
\end{proposition}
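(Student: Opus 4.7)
The plan is to exploit the fact that any minimal normal subgroup of a finite group is characteristically simple, so that $N\cong T^k$ for some simple group $T$ and some integer $k\ge 1$, and then to rule out the non-abelian case by combining a $2$-rank argument based on Lemma \ref{lem6agos} with the case analysis already completed in Lemmas \ref{lemagosto10A}, \ref{lemagosto12B}, \ref{lemagosto10B} and \ref{lemagosto12E}.

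First I would dispose of the easy direction: if $T$ is abelian, then $T$ is cyclic of prime order, $N$ is simply the direct product of $k$ copies of $T$, and there is nothing further to prove. So suppose, for a contradiction, that $T$ is non-abelian simple. Then $T$ embeds as a subgroup of $\aut(\cX)$, hence Remark \ref{2sub} forces $T$ to be isomorphic to one of $PSL(2,q)$ with $q\ge 5$, $PSL(3,q)$ with $q\equiv 3 \pmod 4$, $PSU(3,q)$ with $q\equiv 1 \pmod 4$, ${\rm{Alt}}_7$, or $M_{11}$.

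Next I would reduce to the case $k=1$ via a $2$-rank argument. By Lemma \ref{lem6agos}, every $2$-subgroup of $\aut(\cX)$ has a cyclic subgroup of index $2$; an elementary computation then shows that no elementary abelian $2$-subgroup of $\aut(\cX)$ can have order larger than $4$. On the other hand, each of the groups listed above has a Sylow $2$-subgroup which is dihedral of order at least $4$ or semidihedral, and hence contains a Klein four-group. Consequently, a Sylow $2$-subgroup of $N=T^k$ would contain an elementary abelian subgroup of order $4^k=2^{2k}$, which is incompatible with the above bound unless $k=1$. Thus $N\cong T$ is a non-abelian simple normal subgroup of $\aut(\cX)$ whose quotient $\cX/N$ is rational; but each of the five candidates for $T$ is then directly excluded by one of Lemmas \ref{lemagosto10A}, \ref{lemagosto12B}, \ref{lemagosto10B} and \ref{lemagosto12E}. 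This contradiction shows that $T$ must be abelian, so $N$ is elementary abelian.

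The main obstacle, as I see it, is not any individual deduction but the careful verification, in the borderline cases such as $PSL(2,4)\cong PSL(2,5)\cong{\rm{Alt}}_5$ where the Sylow $2$-subgroup has order only $4$, that the Sylow $2$-subgroup still contains a Klein four-group; this is routine but must be checked case-by-case to make the $2$-rank step airtight.
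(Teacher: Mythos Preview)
Your argument is correct and follows essentially the same route as the paper: both use the characteristically-simple structure $N\cong T^k$, invoke Remark \ref{2sub} to pin down the Sylow $2$-subgroup of each simple factor as dihedral or semidihedral, extract a Klein four-group from each factor to force $k=1$ via Lemma \ref{lem6agos}, and then dispatch the single simple factor with Lemmas \ref{lemagosto10A}--\ref{lemagosto12E}. The only cosmetic difference is that the paper phrases the $2$-rank step as ``for $k>1$ one gets an elementary abelian $2$-group of order $8$'' rather than your sharper $4^k$, and it does not pause over the $\mathrm{Alt}_5$ borderline since dihedral of order $4$ is already the Klein four-group.
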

\begin{proof} Assume on the contrary that $N$ is isomorphic to the direct product $R_1\times \ldots\times R_k$ of pairwise isomorphic non-abelian simple groups. Let $U_i$ be a Sylow $2$-subgroup of $R_i$ for $i=1,\ldots k$. By Remark \ref{2sub}, $U_i$ is either dihedral or semidihedral. Therefore $N$ contains a $2$-subgroup which is the direct product of $k$ dihedral, or semidihedral groups. This implies for $k>1$ that $N$ contains an elementary abelian subgroup of order $8$, but this contradicts Lemma \ref{lem6agos}. Therefore $k=1$. Now, the assertion follows from Remark \ref{2sub} together with Lemmas \ref{lemagosto10A}, \ref{lemagosto12B},  \ref{lemagosto10B}, and \ref{lemagosto12E}.
\end{proof}
\begin{lemma}
\label{6ago2014} Let $U$ be a $2$-subgroup of $\aut(\cX)$. If $U$ normalizes $M_1$ (or $M_2$) then $U$ is cyclic.
\end{lemma}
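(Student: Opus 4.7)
My plan is to exploit the rationality of the quotient $\bar{\cX}:=\cX/M_1$ (Proposition \ref{pro27luglio2014}) in order to replace the original problem by an easier one about $2$-subgroups of $\PGL(2,\K)$. Without loss of generality I assume that $U$ normalizes $M_1$. Since $|U|$ is a power of $2$ and $|M_1|=p^{n-1}$ is a power of the odd prime $p$, the kernel of the action of $U$ on $\bar{\cX}$---which by Galois theory coincides with $U\cap M_1$---is trivial, so $U$ embeds as a $2$-subgroup of $\aut(\bar{\cX})=\PGL(2,\K)$. Once I show that $U$ fixes at least one point of $\bar{\cX}$, I will be done: the stabilizer of such a point in $\PGL(2,\K)$ is isomorphic to $\K\rtimes\K^*$, and since the additive factor $\K$ has exponent $p$, the projection of $U$ onto $\K^*$ is injective; being a finite subgroup of the multiplicative group of an algebraically closed field, $U$ is then cyclic.

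The crucial input is the identification of the set $\Pi$ of points of $\cX$ fixed by some non-trivial element of $M_1$ as precisely $\Omega_1$. For $P\in\Omega_1$ this is immediate from Proposition \ref{30ag2013}, which defines $M_1$ as the unique maximal subgroup of $S$ containing $S_P$. For $P\notin\Omega_1\cup\Omega_2$, Proposition \ref{propap=3} gives $S_P=\{1\}$, so $M_1$ acts freely on $P$. The delicate case is $P\in\Omega_2$, where one needs $S_P\cap M_1=\{1\}$: since $|S_P|=p$ it suffices to show $S_P\not\subset M_1$. If instead $S_P\subset\Phi(S)=M_1\cap M_2$, then $S_P$ would lie in every maximal subgroup of $S$, in particular in one of the semiregular subgroups $M_3,\ldots,M_{p+1}$ of Proposition \ref{30ag2013}(iv), contradicting their semiregularity; hence $S_P$ lies in a unique maximal subgroup, which by the symmetric argument is $M_2$.

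With $\Pi=\Omega_1$ in place, $U$ preserves $\Omega_1$, because conjugation by $u\in U$ sends $\mathrm{Fix}(m)$ to $\mathrm{Fix}(umu^{-1})$ for each $m\in M_1\setminus\{1\}$. Passing to the quotient, $\Omega_1$ splits into $|\Omega_1|/(|M_1|/p)=p$ orbits of $M_1$ (each of length $p^{n-2}$), corresponding to $p$ distinct branch points of $\cX\to\bar{\cX}$; and $U$ permutes these $p$ branch points. Since $p$ is odd while $|U|$ is a $2$-power, the orbit-size identity modulo $2$ forces at least one of these $p$ points to be fixed by $U$, which is exactly the fixed point required in the first paragraph.

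The main subtlety is the identification $\Pi=\Omega_1$: it relies on the structural facts about $S$ from Proposition \ref{30ag2013}, most crucially that $\Phi(S)=M_1\cap M_2$ (which follows from $|\Phi(S)|=|S|/p^2$ together with the maximality of $M_i$) combined with the semiregularity of the remaining $p-1$ maximal subgroups. Once this identification is secured, the remainder is routine parity counting together with the standard description of the stabilizer of a point in $\PGL(2,\K)$.
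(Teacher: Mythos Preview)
Your argument is correct and its core coincides with the paper's: you identify $\Omega_1$ as the set of points having a non-trivial $M_1$-stabilizer, observe that $U$ therefore preserves $\Omega_1$ and permutes the $p$ $M_1$-orbits on it, and use parity to pin down a $U$-invariant orbit. Your justification of the equality $\Pi=\Omega_1$ is in fact more detailed than the paper's, which simply asserts it.

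The only genuine difference is the finishing move. The paper stays on $\cX$: having found a $U$-invariant $M_1$-orbit of odd length $p^{n-2}$, it applies parity once more to obtain a fixed point of $U$ on $\cX$ itself, and then concludes cyclicity from the standard fact that the prime-to-$p$ part of a point-stabilizer in $\aut(\cX)$ is cyclic. You instead descend to the rational quotient $\bar\cX=\cX/M_1$, view the $U$-invariant $M_1$-orbit as a fixed branch point, and finish via the structure of point-stabilizers in $\PGL(2,\K)$. Both are valid; the paper's route is marginally more economical because it does not need the rationality of $\bar\cX$ (Proposition~\ref{pro27luglio2014}), while yours makes the final cyclicity step very concrete.
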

\begin{proof} By Proposition \ref{pro27luglio2014}, $M_1$ has $p$ orbits on $\Omega_1$ each of length $p^{n-2}$. Since $\Omega_1$ is the set of points which are fixed by some non-trivial elements of $M_1$, $U$ preserves $\Omega_1$, and induces a permutation group on the set of the $p^{n-2}$ $M_1$-orbits. As $U$ has order a power of $2$, it preserves some of these $M_1$-orbits. Since the length
of such a $U$-invariant $M_1$-orbit is odd, some point of it must be fixed by $U$. Therefore, $U$ fixes a point of $\cX$, and hence $U$ is cyclic.
\end{proof}

We are in a position to prove Theorem \ref{fullaut}.

Our proof is by induction on the order of $S$. The assertion holds for $|S|=p^2$ by Lemma \ref{le12agostoA}. Assume that it holds for all extremal Nakajima curves with Sylow $p$-subgroup of order $p^k$ with $2\le k \le n-1$.  Take a minimal normal subgroup $N$ of $\aut(\cX)$. If the quotient curve $\bar{\cX}=\cX/N$ is not elliptic then Lemmas \ref{lemagosto9C} and \ref{lemagosto9D}  together with Proposition \ref{pro9agostoA} show that $N$ is a $p$-group and hence it is a subgroup of $S$. If $\bar{\cX}=\cX/N$ is elliptic and $N$ is not a $p$-group,
replace $N$ with $\Phi(S)$ when $S$ is a normal subgroup of $\aut(\cX)$, otherwise replace $N$ with
or $M_1$ (or $M_2$) according to Lemma \ref{lemagosto9E}. Therefore, $N$ may be assumed to be a $p$-group.

If $N$ is semiregular on $\cX$, then the quotient curve $\bar{\cX}=\cX/N$ has positive $p$-rank, and one of the cases (ii) or (iii) of Theorem \ref{princ} occurs. Therefore, $\bar{\cX}$ is either an extremal Nakajima curve, or a curve of genus $p-1$ given in Proposition \ref{igusa1}, where $S/N$ is a Sylow $p$-subgroup of $\aut(\bar{\cX})$. In case (iii), Theorem \ref{fullaut} holds for $\bar{\cX}$ by induction, and accordingly let $\bar{L}=\bar{S}$ when $\bar{S}$ is a normal subgroup of $\aut(\bar{\cX})$, but let $\bar{L}=\bar{M}$ when the sporadic case $p=3$ with  $GL(2,3)$ occurs. In case (ii), Proposition \ref{igusa1} holds for $\bar{\cX}$, and
let $\bar{L}=\bar{S}$ when $\bar{S}$ is a normal subgroup of $\aut(\bar{\cX})$, but let $\bar{L}$ be the identity subgroup when the sporadic case $p=3$ with $GL(2,3)$ occurs.
Since $\bar{L}$ is contained in $S/N$, there exists a normal subgroup $L$ of $\aut(\cX)$ containing $N$ such that $L/N=\bar{L}$. Then $L$ is a $p$-group and 
$$\frac{\aut(\cX)}{L}\cong \frac{\aut(\cX)/N}{L/N}\cong \frac{\bar{G}}{\bar{L}}$$ 
where $\bar{G}$ is a subgroup of $\aut(\bar{\cX})$. 
If $\bar{S}=\bar{L}$ then $S=L$ and hence $\bar{G}$ has order prime to $p$. By induction, $\bar{G}$ is a subgroup of a dihedral group of order $2(p-1)$, and hence Theorem \ref{fullaut} holds.
If $[\bar{S}:\bar{L}]=p$ then $p=3$, and  $3\mid |G|$. By induction, $\bar{G}$ is isomorphic to a subgroup of $GL(2,3)$, and hence Theorem \ref{fullaut} holds.

If $N$ is not semiregular on $\cX$, Proposition \ref{propcp=3} shows that $N=M_1$ (or $N=M_2$). From Proposition \ref{pro27luglio2014}, the quotient curve $\bar{\cX}=\cX/N$ is rational. Therefore,
$\aut(\cX)/N$ is isomorphic to a subgroup $\Gamma$ of $PGL(2,\mathbb{K})$. As $S$ is a Sylow $p$-subgroup of $\aut(\cX)$ containing $M_1$ and $[S:M_1]=p$, the order of $\Gamma$ is divisible by $p$ but not by $p^2$. Also, a Sylow $2$-subgroup of $\Gamma$ is cyclic, by Lemma \ref{6ago2014}. In particular, $\Gamma$ is not isomorphic to $\Alt_4$, or $\Sym_4$, or $\Alt_5$, or $PSL(2,q)$, or $PGL(2,q)$ with a power $q$ of $p$. From the classification of finite subgroups of $PGL(2,\mathbb{K})$, see \cite{maddenevalentini1982} or \cite[Theorem A.8]{hirschfeld-korchmaros-torres2008}, we are left with only one possibility for $\Gamma$, namely a subgroup of the semidirect product of $S/M_1$ by a cyclic group whose order divides $p-1$.
Hence Theorem \ref{fullaut} holds.

Our proof of Theorem \ref{fullaut} also shows that if $\K(\cX)$ is not an unramified Galois extension of the Artin-Mumford function field then the dihedral subgroup of order $2(p-1)$ may be weakened to the cyclic group of order $p-1$.

\section{Nakajima extremal curves with small genera for $p=3$}
\begin{proposition} Let $p=3$. If $S$ has maximal class then
$\Phi(S)$ is an abelian metacyclic group.
\end{proposition}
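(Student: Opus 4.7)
I would proceed by induction on $n=\log_p|S|$, with $p=3$ fixed throughout. The base cases $n\le 4$ are immediate: one has $|\Phi(S)|\le p^2$, so $\Phi(S)$ is automatically abelian of rank at most two, whence abelian and metacyclic. (For $n=3$, Proposition~\ref{a31ago2013} identifies $S\cong UT(3,3)$ and gives $|\Phi(S)|=p$.)

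For the induction step with $n\ge 5$, I would reduce via the central quotient $\bar\cX=\cX/Z(S)$. By Proposition~\ref{propgp=3}, $Z(S)$ is semiregular on $\cX$, and since $S$ has maximal class one has $|Z(S)|=p$. Proposition~\ref{a30ag2013} then yields that $\bar\cX$ is itself Nakajima extremal, with Sylow $p$-subgroup $\bar S=S/Z(S)$, and the maximal-class property passes to $\bar S$ via \cite[Chapter III, 14.2 Hilfssatz]{huppertI1967}. The containment $Z(S)\subseteq\gamma_2(S)=\Phi(S)$, valid for any maximal-class $p$-group of order at least $p^3$, yields the identification $\Phi(\bar S)=\Phi(S)/Z(S)$, which by induction is abelian and metacyclic.

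The crucial task is now to lift these two properties to $\Phi(S)$. Centrality of $Z(S)$ inside $\Phi(S)$ combined with the abelianness of $\Phi(S)/Z(S)$ forces $[\Phi(S),\Phi(S)]\subseteq Z(S)$, so the commutator subgroup of $\Phi(S)$ has order $1$ or $p$. To force triviality I would appeal to the fundamental subgroup. Since $|S|>p^{p+1}=81$, \cite[Chapter III, 14.22 Satz]{huppertI1967} supplies a unique regular maximal subgroup of $S$, and the argument of Proposition~\ref{31mag2014c} identifies it with one of the semiregular $M_3,\dots,M_{p+1}$; call it $M$. By Remark~\ref{14mag2014}, $d(M)=p-1=2$; combined with regularity and $p=3$, this forces the nilpotency class of $M$ to be at most two, so that $M'$ is cyclic of order at most $p$. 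The plan is to show $M$ itself is abelian: a nontrivial $M'$ would be a normal subgroup of $S$ of order $p$, and uniqueness of the minimal normal subgroup in a maximal-class $p$-group would yield $M'=Z(S)$; one then derives a contradiction by combining the semiregularity of $\Phi(S)$ on $\cX$ (Proposition~\ref{propcp=3}) with the orbit structure of $M$ on the $M$-fibers over $\cX/M$. Once $M$ is known to be abelian, the inclusion $\Phi(S)\subseteq M$ realises $\Phi(S)$ as an abelian subgroup of the two-generated abelian $p$-group $M$, so $d(\Phi(S))\le d(M)=2$, which is precisely abelian metacyclicity.

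The principal technical obstacle I anticipate is the step forcing $M$ to be abelian. All the surrounding structure (maximal class, regularity, two-generation, and the semiregular action of $\Phi(S)$ on $\cX$) points toward $M'=1$, but converting the hypothesis $M'=Z(S)$ into a clean contradiction with the curve action is the delicate core of the argument.
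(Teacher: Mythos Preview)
Your proposal has a genuine gap at the claim ``combined with regularity and $p=3$, this forces the nilpotency class of $M$ to be at most two.'' This is false. The metacyclic group
\[
G=\langle a,b \mid a^{27}=b^9=1,\ b^{-1}ab=a^4\rangle
\]
has order $243$, is $2$-generated, and is regular (metacyclic $p$-groups for odd $p$ are regular, \cite[Chapter III, Satz 10.2]{huppertI1967}); yet $\gamma_2=\langle a^3\rangle$, $\gamma_3=\langle a^9\rangle$, $\gamma_4=1$, so its class is $3$. Thus regularity together with $d(M)=2$ does not bound the class by $2$, and your conclusion that $M'$ is cyclic of order at most $p$ does not follow. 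Since the remainder of your argument (showing $M$ abelian and then deducing abelianness of $\Phi(S)$) rests on this step --- and you yourself flag the subsequent contradiction from $M'=Z(S)$ as unfinished --- the inductive scheme does not close.

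The paper proceeds quite differently and avoids the fundamental subgroup altogether. It first invokes \cite[Theorem 5.2]{berkovich2002} to obtain that \emph{every} subgroup of a $3$-group of maximal class is $2$-generated, so in particular $d(\Phi(S))=2$; metacyclicity is then immediate once abelianness is shown. For abelianness it argues by contradiction: if $\Phi(S)$ were non-abelian, results of Lange--Gail on two-generated Frattini subgroups \cite{langegail} produce a metacyclic subgroup $B\le S$ with $\Phi(B)=\Phi(S)$; Proposition~\ref{d4set2013} forces $B\lneq S$, hence $[B:\Phi(S)]=p$, and Burnside's basis theorem then makes $B$ (and therefore $\Phi(S)$) cyclic --- a contradiction. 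This is a purely group-theoretic argument that uses the curve only through Proposition~\ref{d4set2013}, whereas your route tries to leverage the action of $M$ on $\cX$ at the crucial step without a mechanism that actually produces the needed contradiction.
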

\begin{proof} 
We may assume that $|\Phi(S)|=3^m$ with $m\geq 3$. From (ii) of Proposition \ref{30ag2013}, $|S|=3^{m+2}\geq 3^5$. From \cite[Theorem 5.2]{berkovich2002}, every subgroup of $S$ can be generated by two elements. Therefore, $d(\Phi(S))=2$. Assume on the contrary that $\Phi(S)$ is not abelian. From \cite[Theorem 3]{langegail}, $\Phi(S)$ is metacyclic. Since $\Phi(S)$ is supposed to be non-abelian, \cite[Theorem 1]{langegail} shows the existence of a metacyclic subgroup $B$ of $S$ such that $\Phi(B)=\Phi(S)$. By Proposition \ref{d4set2013}, $B$ is a proper subgroup of $S$ containing $\Phi(S)$. Since $B$ is finite, $B\neq \Phi(B)$ and hence $[B:\Phi(S)]=p$. The Burnside fundamental theorem, \cite[Chapter III, Satz 3.15]{huppertI1967} yields that $B$ and hence $\Phi(S)$ is cyclic, a contradiction.
\end{proof}
\subsection{Cases $|S|=3,9$}
We prove that if $\cX$ satisfies the hypotheses of Theorem \ref{princ} for $|S|=3$ then (ii) holds.
For this case, our hypothesis (\ref{hyp}) yields $\gg=2$. From (\ref{eq2deuring}), every automorphism of $\aut(\cX)$ of order $3$ has two fixed points on $\cX$. Therefore, (i) of Theorem \ref{princ} cannot occur, and  the assertion follows from Proposition \ref{igusa1}.

{}From now on, $|S|=9$ and  $\cX$ is a curve satisfying the hypotheses of Theorem \ref{princ} but does not have the property given in (i) of Theorem \ref{princ}
\begin{proposition}
\label{pro28luglio2014} Let $p=3$. Up to isomorphisms, the Artin-Mumford curve with affine equation (\ref{artinmumford}) is the unique extremal Nakajima curve of genus $4$.
\end{proposition}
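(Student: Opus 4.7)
Since $\cX$ is a Nakajima extremal curve with $p=3$ and $\gg=4$, one has $|S|=9=p^2$; by Proposition \ref{propbp=3} the group $S$ is elementary abelian, and by Proposition \ref{propap=3} there are exactly two short orbits $\Omega_1,\Omega_2$ of length $3$, with cyclic stabilizers $M_1=\langle\sigma\rangle$ and $M_2=\langle\tau\rangle$ such that $S=M_1\times M_2$ and $M_1\cap M_2=\{1\}$. The plan is to put the curve in explicit form via the rational quotient $\cX/M_1$ (rational by Proposition \ref{pro27luglio2014}). Writing $\K(\cX)^{M_1}=\K(y)$, the element $\tau$ induces an order-$3$ M\"obius transformation on $\mathbb{P}^1_y$; in characteristic $3$ any such transformation is a translation with a unique fixed point, so after rescaling $y$ I may assume $\tau\colon y\mapsto y+1$. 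Since $\K(\cX)|\K(y)$ is Artin--Schreier of degree $3$, I may write $\K(\cX)=\K(x,y)$ with $x^3-x=\alpha(y)$, $\sigma\colon x\mapsto x+1$, $\sigma(y)=y$.

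The next task is to normalize $\alpha$. The three points of $\Omega_1$ are precisely the ramified places of $\cX\to\cX/M_1$, and the Hurwitz formula $2\gg-2=6$ combined with the Artin--Schreier different-exponent formula $(p-1)(m+1)$ forces each pole of $\alpha$ to be simple. These three poles form a single $\tau$-orbit on $\mathbb{A}^1_y$, so after translating $y$ they are $\{0,1,-1\}$. The image of $\Omega_2$ is the unique $\tau$-fixed place $y=\infty$, where the cover is unramified; a standard Artin--Schreier normalization (replacing $x$ by $x+q(y)$ for a suitable polynomial $q$, and then by $x+d$ for a suitable $d\in\K$) eliminates the polynomial part of $\alpha$ and any additive constant, leaving
\[
\alpha(y)=\tfrac{c_0}{y}+\tfrac{c_1}{y-1}+\tfrac{c_2}{y+1},\qquad c_i\in\K.
\]

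The decisive step is to force $c_0=c_1=c_2$. Put $w=\tau(x)-x$. Applying $\tau$ to $x^3-x=\alpha(y)$ and using $(a+b)^3=a^3+b^3$ in characteristic $3$ yields $w^3-w=\alpha(y+1)-\alpha(y)$. Since $\sigma\tau=\tau\sigma$, one computes $\sigma(w)=\sigma\tau(x)-\sigma(x)=\tau(x+1)-(x+1)=\tau(x)-x=w$, so $w\in\K(\cX)^{M_1}=\K(y)$. Now $\alpha(y+1)-\alpha(y)$ has at most simple poles at $y=0,1,-1$ with residues $c_1-c_0,\,c_2-c_1,\,c_0-c_2$, while any element of the form $h^3-h$ with $h\in\K(y)$ has poles only of order divisible by $3$; hence all three residues must vanish, yielding $c_0=c_1=c_2=:c$, with $c\in\K^{*}$ since $\alpha\ne 0$. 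This ``commuting trick'' is the crux of the argument and the one place where Artin--Schreier theory does genuine work rather than formal bookkeeping.

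Finally, a direct partial-fraction computation in characteristic $3$ gives
\[
c\Bigl(\tfrac{1}{y}+\tfrac{1}{y-1}+\tfrac{1}{y+1}\Bigr)=\frac{-c}{y^{3}-y},
\]
so $x^3-x=\alpha(y)$ rewrites as $(x^3-x)(y^3-y)=-c$, which is the Artin--Mumford equation \eqref{artinmumford} with constant in $\K^{*}$. Since the Artin--Mumford curve is itself a Nakajima extremal curve of genus $(p-1)^2=4$, the asserted uniqueness follows.
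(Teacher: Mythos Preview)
Your proof is correct and takes a genuinely different route from the paper's argument. The paper proceeds geometrically: it embeds $\cX$ as a canonical curve in $\PG(3,\K)$ (using non-hyperellipticity from Proposition~\ref{prop6febb2013}), invokes Lemma~\ref{flag} to find an $S$-invariant flag, shows that each $\Omega_i$ is a collinear triple, projects from the common point of the two lines to obtain a plane sextic with two triple points, rules out the possibility that the resulting $g_6^2$ is composed with an involution, and finally derives the Artin--Mumford equation by imposing invariance under the two translations $(X,Y)\mapsto(X+1,Y)$ and $(X,Y)\mapsto(X,Y+1)$ on a degree-six polynomial.

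Your approach bypasses all the projective geometry by working directly with the Artin--Schreier presentation of $\K(\cX)$ over the rational quotient $\K(\cX)^{M_1}$ (Proposition~\ref{pro27luglio2014}). The Hurwitz/Deuring--Shafarevich count pinning down three simple poles, the observation that the induced action of $\tau$ on $\mathbb{P}^1_y$ is a translation with unique fixed point $\infty$, and especially the ``commuting trick'' forcing $w=\tau(x)-x\in\K(y)$ and hence $c_0=c_1=c_2$, are all clean and self-contained. The final partial-fraction identity in characteristic~$3$ drops out immediately.

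What each buys: the paper's method gives concrete projective information (the canonical model, the configuration of the short orbits in $\PG(3,\K)$) that may be of independent interest, but at the cost of a longer argument touching linear series and plane-curve singularities. Your method is shorter, purely function-field-theoretic, and isolates the exact algebraic mechanism (commutativity of $S$ forcing equal residues) that makes uniqueness work; it would also generalize more readily to analogous Artin--Schreier classification problems. One small presentational point: you might state explicitly, before locating the poles, that the three ramification points downstairs lie in $\mathbb{A}^1_y$ because they form a length-$3$ orbit under $y\mapsto y+1$, whose only fixed point is $\infty$; you do have all the ingredients, but the logical order could be made slightly tighter.
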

\begin{proof}
{}From Propositions  \ref{propap=3} and \ref{propbp=3}, $\cX$ is an ordinary curve of genus $\gg=4$ with  an elementary abelian subgroup $S$ of $\aut(\cX)$ of order $9$.

Let $N$ be the kernel of the permutation representation of $S$ on $\Omega_1\cup \Omega_2$. If $N$ is not trivial then it has order $3$, and the Hurwitz genus formula applied to $N$ gives $6=2(\gg-1)\ge 6(\bar{\gg}-1)+24$.
Therefore $S$ acts on $\Omega_1\cup\Omega_2$ faithfully.

By Proposition \ref{prop6febb2013}, $\cX$ is assumed to be a canonical curve embedded in $PG(3,\mathbb{K})$. Then $S$ extends to a subgroup of $PG(3,\mathbb{K})$
which preserves $\cX$ and acts on $\cX$ faithfully.

According to Lemma \ref{flag}, choose the projective coordinate system $(X_0:X_1:X_2:X_3)$ in $PG(3,\mathbb{K})$ in such a way that 
$S$ preserves the canonical flag $$P_0\subset\Pi_1\subset \Pi_2$$
where $P_0=(1:0:0:0)$, $\Pi_1$ is the line through $P_0$ and $P_1=(0:1:0:0)$ while $\Pi_2$ is the plane of equation $X_3=0$. Here $P_0\not\in \cX$, since $S$ fixes no point in $\cX$. Moreover,
$\Pi_2\cap\cX=\Omega_1\cup \Omega_2$. In fact, for any point $R\in \Pi_2\cap \cX$,  Proposition \ref{propap=3} implies that the $S$-orbit of $R$ has size $9$ unless $R\in \Omega_1\cup \Omega_2$. On the other hand $S$ preserves $\Pi_2\cap \cX$, and this implies that the $S$-orbit of $R$ cannot exceed $6$.
\begin{lemma}
\label{lem7feba2013} Both $\Omega_1$ and $\Omega_2$ consist of three collinear points.
\end{lemma}
\begin{proof} Assume on the contrary that $\Omega_1$ is a triangle. Take $g\in S$ such that $g$ fixes each vertex of $\Omega_1.$ Since $g$ is a projectivity of $PG(3,\mathbb{K})$ it fixes $\Pi_2$ pointwise. As $\Pi_2$ also contains  $\Omega_2,$ $g$ must fix $\Omega_2$ pointwise. But this is impossible as $S$ acts on $\cX$ faithfully.
\end{proof}
As a corollary, $I(R,\cX\cap \Pi_2)=1$ for every point $R\in\Omega_1\cup \Omega_2$.  For $i=1,2$, let $r_i$ denote the line containing $\Omega_i$. Their common point is fixed by $S$, and may be chosen for $P_0$. Let $M$ be the subgroup of $S$ which preserves every line through $P_0$. Since $\deg\, \cX=6$, no line meets $\cX$ in more than six distinct points. Therefore, either $|M|=1$ or $|M|=3$. In the latter case, $M$ is an elation group of order $3$ with center $P_0$. If $\Delta$ is its axis then
every point in $\Delta\cap \cX$ is fixed by $M$. Therefore $\Delta$ is not $\Pi_2$ and contains either $r_1$ or $r_2$. Since $S$ is abelian, it  preserves $\Delta$ and hence every plane through $r_1$. But then every $S$-orbit has length at most $3$. A contradiction with Proposition \ref{propap=3}. Hence $M$ is trivial.

Since $P_0\not \in \cX$, the linear system $\Sigma$ of all planes through $P_0$ cuts out on $\cX$ a linear series without fixed point. Therefore this effective linear series has dimension $2$ and degree $6$, and is denoted by $g_2^6$.

It might happen that $g_2^6$ is composed of an involution, and we investigate such a possibility. From \cite[Section 7.4]{hirschfeld-korchmaros-torres2008}, there is a curve ${\cZ}$ whose function field $\mathbb{K}({\cX})$ is an $S$-invariant proper subfield of $\mathbb{K}(\cX)$. Since no non-trivial element in $S$ fixes every line through $P_0$ and hence every plane through $P_0$, $S$ acts on ${\cZ}$ faithfully.  As the genus of $\cZ $ is less than $4$,  applying \eqref{naka16feb2013} to $\cZ$ gives   $\gamma(\cZ)=0$. Therefore, every non-trivial element in $S$ has a unique fixed point $\bar{T}$, see  \cite[Lemma 11.129]{hirschfeld-korchmaros-torres2008}.
{}From this, the support of the divisor of $K(\cX)$ lying over $\bar{T}$ contains the points in $\Omega_1\cup \Omega_2$. Therefore, the line through $P_0$ and a point in $\Omega_1\cup \Omega_2$ must contain all the points in $\Omega_1\cup \Omega_2$. But this would imply that $r_1=r_2$, a contradiction.

Therefore, $g_2^6$ is simple and without fixed point. The projection of $\cX$ from $P_0$ is an irreducible plane curve $\cC$ of degree $6$ and genus $4$ with two triple points $R_1$ and $R_2$ arising from $\Omega_1$ and $\Omega_2$, respectively. Here $\cC$ and $\cX$ are birationally equivalent, and $S$ is a subgroup of $PGL(3,\mathbb{K})$ preserving $\cC$.  For $i=1,2$, a non-trivial projectivity $s_i\in S$ fixing $\Omega_i$ pointwise acts on $\cC$ fixing the point $R_i$.

Choose the projective coordinate system $(X_0:X_1:X_2)$ in $PG(2,\mathbb{K})$ so that $R_1=(0:0:1)$ and $R_2=(0:1:0)$. In affine coordinates $(X,Y)$ with $X=X_1/X_0,\,Y=X_2/X_0$, an equation of $\cC$ is  $f=0$ with an irreducible polynomial $f\in \mathbb{K}[X,Y]$ of degree six. W.l.o.g. the origin $O=(0,0)$ is the common point of two tangents to $\cC$, say $t_1$ at $R_1$ and $t_2$ at $R_2$. Furthermore,  $s_1(O)=(\lambda,0)$, $s_2(O)=(0,\mu)$ with $\lambda,\mu\in \mathbb{K}^*$, and $\lambda=\mu=1$ may be assumed.
Thus $s_1:\,(X,Y)\mapsto (X+1,Y)$ and $s_2:\,(X,Y)\mapsto (X,Y+1)$. Hence
\begin{equation}
\label{eq7feb2013} f(X+1,Y)=f(X,Y),\quad f(X,Y+1)=f(X,Y).
\end{equation}
Since $R_1$ is a triple point of $\cC$, there exist $h_0,h_1,h_2,h_3\in \mathbb{K}[Y]$ such that $$f(X,Y)=h_3X^3+ h_2X^2+h_1X+h_0=0,$$
where $\deg\, h_0\leq 2$ by the particular choice of $t_2$.
{}From this and (\ref{eq7feb2013}), the polynomial
\begin{equation}
\label{eq7febb2013}
f(X+1,Y)-f(X,Y)=h_3-h_2X+h_2+h_1
\end{equation}
 vanishes at every affine point of $\cC$. Since $\cC$ is not rational, this is only possible when (\ref{eq7febb2013}) is the zero polynomial, that is,   $h_2=
h_3+h_1=0.$ Thus $f(X,Y)=h_3(X^3-X)+h_0$. The second mixed partial derivate is $f_{X,Y}=-dh_3/dY$. Similarly, as $R_2$ is a triple point of $\cC$ there exist $k_0,k_3\in \mathbb{K}[X]$ with $\deg\,k_0\leq 2$ such that $f(Y,X)=k_3(Y^3-Y)+k_0$. Since $f_{X,Y}=f_{Y,X}$, this yields $dh_3/dY=dk_3/dX$,  whence $dh_3/dY$ and $dk_3/dX$  both have degree $0$. Thus
$$h_3=c_3Y^3+c_1Y+c_0, \quad   k_3=d_3X^3+d_1X+d_0$$
where $c_0,c_1,c_3,d_0,d_1,d_3\in \mathbb{K}.$ Therefore
$$(c_3Y^3+c_1Y+c_0)(X^3-X)+h_0=(d_3X^3+d_1X+d_0)(Y^3-Y)+k_0.$$
Comparison of the coefficients of $X^3$  shows that $c_3=-c_1,c_0=0.$ Similarly, $d_3=-d_1,d_0=0.$ Thus
$$f(X,Y)=c(X^3-X)(Y^3-Y)+h_0=d(Y^3-Y)(X^3-X)+k_0$$
where $c,d\in \mathbb{K}.$ From this
$(c-d)(X^3-X)(Y^3-Y)=k_0-h_0$ whence $c=d$ and $k_0=h_0=u$ with $u\in \mathbb{K}^*$. Therefore
$$f(X,Y)=(X^3-X)(Y^3-Y)+c=0$$
where $c\in \mathbb{K}^*$.
\end{proof}
\subsection{Case $|S|=27$}
\label{S27}
In this case, the maximal subgroups of $S$ are elementary abelian groups of order $9$ and  Theorem \ref{14mag2014a} applies. Therefore, the Nakajima extremal curves of
genus $10$ are the curves $\cX_c$ as given in Proposition \ref{14may2014d}. A different presentation of the function field $\mathbb{K}(\cX_c)$ of $\cX_c$ is $\mathbb{K}(\cX_c)=\mathbb{K}(u,v,y,x)$ where
\begin{itemize}
\item[\rm(i)] $u(v^3-v)+u^2-c=0$;
\item[\rm(ii)] $y^3-y-u=0$;
\item[\rm(iii)] $(z^3-z)(v^3+1)+v^3-v^2-u=0.$
\end{itemize}
Here, both $\mathbb{K}(u,v,y)$ and $\mathbb{K}(u,v,z)$ are unramified degree $p$ Galois-extensions of $\mathbb{K}(u,v)$, and $\mathbb{K}(\cX_c)$ can be obtained as the special case $p=3,N=1$ of the construction given in Section \ref{inffam}.
 \subsection{Case $|S|=81$}
\begin{lemma}
\label{prop20bapr2013}
For $|S|=81$ there are only two possibilities for $S$, namely
\begin{itemize}
\item[{\rm{(a)}}] $S\cong S(81,7)$ where $S(81,7)=C_3 \wr C_3$ is the Sylow $3$-subgroup  of the symmetric group of degree $9$, moreover $M_1\cong C_3\times C_3 \times C_3$, $M_2\cong UT(3,3)$, $M_3\cong M_4\cong C_9 \rtimes C_3$.
\item[{\rm{(b)}}] $S\cong S(81,9)=\langle a,b,c |a^9=b^3=c^3=1,ab=ba,cac^{-1}=ab^{-1},cbc^{-1}=a^3b\rangle$ with exactly $62$ elements of order $3$; moreover $M_1\cong M_2\cong M_3 \cong UT(3,3)$, $M_4\cong C_9\times C_3$.
\end{itemize}
\end{lemma}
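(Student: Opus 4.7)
The plan is to reduce the problem to a finite check inside the classification of groups of order $3^4$. By the general results of the previous sections, $S$ has very rigid structure: from Propositions~\ref{propbp=3}, \ref{d4set2013}, \ref{30ag2013}, \ref{31ago2013} and \ref{aprop15may2014}, $S$ is a $2$-generated, non-abelian, non-metacyclic $3$-group of order $81$, whose Frattini subgroup $\Phi(S)=S'$ is elementary abelian of order $9$ (since $\Phi(S)\subseteq M_1$ and $M_1$ has exponent $3$); each $M_i$ is non-cyclic; $M_1,M_2$ have exponent $3$ and hence are isomorphic either to $C_3\times C_3\times C_3$ or to $UT(3,3)$; and at most one of the four $M_i$ is abelian, by Proposition~\ref{proA18feb2014}.

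The next step is the element count built into Proposition~\ref{aprop15may2014}: if $k\in\{0,1,2\}$ denotes the number of $M_i$ with $i\in\{3,4\}$ of exponent $9$, then $S$ contains exactly $(4-k)\cdot 18+8$ elements of order $3$, that is $80$, $62$ or $44$. The case $k=0$ would force $S$ to have exponent $3$, but every $2$-generated group of exponent $3$ is a quotient of the restricted Burnside group $B(2,3)$, which has order $27$; this contradicts $|S|=81$, so $k\in\{1,2\}$.

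In each of the two remaining cases I would isolate the isomorphism type of $S$ by using the classification of groups of order $3^4$. The conditions collected above are very restrictive: among the fifteen isomorphism types of groups of order $81$, the $2$-generated non-metacyclic ones form a short list, and combining the restrictions on $\Phi(S)$, on the exponents of the $M_i$, and on the number of abelian $M_i$ leaves, respectively, $S(81,7)=C_3\wr C_3$ in the case $k=2$ and $S(81,9)$ in the case $k=1$. For each of the two groups a direct inspection of the defining relations identifies the four maximal subgroups and yields the claimed isomorphism types, in particular the presentation for $S(81,9)$ displayed in statement (b).

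The main obstacle is this last identification step: verifying that the numerical and structural constraints above really force $S$ to be one of exactly two groups from the classification, and then pinning down the isomorphism type of each $M_i$ in both cases. While these checks can in principle be carried out by hand, they are most transparent when done against the GAP \emph{SmallGroups} library, which confirms at once that $S(81,7)$ and $S(81,9)$ are the only $2$-generated, non-metacyclic groups of order $81$ whose maximal subgroup structure matches the blueprints required in (a) and (b).
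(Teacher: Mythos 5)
Your proposal is correct in substance, but it reaches the two groups by a slightly different route than the paper, and it stops just short of closing the argument. The paper first forces $S$ to have maximal nilpotency class: every group of order $p^4$ has an abelian maximal subgroup, and the result of Xu--An--Zhang quoted in the proof of Proposition~\ref{proA18feb2014} (abelian maximal subgroup plus $[S:S']=p^2$) then gives class $3$. This cuts the classification down to the four maximal-class groups $S(81,7),\dots,S(81,10)$, after which $S(81,8)$ and $S(81,10)$ are eliminated because $M_1$ and $M_2$ are two \emph{distinct} maximal subgroups each of which must contain an element of order $3$ outside $\Phi(S)$ --- impossible when all order-$3$ elements lie in a single maximal subgroup, respectively in $\Phi(S)$. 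You instead work from the element count of Proposition~\ref{aprop15may2014} (giving $80$, $62$ or $44$ elements of order $3$) together with the exponent constraints on the $M_i$, and your exclusion of the value $80$ via the Burnside group $B(2,3)$ of order $27$ is a clean alternative to anything in the paper. The one soft spot is that you assert, rather than verify, that the remaining constraints single out $S(81,7)$ and $S(81,9)$, deferring the check to GAP. This can be closed with no computation: your own hypotheses already force maximal class, since a $2$-generated group of class $2$ has cyclic commutator subgroup $\langle[a,b]\rangle$, whereas $S'=\Phi(S)$ is elementary abelian of order $9$ by Proposition~\ref{30ag2013} and Proposition~\ref{aprop15may2014}; so you too are reduced to the same four maximal-class groups, whose order-$3$ element counts are $44$, $26$, $62$ and $8$, and only the first and third match your admissible values $44$ and $62$. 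Making that observation explicit turns your ``direct inspection'' into a four-line table and puts your argument on the same footing as the paper's, with the added benefit that the case distinction $k=1$ versus $k=2$ labels the two outcomes (a) and (b) directly.
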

\begin{proof} There exist exactly seven groups of order $81$ generated by two elements, namely $S(81,i)$ with $i=1,\ldots,7$, and each of them has an abelian normal subgroup of index $3$. By Proposition \ref{a4set2013}, $S$ is of maximal class. There are four pairwise non-isomorphic groups of order $81$ and maximal class, namely (a), (b) and
\begin{itemize}
\item[(c)] $S(81,8)\cong\langle a,b,c |a^9=b^3=c^3=1,ab=ba,cac^{-1}=ab,cbc^{-1}=a^3b\rangle$ with $26$ elements of order $3$;
\item[(d)] $S(81,10)\cong (C_9\rtimes C_3)\rtimes C_3$  with  $8$ elements of order $3$.
 \end{itemize}
One of the four maximal normal subgroups of $S(81,8)$ is isomorphic to $U(3,3)$ and hence it contains all elements of order $3$. On the other hand, (iv) of Proposition \ref{30ag2013} yields that
two of the maximal normal subgroups of $S$, namely $M_1$ and $M_2$, have non-trivial $1$-point stabilizer in $\Omega_1$ and $\Omega_2$, respectively. Hence, both must have an element of order $3$ not contained in $\Phi(S)$. Since $M_1\cap M_2=\Phi(S)$, these elements are not in the same maximal normal subgroup. This contradiction shows that (c) cannot actually occur in our situation. Regarding $S(81,10)$, all elements of order $3$ lie in $\Phi(S)$ as $\Phi(S)$ is an elementary abelian group of order $9$. But this is impossible in our situation since $M_1$ must have an element of order $3$ not in $\Phi(S)$   by Propositions \ref{30ag2013} and \ref{b30ago2013}.
\end{proof}
We point out that both cases in Lemma \ref{prop20bapr2013} occur. The curve $\cX$ with function field $\mathbb{K}(x,y,u,s,w)$ defined by the equations
\label{1jun2014e}
\begin{itemize}
\item[\rm(i)] $x(y^3-y)-x^2-1=0$;
\item[\rm(ii)] $u^3-u-x=0$;
\item[\rm(iii)] $(u-y)(w^3-w)-1=0$;
\item[\rm(iv)] $(u-(y+1))(s^3-s)-1=0$.
\end{itemize}
has genus $\gg(\cX)=28$ and it has a $\mathbb{K}$-automorphism group $S\cong S(81,7)$ generated by $g_1,g_2,g_3,g_4,g_5$ where
$$
\begin{array}{llll}
&g_1:\,(x,y,u,w,s)\mapsto (x,y+1,u,s,u-w-s), &  g_2:\,(x,y,u,w,s)\mapsto (x,y+1,u,s,u-w-s),  \\
&g_3:\,(x,y,u,w,s)\mapsto (x,y+1,u+1,w,s), &  g_4:\,(x,y,u,w,s)\mapsto(x,y,u,w+1,s), \\
&g_5:\,(x,y,u,w,s)\mapsto(x,y,u,w,s+1). &  {}
\end{array}
$$
To show an example for the other case, we apply Theorem \ref{31mag2014} for $N=2$ and obtain a a Nakajima extremal curve of genus $82$ with a $\mathbb{K}$-automorphism group $S$
such that
\begin{itemize}
\item[\rm(i)] $S$ is isomorphic to the unique group $S(243,26)$ of order $243$ with $170$ elements of order $3$, moreover $M_2\cong M_3\cong M_4\cong S(81,9)$, and $M_1\cong C_9\times C_9$.
\end{itemize}
Since $|Z(S)|=3$, Proposition \ref{a30ag2013} applied to $N=Z(S)$ yields the existence of a Nakajima extremal curve of genus $28$ with a $\mathbb{K}$-automorphism group isomorphic to $S/Z(S)$.
Here $S/Z(S)\cong S(81,10)$ and therefore this curve provides an example for Case (b).
\subsection{Case $|S|=243,729$}
\begin{proposition}
\label{b1sep2013} If
 $|S|=243$ and $S$ has a maximal abelian subgroup, then there are only two possibilities for $S$, namely $\rm{(i)}$ and
\begin{itemize}
\item[\rm(ii)] $S$ is isomorphic to the unique group $S(243,28)$ of order $243$ with $116$ elements of order $3$, moreover $M_1\cong M_2\cong S(81,9)$ while $M_3\cong S(81,4)$, and $M_4\cong S(81,10)$.
\end{itemize}
\end{proposition}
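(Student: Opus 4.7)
The strategy is to reduce to the already-classified case $|S|=81$ (Lemma~\ref{prop20bapr2013}) by passing to the quotient by $Z(S)$, and then to enumerate the very few groups of order $243$ compatible with that quotient.

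First, since $|S|=243>p^2$, Proposition~\ref{propbp=3} rules out that $S$ is abelian. The hypothesis that $S$ has a (necessarily normal) abelian maximal subgroup, combined with parts (i) and (ii) of Proposition~\ref{30ag2013} (namely $\Phi(S)=S'$ and $[S:\Phi(S)]=p^2$), places $S$ exactly in the scope of Xu's theorem~\cite[Theorem 2.5]{xu2008}, the same mechanism already used in the proof of Proposition~\ref{proA18feb2014}. Consequently $S$ has maximal nilpotency class $n-1=4$, and in particular $|Z(S)|=p=3$.

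Second, I would apply Proposition~\ref{a30ag2013} with $N=Z(S)$: the quotient $\bar{\cX}=\cX/Z(S)$ is again a Nakajima extremal curve, and $\bar{S}=S/Z(S)$, of order $81$, is a Sylow $p$-subgroup of $\aut(\bar{\cX})$. By Lemma~\ref{prop20bapr2013}, $\bar{S}$ must be isomorphic to either $S(81,7)\cong C_3\wr C_3$ or $S(81,9)$.

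The core step is then an enumeration. Among groups $S$ of order $3^5$ and maximal class whose central quotient lies in $\{S(81,7),S(81,9)\}$, I would retain only those admitting an abelian maximal subgroup whose four maximal (normal) subgroups $M_1,\dots,M_4$ are compatible with the structural constraints already established in Section~\ref{princip}: none of them is cyclic (Proposition~\ref{31ago2013}); exactly two are semiregular while the other two contain point-stabilizers (Proposition~\ref{30ag2013}(iv)); and each semiregular $M_i$ admits a generating set of size at most $p-1=2$ (Propositions~\ref{pro18feb2014} and~\ref{shafart}(i)). Running through Blackburn's classification of $3$-groups of maximal class of order $243$ (equivalently, the relevant entries of the GAP \textsf{SmallGroups} library) singles out exactly two isomorphism types: $S(243,26)$, which was already realized in Section~\ref{inffam} via Theorem~\ref{31mag2014} with $N=2$ and gives case (i), and $S(243,28)$, which gives case (ii). For each, the isomorphism types of $M_1,\dots,M_4$ can be read directly off the group data and matched to those listed in (i) and (ii); the labelling of which two of them play the role of $M_1,M_2$ (non-semiregular, each containing the stabilizer of a point of $\Omega_1$ or $\Omega_2$) versus $M_3,M_4$ (semiregular) is pinned down up to the symmetry $\Omega_1\leftrightarrow\Omega_2$.

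The main obstacle is the enumeration: several groups of order $243$ have maximal class, and one must verify that each of the remaining candidates violates one of the constraints above (no abelian maximal subgroup, or a semiregular $M_i$ requiring at least three generators, or an incompatible central quotient). What makes the argument tractable is the strong restriction $S/Z(S)\in\{S(81,7),S(81,9)\}$ from Step~2, which cuts the list down to a handful of presentations that can be checked by direct inspection.
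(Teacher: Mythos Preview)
Your approach is correct but takes a different route from the paper. The paper does not pass to the quotient by $Z(S)$ at all; instead, after invoking Proposition~\ref{a4set2013} to get maximal class (exactly as you do), it directly lists the six maximal-class groups of order $243$, namely $S(243,25),\ldots,S(243,30)$, and eliminates four of them by the same element-of-order-$3$ counting used in the proof of Lemma~\ref{prop20bapr2013}: for $S(243,25)$ and $S(243,30)$ a single maximal subgroup (isomorphic to $S(81,8)$) contains every element of order~$3$, contradicting the fact that $M_1$ and $M_2$ must each contain such an element outside the other; for $S(243,27)$ and $S(243,29)$ all elements of order~$3$ lie in $\Phi(S)$, contradicting the fact that $M_1$ must contain an order-$3$ stabilizer outside $\Phi(S)$.

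Your detour through $\bar{S}=S/Z(S)$ and Lemma~\ref{prop20bapr2013} is legitimate and has the pleasant feature of recycling the $|S|=81$ classification rather than rerunning its arguments, but it introduces an extra layer (matching central quotients) and you then pile on several constraints---generator bounds on semiregular $M_i$, non-cyclicity, etc.---of which most are not actually needed once you are down to the six maximal-class groups. The paper's argument is shorter and entirely parallel to what was already done for $|S|=81$: it simply counts elements of order~$3$ in each candidate and checks whether they can be distributed between $M_1$ and $M_2$ as Proposition~\ref{30ag2013}(iv) demands.
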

\begin{proof} There exist exactly six pairwise non-isomorphic groups of order $81$ and maximal class, namely (i), (ii) and $S(243,25)$ with $62$ elements of order $3$; $S(243,27)$ with $8$ elements of order $3$; $S(243,29)$ with $8$ elements of order $3$; $S(243,30)$ with $62$ elements of order $3$.

One of the four maximal normal subgroups of $S(243,28)$ (and of $S(243,30)$) is isomorphic to $S(81,8)$ and hence it contains all elements of order $3$. The argument in the proof of Proposition \ref{prop20bapr2013} ruling out possibility (c) also works in this case. Therefore, neither $S\cong S(243,25)$ nor $S\cong S(243,28)$ is possible. Regarding $S(243,27)$ and $S(243,29)$, we may use the argument from the proof of Proposition \ref{prop20bapr2013} that ruled out possibility (d). Therefore, $S\cong S(243,25)$ and $S\cong S(243,28)$ cannot occur in our situation.
\end{proof}
Theorem \ref{20luglio2014} applied to $p=3,\,N=1$ provides a Nakajima extremal curve $\cX$ of genus $\gg=244$ and $|S|=729$ so that $\Phi(S)$ is the direct product of two cyclic groups of order $9$. Using this and some other properties of $S$ established before and relying on the database of GAP, it is possible to prove that $S=S(729,34)$. Therefore, $S$ has nilpotency class $4$ and $|Z(S)|=3$. Moreover, $|\aut(\Phi(S))|=2^9\cdot3^5\cdot5\cdot 11$ which is equal to $(3^4-1)(3^4-3)(3^4-3^2)(3^4-3^3)$. Since $d(\Phi(S))=4$, this shows that $\Phi(S)$ hits the Burnside-Hall bound (\ref{eq25agostoA}) and hence $\cX$ is the unique Nakajima extremal curve of genus $\gg=244$ with $S=S(729,34)$. The quotient curve $\bar{\cX}=\cX/Z(S)$ is a Nakajima extremal curve of genus $\gg=82$ and its $\K$-automorphism group $\bar{S}=S/Z(S)$ is $S(243,3)$. In particular, $\bar{S}$ has nilpotency class $3$ and $Z(\bar{S})=9$. Moreover, $Z(\bar{S})$ contains two subgroups, say $\bar{T}_1$ and $\bar{T}_2$, of order $3$ so that the arising quotient curves $\bar{\cX}/\bar{T}_1$ and $\bar{\cX}/\bar{T}_2$ are non-isomorphic Nakajima extremal curves of genus $28$. Therefore, they are the curves given in  Lemma \ref{prop20bapr2013}.

\vspace{0,5cm}\noindent {\em Authors' addresses}:

\vspace{0.2 cm} \noindent Massimo GIULIETTI \\
Dipartimento di Matematica e Informatica
\\ Universit\`a degli Studi di Perugia \\ Via Vanvitelli, 1 \\
06123 Perugia
(Italy).\\
 E--mail: {\tt giuliet@dipmat.unipg.it}

\vspace{0.2cm}\noindent G\'abor KORCHM\'AROS\\ Dipartimento di
Matematica\\ Universit\`a della Basilicata\\ Contrada Macchia
Romana\\ 85100 Potenza (Italy).\\E--mail: {\tt
gabor.korchmaros@unibas.it }

    \end{document}